\newif\iffull
\NewDocumentCommand{\raisedminus}{m}{%
	\raisebox{0.1em}{$\m@th#1{-}$}%
}
\NewDocumentCommand{\unaryminus}{}{%
	\mathbin{%
		\mathchoice{%
			\raisedminus\scriptstyle
		}{%
			\raisedminus\scriptstyle
		}{%
			\raisedminus\scriptscriptstyle
		}{%
			\raisedminus\scriptscriptstyle
		}%
	}%
}
\newcommand\footnoteref[1]{\protected@xdef\@thefnmark{\ref{#1}}\@footnotemark}
\newcommand{\OLDcompproblem}[3][]{%
	\par\vspace{0.125cm plus 0.1cm minus 0.05cm}\begin{tabularx}{\linewidth-2\parindent}{lX}%
		\if\relax\detokenize{#1}\relax%
		\else%
		\textnormal{\textsf{Constant:}}&#1\\%
		\fi%
		\textnormal{\textsf{Input:}}&#2\\%
		\textnormal{$\mathrlap{\textsf{Output:}}\hphantom{\textsf{Question:}}$}&#3\\%
	\end{tabularx}\vspace{0.125cm plus 0.1cm minus 0.05cm}\par%
}
\newcommand{\OLDproblem}[3][]{%
	\par\vspace{0.125cm plus 0.1cm minus 0.05cm}\begin{tabularx}{\linewidth-2\parindent}{lX}%
		\if\relax\detokenize{#1}\relax%
		\else%
		\textnormal{\textsf{Constant:}}&#1\\%
		\fi%
		\textnormal{\textsf{Input:}}&#2\\%
		\textnormal{\textsf{Question:}}&#3\\%
	\end{tabularx}\vspace{0.125cm plus 0.1cm minus 0.05cm}\par%
}
\newcommand{\compproblem}[3][]{%
	\par\vspace{0.125cm plus 0.1cm minus 0.05cm}\adjustbox{valign=t}{\begin{tabularx}{\linewidth-2\parindent}{@{}lX}%
			\if\relax\detokenize{#1}\relax%
			\else%
			\textnormal{\textsf{Constant:}}&#1\\%
			\fi%
			\textnormal{\textsf{Input:}}&#2\\%
			\textnormal{$\mathrlap{\textsf{Output:}}\hphantom{\textsf{Question:}}$\!\!}&#3\\%
	\end{tabularx}}\vspace{0.125cm plus 0.1cm minus 0.05cm}\par%
}
\newcommand{\ynproblem}[3][]{%
	\par\vspace{0.125cm plus 0.1cm minus 0.05cm}\adjustbox{valign=t}{\begin{tabularx}{\linewidth-2\parindent}{@{}lX}%
			\if\relax\detokenize{#1}\relax%
			\else%
			\textnormal{\textsf{Constant:}}&#1\\%
			\fi%
			\textnormal{\textsf{Input:}}&#2\\%
			\textnormal{\textsf{Question:}}&#3\\%
	\end{tabularx}}\vspace{0.125cm plus 0.1cm minus 0.05cm}\par%
}
\newcommand{\compproblem}[3][]{%
	\par\vspace{0.1cm plus 0.1cm minus 0.05cm}\adjustbox{valign=t}{\begin{tabularx}{\linewidth-1.5\parindent}{@{}lX}%
			\if\relax\detokenize{#1}\relax%
			\else%
			\textnormal{\textsf{Constant:}}&#1\\%
			\fi%
			\textnormal{\textsf{Input:}}&#2\\%
			\textnormal{$\mathrlap{\textsf{Output:}}\hphantom{\textsf{Question:}}$\!\!\!}&#3\\%
	\end{tabularx}}\vspace{0.1cm plus 0.1cm minus 0.05cm}\par%
}
\newcommand{\multicompproblem}[9][]{%
	\par\vspace{0.1cm plus 0.1cm minus 0.05cm}\adjustbox{valign=t}{\begin{tabularx}{\linewidth-1.5\parindent}{@{}lX}%
			\if\relax\detokenize{#1}\relax%
			\else%
			\textnormal{\textsf{Constant:}}&#1\\%
			\fi%
			\textnormal{\textsf{Input:}}&#2\\%
			\textnormal{$\mathrlap{\textsf{Output A:}}\hphantom{\textsf{Question:}}$\!}&#3\\%
			\textnormal{$\mathrlap{\textsf{Output B:}}\hphantom{\textsf{Question:}}$\!}&#4\\%
			\textnormal{$\mathrlap{\textsf{Output C:}}\hphantom{\textsf{Question:}}$\!}&#5\\%
			\textnormal{$\mathrlap{\textsf{Output D:}}\hphantom{\textsf{Question:}}$\!}&#6\\%
			\textnormal{$\mathrlap{\textsf{Output E:}}\hphantom{\textsf{Question:}}$\!}&#7\\%
			\textnormal{$\mathrlap{\textsf{Output F:}}\hphantom{\textsf{Question:}}$\!}&#8\\%
			\textnormal{$\mathrlap{\textsf{Question G:}}\hphantom{\textsf{Question:  }}$\!}&#9\\%
			\end{tabularx}}\vspace{0.1cm plus 0.1cm minus 0.05cm}\par%
}
\newcommand{\ynproblem}[3][]{%
	\par\vspace{0.1cm plus 0.1cm minus 0.05cm}\adjustbox{valign=t}{\begin{tabularx}{\linewidth-1.5\parindent}{@{}lX}%
			\if\relax\detokenize{#1}\relax%
			\else%
			\textnormal{\textsf{Constant:}}&#1\\%
			\fi%
			\textnormal{\textsf{Input:}}&#2\\%
			\textnormal{\textsf{Question:}}&#3\\%
	\end{tabularx}}\vspace{0.1cm plus 0.1cm minus 0.05cm}\par%
}
\newenvironment{cm}{\noindent\color{blue}}{} 
\newcommand{\taligned}{$t$-aligned\xspace}
\newcommand{\Taligned}{$T$-aligned\xspace}
\newcommand{\wal}{\tilde{w}}
\newcommand{\wcl}[2]{w_{[#1,#2]}}
\newcommand{\wopi}[2]{w_{(#1,#2]}}
\newcommand{\wopij}[2]{w_{(#1,#2)}}
\newcommand{\kcl}[2]{k_{[#1,#2]}}
\newcommand{\height}[2]{h_{#2}(#1)}
\newcommand{\hght}[1]{h({#1})}
\newcommand{\mn}[2]{m_{#1#2}}
\newcommand{\mx}[2]{M_{#1#2}}
\newenvironment{aw}{\noindent\color{magenta} AW :  }{}
\newcommand{\Delpr}{\Delta^*}
\newcommand{\Thepr}{\Theta^*}
\newcommand{\simBG}{\sim_{\BGpr}}
\newcommand{\len}[2]{\ell_{#1}(#2)}
\newcommand{\base}{q}
\newcommand{\binM}[2]{\operatorname{digit}_{#2}(#1)}
\newcommand{\cQsdr}{compact base-$\base$ signed-digit representation\xspace}
\newcommand{\sdr}{signed-digit representation\xspace}
 \newcommand{\stepone}{\proc{UpdateNodes}}
\newcommand{\steptwo}{\proc{ExtendChains}}
\newcommand{\addition}{\proc{Addition}}
\newcommand{\powertwo}{\proc{MultByPower}}
\newcommand{\FloatPoint}{\proc{MaxQPower}}
\newcommand{\comp}{\proc{Comparison}}
\newcommand{\OpLem}{\proc{operation lemmas}}
\newcommand{\sgn}[1]{\operatorname{sgn}(#1)}
\newcommand{\cy}{e}
\newcommand{\redpc}{red-PC rep.\xspace}
\newcommand{\Redpc}{Red-PC rep.\xspace}
\newcommand{\redpcs}{redPC's reps. }
\newcommand\up[2]{\operatorname{up}_{#1}(#2)}
\newcommand\low[2]{\operatorname{low}_{#1}(#2)}
\newcommand{\set}[2]{\left\{#1\mathrel{\left|\vphantom{#1}\vphantom{#2}\right.}#2\right\}}
\newcommand{\oneset}[1]{\left\{\mathinner{#1}\right\}}
\newcommand{\msumkt}[1]{\operatorname{S}_{K,T}(#1)}
\newcommand{\msumk}[1]{\operatorname{S}_{K}(#1)}
\newcommand{\abs}[1]{\left\lvert\mathinner{#1}\right\rvert}
\newcommand{\abssmall}[1]{\lvert\mathinner{#1}\rvert}
\newcommand{\floor}[1]{\left\lfloor\mathinner{#1} \right\rfloor}
\newcommand{\ceil}[1]{\left\lceil\mathinner{#1} \right\rceil}
\newcommand{\gen}[1]{\left< \mathinner{#1} \right>}
\newcommand{\Gen}[2]{\left< \mathinner{#1} \mid \mathinner{#2}\right>}
\newcommand{\genbig}[1]{\big< \mathinner{#1} \big>}
\newcommand{\N}{\ensuremath{\mathbb{N}}}
\newcommand{\Z}{\ensuremath{\mathbb{Z}}}
\newcommand{\Q}{\ensuremath{\mathbb{Q}}}
\newcommand{\R}{\ensuremath{\mathbb{R}}}
\newcommand{\TC}{\ensuremath{\mathsf{TC}^0}\xspace}
\newcommand{\NC}{\ensuremath{\mathsf{NC}}\xspace}
\renewcommand{\phi}{\varphi}
\newcommand{\eps}{\varepsilon}
\renewcommand{\epsilon}{\varepsilon}
\newcommand{\depth}{\operatorname{depth}}
\newcommand{\val}{\operatorname{val}_q}
\newcommand{\cor}{\operatorname{CR}_\base}
\newcommand{\e}{\eps} 
\newcommand{\mOne}{\xspace{\unaryminus\!1}}
\newcommand{\supp}{\sigma}
\newcommand{\del}{\delta}
\newcommand{\Sig}{\Sigma}
\newcommand{\Del}{\Delta}
\newcommand\GG{\Gamma}
\newcommand\LL{\Lambda}
\newcommand{\Oh}{\mathcal{O}}
\newcommand{\cA}{\mathcal{A}}
\newcommand{\cC}{\mathcal{C}}
\newcommand{\cU}{\mathcal{U}}
\newcommand{\cV}{\mathcal{V}}
\newcommand{\cW}{\mathcal{W}}
\newcommand{\cZ}{\mathcal{Z}}
\newcommand{\BS}[2]{\ensuremath{\mathrm{\mathbf{BS}}_{#1,#2}}}
\newcommand{\BG}{\ensuremath{\mathrm{\mathbf{G}}_{1,2}}\xspace} 
\newcommand{\BGq}{\ensuremath{\mathrm{\mathbf{G}}_{1,\pcBase}}\xspace} 
\newcommand{\BSq}{\ensuremath{\mathrm{\mathbf{BS}}_{1,\pcBase}}}
\newcommand{\inBSpr}{\in_{\text{\tiny $\mathbf{B}$}}}
\newcommand{\eqBSpr}{=_{\text{\tiny $\mathbf{B}$}}}
\newcommand{\BGpr}{\ensuremath{\mathrm{\mathbf{G}}_{p,r}}}
\newcommand{\BGpq}{\ensuremath{\mathrm{\mathbf{G}}_{p,pq}}}
\newcommand{\BSpq}{\ensuremath{\mathrm{\mathbf{BS}}_{p,pq}}}
\newcommand{\numba}[1]{\mathcal{A}(#1)}
\newcommand{\eqBGpr}{=_{\BGpr}}
\newcommand{\inBGpr}{\in_{\BGpr}}
\newcommand{\eqBGpq}{=_{\BGpq}}
\newcommand{\inBGpq}{\in_{\BGpq}}
\newcommand{\oi}[1]{{#1}^{-1}}
\renewcommand{\L}{\LOGSPACE}
\newcommand{\NP}{\ensuremath{\mathsf{NP}}\xspace} %
\newcommand{\coNP}{\ensuremath{\mathsf{coNP}}\xspace}
\newcommand{\LOGSPACE}{\ensuremath{\mathsf{LOGSPACE}}\xspace} %
\newcommand{\SAC}{\ensuremath{\mathsf{SAC}^1}\xspace} %
\newcommand{\Tc}[1]{\ensuremath{\mathsf{TC}^{#1}}\xspace}
\newcommand{\Ac}[1]{\ensuremath{\mathsf{AC}^{#1}}\xspace}
\newcommand{\Nc}[1]{\ensuremath{\mathsf{NC}^{#1}}\xspace}
\renewcommand{\P}{\ensuremath{\mathsf{P}}\xspace}
\newcommand{\uAC}[1]{\ensuremath{\mathsf{uAC}^{#1}}\xspace}
\newcommand{\uAc}[1]{\ensuremath{\mathsf{uAC}^{#1}}\xspace}
\newcommand{\uTC}[1]{\ensuremath{\mathsf{uTC}^{#1}}\xspace}
\newcommand{\uAcf}[1]{\ensuremath{\mathsf{uAC}^{#1}(F_2)}\xspace}
\newcommand{\PTc}[2]{\ensuremath{\mathsf{LinDepParaTC}^{#1}}\xspace}
\newcommand{\DLOGTIME}{\ensuremath{\mathsf{DLOGTIME}}\xspace} %
\newcommand\tow{\mathop \tau} 
\newcommand{\Breduced}{Britton-re\-du\-ced\xspace}
\newcommand{\PC}{power circuit\xspace}
\newcommand{\pcBase}{q}
\newcommand{\domQ}{\interval{-\pcBase+1}{\pcBase-1}}
\newcommand{\sdZ}{\ensuremath{\Z[1/\pcBase] \rtimes \Z}}
\newcommand\lds{,\ldots ,} 
\newcommand{\sse}{\subseteq}
\newcommand{\es}{\emptyset}
\newcommand{\interval}[2]{[ \mathinner{#1}..\mathinner{#2}] }
\newcommand\ei[1]{{\emph{#1}\xspace}\index{#1}} 
\newcommand{\proc}[1]{\ensuremath{\text{\textsc{#1}}}\xspace}
\newcommand{\gnot}{\proc{Not}}
\newcommand{\gor}{\proc{Or}}
\newcommand{\gmaj}{\proc{Majority}}
\newcommand{\gand}{\proc{And}}
\newcommand\ie{i.e., }
\newcommand\eg{e.g.\xspace}
\newcommand{\comparator}{\mathrel{\triangle}}
\newcommand{\compOpSet}{\oneset{ = , \neq, <,\leq ,>, \geq }}
\begin{document}

\title{Improved Parallel Algorithms for Baumslag Groups \thanks{This work has been partially supported by DFG Grant WE 6835/1-2.}}

\titlerunning{Improved Parallel Algorithms for Baumslag Groups}
%
\author{Caroline Mattes\inst{1} \and
	Armin Weiß\inst{1}\orcidID{0000-0002-7645-5867}}
\authorrunning{C. Mattes \and A. Weiß}
%
\institute{
	Universität Stuttgart, Institut für Formale Methoden der Informatik, Germany
}
\maketitle              
\begin{abstract}
	The Baumslag group had been a candidate for a group with an extremely difficult word problem until Myasnikov, Ushakov, and Won succeeded to show that its word problem can be solved in polynomial time. Their result used the newly developed data structure of power circuits allowing for a non-elementary compression of integers. Later this was extended in two directions: Laun showed that the same applies to the Baumslag groups $\BGq$ for $q \geq 2$ and we established that the word problem of the Baumslag group $\BG$ can be solved in \Tc2.
	
	In this work we refine the operations on reduced power circuits to further improve upon both results. We show that the word problem of the Baumslag groups $\BGpq$ with $\abs{p},\abs{q} \geq 1$ can be solved in $\uTC{1}$. 
	Moreover, we prove that the conjugacy problem in $\BGpq$ is strongly generically in $\uTC{1}$ (meaning that for ``most'' inputs it is in $\uTC{1}$). Finally, for every fixed $g \in \BGq$ (case $p=1$) conjugacy to $g$ can be decided in $\uTC{1}$ for \emph{all} inputs.
	
	We further show that the word problem of the Baumslag-Solitar groups $\BS{p}{pq}$ is in $\uAcf{0}$ if the input word is given in a quite compressed form and so give a complexity result for a special case of the power word problem for these groups.

	\keywords{Algorithmic group theory \and power circuit \and  word problem \and conjugacy problem \and Baumslag group \and Baumslag-Solitar groups \and parallel complexity.} 
\end{abstract} 
%

%

\section{Introduction}

In the early 20th century, Dehn \cite{dehn11} introduced the {\em word problem} as one of the basic algorithmic problems in group theory: given a word over the generators of a group $G$, the questions is whether this word represents the identity of $G$. 
Already in the 1950s, Novikov and Boone constructed finitely presented groups with an undecidable word problem \cite{boone59,nov55}. Still, many natural classes of groups have an (efficiently) decidable word problem~-- most prominently, the class of linear groups (groups embeddable into a matrix group over some field): their word problem is in \LOGSPACE \cite{lz77,Sim79}~-- in particular, in \NC, \ie decidable by Boolean circuits of polynomial size and polylogarithmic depth (or, equivalently decidable in polylogarithmic time using polynomially many processors). 
There are several other results on word problems of groups in small complexity classes defined by circuits, for example
for solvable linear groups in \TC (constant depth with majority gates) \cite{KonigL18poly}, for Baumslag-Solitar groups in \LOGSPACE \cite{Weiss16}, and for hyperbolic groups in $\SAC\sse \Nc2$ \cite{Lo05ijfcs}.
%
Nevertheless, there are also finitely presented groups with decidable, yet arbitrarily hard, word problems \cite{brs02}. 

A \emph{one-relator group} is a group that can be written as a free group modulo a normal subgroup generated by a single element (\emph{relator}). A famous algorithm called the \emph{Magnus breakdown procedure} \cite{mag32} shows that one-relator groups have decidable word problems (see also \cite{LS01,mks}).
Its complexity remains an open problem: while it is not even clear whether the word problems of one-relator groups are solvable in elementary time, \cite{BMS} asks for polynomial-time algorithms.

In 1969 Gilbert Baumslag defined the group
$\BG = \Gen{a,b}{bab^{-1} a = a^2bab^{-1}}$
as an example of a one-relator group enjoying certain remarkable properties. It is infinite and non-abelian, but all its finite quotients are cyclic and, thus, it is not residually finite
 \cite{baumslag69}. 

Moreover, Gersten showed that the Dehn function of $\BG$ is non-elementary \cite{gersten91} meaning that it cannot be bounded by any fixed tower of exponentials (see also \cite{plat04}). This made the Baumslag group a candidate for a group with a very difficult word problem. 
Indeed, when applying the Magnus breakdown procedure to an input word of length $n$, one obtains as intermediate results words of the form $v_1^{x_1} \cdots v_m^{x_m}$ where $v_i \in \{a,b, bab^{-1} \}$, $x_i \in \Z$, and $m \leq n$. The issue is that the $x_i$ might grow up to $\tau_2(\log n)$ (with $\tow_2(0) = 1$ and $\tow_2(i+1) = 2^{\tow_2(i)}$ for $i \geq 0$~-- the tower function). 
However, Myasnikov, Ushakov and Won succeeded to show that the word problem of \BG is, indeed, decidable in polynomial time \cite{muw11bg}. Their crucial contribution were so-called \emph{power circuits} in \cite{MyasnikovUW12} for compressing the $x_i$ in the above description.

Roughly speaking, a (base-$2$) \emph{power circuit} is a directed acyclic graph with edges labelled by numbers from $\{-1, 0, 1\}$. 
One defines an evaluation of a vertex $P$ as two raised to the power of the (weighted) sum of the successors of $P$. 
Hence, the value $\tow_2(n)$ can be represented by an $n + 1$-vertex power circuit~-- thus, power circuits allow for a non-elementary compression. The crucial feature for the application to the Baumslag group is that they not only efficiently support the operations $+$, $-$, and $(x,y) \mapsto x \cdot 2^y$, but also the test whether $x = y$ or $x < y$ for two integers represented by power circuits can be done in polynomial time. The main technical part of the comparison algorithm is to compute a so-called \emph{reduced} power circuit.

Based on these striking results, Diekert, Laun and Ushakov \cite{DiekertLU13ijacstacs} improved 
the running time for the word problem of the Baumslag group from $\Oh(n^7)$ down to $\Oh(n^3)$ and described a polynomial-time algorithm for the word problem of the Higman group $H_4$ \cite{higman51}.
Subsequently, more applications of power circuits to similar groups emerged: 
In \cite{Laun14} Laun gave a polynomial-time solution for the word problem of the Baumslag groups $\BGq = \Gen{a,b}{bab^{-1} a = a^\pcBase bab^{-1}}$ for $q \geq 1$ and also for generalized Higman groups. In order to do so, he generalized power circuits to arbitrary bases $\pcBase \geq 2$ and adapted the corresponding algorithms from \cite{MyasnikovUW12,DiekertLU13ijacstacs}. 
Finally, in \cite{MattesWCC23}, the authors of the present paper studied the word problem of the Baumslag group $\BG$ from the point of view of parallel complexity and showed that it can be solved in the circuit class \Tc2.


An interesting variant of the word problem is the power word problem:  the question is the same as for the word problem, but with the input given as $p_1^{x_1}\cdots p_n^{x_n}$ where $x_i \in \Z$ and the $p_i$ are words over the group generators \cite{LohreyZetsche23,LohreyWeiss19}. Note that the input is given in a quite compressed form here, so the power word problem might be harder to solve as it is the case e.g. for $S_5 \wr \Z$ under the hypothesis that $\L \neq \NP$. The word problem of this group is in \LOGSPACE, but its power word problem is \coNP-complete \cite{LohreyWeiss19}. On the other hand, there are groups such that the word problem and the power word problem are in the same complexity class:  for the Baumslag-Solitar groups $\BSq= \Gen{a,t}{tat^{-1} = a^q}$ it is shown that the power word problem \cite{LohreyZetsche23} as well as the word problem \cite{Robinson93} both are in the same (small parallel) complexity class $\TC$. 
\smallskip

Another one of Dehns fundamental problems is the conjugacy problem: Given elements $g,h \in G$ does there exist an element $z \in G$ such that $z^{-1}gz=h$? 
 In \cite{DiekertMW2016SI} the conjugacy problem of the Baumslag group is shown to be strongly generically in \P and in \cite{Baker20} the same is done for the conjugacy problem of the Higman group. Here ``generically'' roughly means that the algorithm works for most inputs~-- for a precise definition, see  \cref{sec:complexity} below. The idea is that often the ``generic-case behavior'' of an algorithm is more relevant than its average-case or worst-case behavior. We refer to \cite{KMSS1,KMSS2} where the foundations of this theory were developed  
 and to \cite{MyasnikovSU08} for applications in cryptography.

\vspace{-1mm}
\paragraph*{Contribution.}
In this work we consider the parallel complexity of the word problem of the  Baumslag group $\BGpq = \Gen{a,b}{bab^{-1} a^p = a^{pq} bab^{-1}}$ for $\abs{p},\abs{q}\geq 1$. Moreover, we extend the methods used in \cite{Weiss16} to consider a special case of the power word problem of the Baumslag-Solitar group $\BS{p}{pq}=\Gen{a,t}{ta^pt^{-1} = a^{pq}}$: if the input is given as a word over the powers of the \emph{generators}. This form of input previously has been studied in \cite{MyasnikovW22,gurevichs07} (though with binary exponents instead of exponents encoded by power circuits, as we do). Let $a,t$ be the generators of $\BS{p}{pq}$. We show the following:
\newtheorem{propositionA}{Proposition}
\renewcommand*{\thepropositionA}{\Alph{propositionA}}

\newtheorem{theoremA}[propositionA]{Theorem}
\begin{theoremA}\label{thm:WPinBS}
	For every $p,q \in \Z$ with $\abs{p},  \abs{q} \geq 1$ the word problem of $\BS{p}{pq}$ with the input word given over $a^k, t^x$  with $k,x \in \Z$ represented by markings in a reduced power circuit is in $\uAcf{0}$. 
\end{theoremA}
This theorem gives a partial answer to the question about the complexity of the power word problem in Baumslag-Solitar groups for $p>1$ raised in \cite[Section 6]{LohreyZetsche23}. In \cite{LohreyZetsche23} it is shown that the power word problem for the groups $\BSq$ is in $\Tc{0}$. 

Our main result is on the word problem  for the Baumslag groups $\BGpq$.\footnote{Be aware that in \cite{MattesW22} and the previous version of this work we called the groups $\BGpr$ ``generalized Baumslag groups''. However, \cite{muw11bg} already uses the term ``generalized Baumslag group'' for another kind of groups. Therefore, we omit this term here  and call the groups $\BGpr$ for $p,r \in \Z$  simply ``Baumslag groups'' and changed the title of this paper accordingly. }   Here we prove the following result:

\begin{theoremA}\label{thm:WPinBG}
	For every $p,q \in \Z$ with  $\abs{p}, \abs{q} \geq 1$ the word problem of the Baumslag group $\BGpq$ is in $\uTC{1}$.
\end{theoremA}

In \cite{muw11bg} it is stated that the word problem in $\BGpq$ is in \P using the same approach as for $\BG$, but no explicit proof is given. By proving Theorem~\ref{thm:WPinBG} we give such a proof since $\uTC{1} \subseteq \P$.

A crucial ingredient to prove Theorem~\ref{thm:WPinBG} and reduce  the complexity from \Tc2 (by \cite{MattesWCC23})  to  $\uTC{1}$ is that we succeed to make the operations on power circuits more efficient.
 We  perform all operations directly on reduced base-$\abs{q}$ power circuits.  We allow operations in a SIMD (single instruction multiple data) fashion: many operations of the same type are performed on the same power circuit in parallel in $\uTC{0}$.
Furthermore, we improve the algorithm to get power circuits of quasi-linear size~-- thus, close to the optimal size as in the sequential algorithms \cite{DiekertLU13ijacstacs,Laun14}.

In the last part of our paper, we consider the conjugacy problem for $\mathbf{G}_{p,pq}$. As in \cite{MattesW22} (for $\BGq$)  we use our results for the word problem to improve the complexity of the strongly generic algorithm from \cite{DiekertMW2016SI} by showing (for a precise definition of strongly generic, see \cref{sec:complexity}):

\begin{theoremA}\label{thm:main_conjugacy}
	For every $p,q \in \Z$ with  $\abs{p}, \abs{q}  \geq 1$ the conjugacy problem of the Baumslag group $\mathbf{G}_{p,pq}$ is strongly generically in $\uTC{1}$.

	Moreover, for \emph{every} fixed $g \in \BGq$ (if $p=1$), the problem to decide whether some input word $w$ is conjugate to $g$ is in $\uTC{1}$.

\end{theoremA}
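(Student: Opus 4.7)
The plan is to adapt the strongly generic polynomial-time algorithm of Diekert, Myasnikov and Weiß \cite{DiekertMW2016SI} for the conjugacy problem of the Baumslag group, replacing every sequential ingredient by a \Tc1-parallel counterpart via Theorem A. Recall that $\BGq$ is an HNN extension with base group $\BS(1,q) \cong \sdZ$, stable letter $b$, and associated infinite cyclic subgroups $\gen{a}$ and $\gen{bab^{-1}}$. Thus the standard conjugacy criterion for HNN extensions (Collins' lemma) applies: two cyclically \Breduced{} words are conjugate in $\BGq$ iff either both have $b$-length $0$ and are conjugate in the base group, or they have the same nonzero $b$-length and, up to a cyclic permutation of $b$-syllables, corresponding base-group pieces are related by conjugation in the associated cyclic subgroups.

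First I would implement the base-group machinery in \Tc1: elements of $\sdZ$ can be represented as pairs $(x,n)$ with $x \in \Z[1/\abs q]$ stored as a reduced base-$\abs{q}$ power circuit plus a sign and a $q$-adic denominator exponent, and $n \in \Z$. Using the refined reduced-\PC operations underlying Theorem A, the operations $+$, $-$, multiplication by $q$-powers, and equality test in $\sdZ$ are all in \Tc1; membership in $\gen{a}$ or $\gen{bab^{-1}}$ reduces to checking that the denominator is trivial and comparing, again in \Tc1. Next I would compute the cyclically \Breduced{} form of the input word $w$. Organizing the \Breduction{} in a tree-shaped fashion as in the proof of Theorem A (the Magnus-breakdown recursion of \cite{MattesW21}) yields $\Oh(\log n)$ rounds of \Breduction{}, each round being a parallel batch of base-group operations in \Tc1; the cyclic reduction is one additional round of the same type.

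Once both inputs $w_1, w_2$ are in cyclically \Breduced{} normal form $\beta_i = g_{i,0} b^{\e_{i,1}} g_{i,1} \cdots b^{\e_{i,m_i}} g_{i,m_i}$, Collins' lemma says that $w_1$ and $w_2$ are conjugate iff $m_1=m_2=:m$ and some cyclic rotation of the sequence $(\e_{2,j},g_{2,j})$ matches $(\e_{1,j},g_{1,j})$ modulo the associated-subgroup transformations. For the strongly generic part, I would take over verbatim the genericity estimate of \cite{DiekertMW2016SI}: on a set of inputs of asymptotic density $1$ the cyclic rotation is unique and the associated-subgroup checks collapse to a bounded batch of base-group equality queries, all performable in \Tc1. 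Thus on this generic set the whole algorithm runs in \Tc1. For the fixed-$g$ statement, the normal form $\hat g$ is a constant, so $m$ and the $b$-exponent pattern are fixed; only finitely many cyclic rotations of $\hat g$ need to be considered, and for each rotation the admissible conjugator is forced (up to a bounded set of choices coming from the two cyclic subgroups) by the first syllable, giving a \Tc1 verification for \emph{all} inputs $w$.

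The main obstacle is the parallelization of \Breduction{} to depth \Tc1. A naive sequential \Breduction{} may need $\Omega(n)$ passes; the tree-shaped scheme must be set up so that each level performs only a logarithmic-depth batch of reduced-\PC operations and so that the total blow-up of the underlying power circuit stays quasi-linear, which is exactly the improvement achieved for Theorem A. A secondary technical point is checking that the density estimates of \cite{DiekertMW2016SI} transfer to negative $q$ and to the generalized relator; this is a straightforward combinatorial adaptation since the HNN structure of $\BGq$ is the same as that of $\BG$, only the base group changes from $\BS(1,2)$ to $\BS(1,\abs q)$.
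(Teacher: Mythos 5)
Your treatment of the first half of the statement follows the paper's route (Corollary~\ref{cor:CPgeneric}): tree-shaped Britton reduction via Theorem~A, one further round for cyclic reduction (\cref{lem:cyclicallyReduce}), then Collins' Lemma applied as in \cite[Theorem~3]{DiekertMW2016SI}, where each of the finitely many case distinctions reduces to a constant number of word-problem instances on power-circuit input, all cyclic rotations being handled in parallel; genericity is inherited from \cite[Theorem~4]{DiekertMW2016SI} (the set of words not conjugate into $\BSq$ is strongly generic, and this transfers to arbitrary $\abs{q}\geq 2$). One small inaccuracy: you do not need the cyclic rotation to be generically unique --- the algorithm simply tests all $n$ rotations in parallel --- but that does not affect correctness.

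The genuine gap is in the fixed-$g$ part. Your argument ("the $b$-exponent pattern of $\hat g$ is fixed, only finitely many rotations, the conjugator is forced by the first syllable") only applies when $\hat g$ contains at least one $b$-syllable, i.e.\ when $g$ is \emph{not} conjugate into $\BSq$ --- but that case is already covered by the generic algorithm. The entire content of the second claim of Theorem~B is the complementary case $g=(r,m)\in\BSq$, which is precisely where the generic algorithm outputs ``unknown'' and where Collins' Lemma provides no syllable structure to exploit. There one must decide, for an input whose cyclic Britton reduction is some $(s,n)\in\BSq$ given by a compressed (power-circuit) representation, whether $(r,m)\sim_{\BGq}(s,n)$. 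The paper does this via the arithmetic characterization of \cref{prop:nixH}: for $m\neq 0$ and $(r,m)\not\sim_{\BSq}(0,m)$ one must test whether $r\cdot\pcBase^k\equiv s \bmod (\pcBase^m-1)$ for some $k\in\interval{0}{m-1}$, which requires computing residues of power-circuit--represented integers modulo the \emph{constant} $\pcBase^m-1$ (\cref{prop:modulo}); and in the sub-case reducing to $(s,n)\sim_{\BGq}(0,n)$ one needs $\eps(L)\bmod \pcBase^{\eps(K)}\cdot r$ for a non-constant, power-circuit--given modulus, which is \cref{lem:moduloPowerOfTwoConstant}. This is not a routine omission: division/modulo for arbitrary markings on power circuits causes a non-elementary blow-up (\cite[Theorem~1]{DiekertMW2016SI}), so the fact that these particular residues are computable in \TC because $g$ (hence $m$ and $r$) is fixed is the substantive new ingredient, and your proposal contains no replacement for it.
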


This work is  the full version of the conference publication \cite{MattesW22} where we considered the word problem of $\BGq$. 
Besides containing all proofs omitted in the conference version, the present paper extends the previously published results (also compared to the previous version of this work) in the following ways:
\begin{itemize}

	\item We added a completely new chapter ``The word problem for Baumslag-Solitar groups'' proving Theorem~\ref{thm:WPinBS} and establishing some lemmas in order to prove the extended version of Theorem~\ref{thm:WPinBG} (see the following point).
	
			\item 
	We extended our results from the group $\BGq$ to $\BGpq$ meaning that we obtain new results about the complexity of the word and conjugacy problem for the groups $\BGpq$ for $\abs{p} \geq 1$ (Theorem~\ref{thm:WPinBG} and~\ref{thm:main_conjugacy}).
	While the general outline of the proof and most of the machinery on power circuits remains the same as in \cite{MattesW22}, for this extension, several of the lemmas about the Baumslag group become considerably more complicated.

	\item Moreover, in addition to the conference version, we formulate and prove all our results for uniform circuit families and give some more details on this. 
\end{itemize}

\section{Notation and Preliminaries}

The logarithm $\log$ is with respect to base two, while $\log_{\pcBase}$ denotes the base-$\pcBase$ logarithm. Let $\pcBase \in \N$. Then the  base-$\pcBase$ \ei{tower function} $\tow_\pcBase\colon\N \to \N$
is defined by $\tow_\pcBase(0) = 1$ and $\tow_\pcBase(i+1) = \pcBase^{\tow_\pcBase(i)}$ for $i \geq 0$. 
It is primitive recursive, but already $\tow_2(6)$  written in binary cannot be stored in the memory of any conceivable real-world computer.
We denote the support of a function $f\colon X  \to \R$ by $\supp(f) = \set{x \in X}{f(x) \neq 0}$. Furthermore, the interval of integers $\oneset{i, \dots, j} \sse \Z$ is denoted by $\interval{i}{j}$.  
For $q,x \in \Z$, we write $q \nmid x$ if $q$ does not divide $x$. Moreover, $\sgn{x}$ denotes the sign of $x \in \Z$.

Let $\Sigma$ be a set. The set of words over $\Sigma$ is denoted by $\Sigma^* = \bigcup_{n\in \N}\Sig^n$.
For an alphabet $\Sigma$ we denote the length of $w \in \Sigma^*$ by $\len{\Sigma}{w}$. Throughout this work we will use different alphabets, and thus need to be more specific here. 

A dag is a directed acyclic graph. 
For a dag $\Gamma$ we write $\depth(\Gamma)$ for  the length (number of edges) of a longest path in $\Gamma$.

\subsection{Complexity}\label{sec:complexity}
We assume the reader to be familiar with the complexity classes \LOGSPACE and \P (polynomial time); see e.g.\ \cite{AroBar09} for details. 
Most of the time, however, we use circuit complexity within \NC.

Throughout, we assume that inputs to and outputs of functions $f$ are encoded over the binary alphabet $\oneset{0,1}$.
Let $k \in \N$. A function $f$ is in \Ac{k} if there is a family of polynomial-size Boolean circuits of depth $\Oh(\log^kn)$ (where $n$ is the input length) computing $f$.  
More precisely, a Boolean circuit is a dag (directed acyclic graph) where the vertices are either input gates $x_1, \ldots, x_n$, or \gnot, \gand, or \gor gates. 
Some of these gates are marked as output gates $o_1, \dots, o_m$.
All gates may have unbounded fan-in (\ie there is no bound on the number of incoming wires).
A function $f:\{0,1\}^* \rightarrow \{0,1\}^*$ belongs to $\Ac{k}$ if there exists a family $(C_n)_{n  \in \N}$ of Boolean circuits such that the $i$-th output gate $o_i$ evaluates to the $i$-th bit of $f(x)$  when assigning $x = x_1 \cdots x_n$ to the input gates.
Moreover, $C_n$ may contain at most $n^{\Oh(1)}$ gates and have depth $\Oh(\log^kn)$. Here, the depth of a circuit is the length of the longest path from an input gate to an output gate.
Likewise  a language $L$ is in $\Ac{k}$ if its characteristic function is $\Ac{k}$-computable.

The class \Tc{k} is defined analogously with the difference that also \gmaj gates are allowed (a \gmaj gate outputs $1$ if its input contains more $1$s than $0$s). Moreover, $\NC = \bigcup_{k\geq 0} \Tc{k} = \bigcup_{k\geq 0} \Ac{k}$. For more details on circuits we refer to \cite{Vollmer99}.

This is in fact the definition of non-uniform $\Ac{k}$. Here ``non-uniform'' means that the mapping $n \mapsto C_n$ is
not restricted in any way. In particular, it can be non-computable. But, the  class $\NC$ is  contained in \P if we consider uniform circuits.  The most ``uniform'' version of $\Ac{k}$ is \DLOGTIME-uniform $\Ac{k}$. For this,
one encodes the gates of each circuit $C_n$ by bit strings of length $\mathcal{O}(\log n)$. Then the circuit family $(C_n)_{n \geq 0}$
is called \emph{\DLOGTIME-uniform}  if (i) there exists a deterministic Turing machine that computes for a given gate $u \in \{0,1\}^*$
of $C_n$ ($|u| \in \mathcal{O}(\log n)$) in time $\mathcal{O}(\log n)$ the type of gate $u$, where the types are $x_1, \ldots, x_n$, \emph{not}, \emph{and}, \emph{or}, oracle gate,
and (ii) there exists a deterministic Turing machine that decides for two given gates $u,v \in \{0,1\}^*$
of $C_n$ ($|u|, |v| \in \mathcal{O}(\log n)$) and a binary encoded integer $i$ with $\mathcal{O}(\log n)$ many bits
in time $\mathcal{O}(\log n)$ whether $u$ is the $i$-th input gate for $v$.
In the following, we write $\uAC{k}$ for \DLOGTIME-uniform $\Ac{k}$ and similarly for $\Tc{k}$. 
For more details on these definitions we refer to~\cite{Vollmer99}.
If the language $L$ (or $K$) in the above definition of $\uAC{k}$ is defined over a non-binary alphabet $\Sigma$, then one first has to fix a binary
encoding of $\Sigma$ as words in $\{0,1\}^\ell$ for some large enough $\ell \in \N$.

To show that a circuit is uniform,  we will use that $\uAC{0}=\mathsf{FO}[<,\mathrm{bit}]$ \cite[Theorem 4.73]{Vollmer99}, with $\mathrm{bit}(i,j)$ holds if and only if the $j$th bit in the binary representation of $i$ is $1$. In particular, if a problem is describable by a first order formula, it is in $\uAC{0}$. 

Note that, as soon as we consider $\Tc{k}$-circuits with $k\geq 1$ we also can use $\L$-uniformity (which is defined similarly as \DLOGTIME-uniformity but is a weaker condition).
Our algorithms (or circuits) rely on two basic building blocks which can be done in $\uTC{0}$:

\begin{example}\label{ex:itAdd}
Base-$\pcBase$ iterated addition is as follows: on input of $n$ base-$\pcBase$ numbers $A_1, \dots, A_n$ each having $n$ digits, compute $\sum_{i = 1}^{n}A_i$ (in base-$\pcBase$ representation). 
For binary numbers this is well-known to be in $\uTC{0}$ (see \eg \cite{Vollmer99} for the description of a non-uniform circuit family; it is easy to transform that circuit family into a uniform circuit family). The standard proof can be translated for other bases. 
The result also can be easily derived from other existing results: as iterated multiplication and division are in $\uTC{0}$ by \cite{hesse01,HeAlBa02}, one can convert a base-$\pcBase$ integer to a binary integer, do the addition in binary, and convert the number back. 
\end{example}

\begin{example}\label{ex:sortTC}
	Let $(k_1, v_1), \dots, (k_n,v_n)$ be a list of $n$ key-value pairs $(k_i, v_i)$ equipped with a total order on the keys $k_i$ such that it can be decided in $\uTC{0}$ whether $k_i < k_j$. 
	Then the problem of sorting the list according to the keys is in $\uTC{0}$: the desired output is a list $(k_{\pi(1)}, v_{\pi(1)}), \dots, (k_{\pi(n)},v_{\pi(n)})$ for some permutation $\pi$ such that $k_{\pi(i)} \leq k_{\pi(j)}$  for all $i < j$.
	We briefly describe a circuit family to do so: The first layer compares all pairs of keys $k_i,k_j$ in parallel. For all $i$ and $j$ the next layer computes a Boolean value $P(i,j)$ which is true if and only if $\abs{\set{\ell}{k_\ell < k_i}} = j$. The latter is computed by iterated addition. 
	As a final step the $j$-th output pair is set to $(k_i, v_i)$ if and only if $P(i,j)$ is true. According to the above, these operations clearly are in $\uTC{0}$. 
\end{example}

 A function $f$ is \emph{$\Ac{k}$-reducible} to a function $g$ if there is a family of $\Ac{k}$-circuits computing $f$ which does not only use Boolean gates but may also use oracle gates for $g$. Such an oracle gate outputs $g(x)$ on input of $x$. We write 	$\uAcf{k}$ for the uniform family of problems which are $\uAc{k}$-reducible to the word problem of $F_2$. The word problem of $F_2$ is in $\LOGSPACE$ \cite{lz77}  and thus in $\uTC{1}$ \cite[Theorem 4.12]{Vollmer99}. So we obtain that $\uTC{i} \subseteq  \uAcf{i}
 \subseteq \uTC{i+1}$. Moreover, we can find every group $F_n$ as a subgroup of $F_2$. Thus,  $\uAcf{i}=\mathsf{uAC}^{i}(F_n)$.

\paragraph*{Generic case complexity.}
A set $I \sse \Sigma^*$ is called  \emph{strongly generic} if 
the probability to find a random string outside $I$ converges exponentially fast to zero~-- more precisely, if 
${|\Sigma^{n} \setminus I|}/ {|\Sigma^{n}|} \in 2^{-\Oh(n)}.$
Let $\cC$ be some complexity class. A problem $L \sse \Sigma^*$ is called \emph{strongly generically in $\cC$} if there is a strongly generic set $I \sse \Sigma^*$  and a (partial) algorithm (or circuit family) $\cA$ running within the bounds of $\cC$ such that $\cA$ computes the correct answer for every $w \in I$; outside of $I$ it  provides either the correct answer or none (or outputs ``unknown'').

\subsection{Notations from group theory}\label{sec:GroupTheory}
\newcommand{\absbeta}[1]{\abs{#1}_\beta}

\paragraph{Group presentations.}

By $F_n$ we denote the free group generated by $n$ elements.  For a group $G$ and $S \sse G$, we write $\gen{S}$ for the subgroup of $G$ generated by $S$.

 If $\Sigma = S \cup S^{-1}$ where $S^{-1}$ is some disjoint set of formal inverses and $R\sse \Sigma^* \times \Sigma^*$ is some set of relations, we write $G=\Gen{S}{R}$ for the group generated by $S$ subject to the relations that $s=r$ for $(r,s) \in R$. 
 Alternatively, we can view $G$ as the quotient monoid of $\Sigma^*$ modulo the congruence  generated by $R$ together with the relations $aa^{-1} = a^{-1} a = 1$ for $a \in \Sigma$.
 Thus, we obtain a canonical surjective monoid homomorphism $\eta\colon  \Sigma^* \to G$. 
 Usually, we do not write the homomorphism $\eta$ and treat words over $\Sigma$ both as words and as their images under $\eta$.
  We write $v =_G w$ with the meaning that $\eta(v) = \eta(w)$ and $v \in \gen{w_1, \dots, w_k}$ if $\eta(v)$ is contained in the subgroup $\gen{\eta(w_1), \dots, \eta(w_k)}$. Similarly, for a monoid homomorphism $\pi: \Theta^* \to \Delta^*$ (with  alphabets $\Theta, \Delta$) we write $v=_{\Delta}w$ if  $\pi(v)=\pi(w)$ (the homomorphism will be always clear from the context).

\medskip
\noindent The word problem for a fixed group $G=\Gen{S}{R}$ with $\Sigma$ as above is as follows:
\ynproblem{A word $w \in \Sigma^*$.}{Is $w =_G 1$?}
\noindent  For further background on group theory, we refer to \cite{LS01}.

\paragraph*{HNN-Extensions.} Let $G$ be a group. Let $A$ and $B$ be subgroups of $G$ and $\phi: A \rightarrow B$ an isomorphism. Let $t$ be a fresh symbol and let $F(t)$ be the free group generated by $t$. The HNN-extension of $G$ w.r.t. $A,B $ and $\phi$ is  defined as 
\begin{align*}
\text{HNN}(G;A,B,\phi)=(G* F(t))/\{tat^{-1}= \phi(a)~|~a \in A\}.
\end{align*} 	 
\paragraph*{Britton reductions.}
Britton reductions are a standard way to solve the word problem for HNN extensions. Let $H=\text{HNN}(G;A,B,\phi)$ be a HNN-extension and let $\Delta$ be the alphabet $\Delta=(G\setminus\{1\}) \cup \{t,t^{\mOne}\}$. A word $w \in \Delta^*$ is called \emph{Britton-reduced} if it is of the form 
\begin{align*}
w=g_0t^{\epsilon_1}g_1 \cdots t^{\epsilon_n}g_n
\end{align*} 
with $\epsilon_i \in \{\pm 1\}$ and $g_i \in G$ (note that we allow $g= 1$ being the empty word) and if there are no factors of the form $tat^{-1}$ with $a \in A$ or $t^{-1}\phi(a)t$ with $a \in A$. If $w$ is not Britton-reduced, one can apply one of the rules
\begin{align*}
tat^{-1} &\:\to\:\phi(a)\\
t^{-1}\phi(a)t &\:\to\: a\\
g_ig_j&\:\to\: [g_ig_j]
\end{align*} 
where $[g_ig_j] $ denotes the element $ g_ig_j$ in the group $G$ and, as above, the neutral element $1$ of $G$ is identified with the empty word. 
We obtain a shorter word (over the alphabet $\Del$) representing the same group element.  The application of such a rule is called a \emph{Britton-reduction}. 

\begin{lemma}[Britton's Lemma] \label{lem:BrittonsLemma}
	Let $w=g_0t^{\epsilon_1}g_1 \cdots t^{\epsilon_n}g_n$ be Britton-reduced and $n \geq 1$. Then $w \neq 1$ in $H$. 
\end{lemma} 

\newcommand{\cQsdrAbbr}{base-$q$ csdr\xspace}
\newcommand{\sdrAbbr}{sdr\xspace}

\section{Compact representations}\label{sec:compact}

Based on the concept of compact  sums and power circuits to base 2 (as introduced in \cite{MyasnikovUW12}), Laun \cite{Laun14} described so-called power sums: A power sum to base $\base \geq 2$ is a sum $\sum_{i\geq 0}a_i q^i$ with $a_i \in \interval{-q+1}{ q-1}$ and only finitely many $a_i$ are non-zero. We are interested in \textit{compact representations} of such power sums.  For $q=2$ they have already been considered  e.g. in  \cite{REITWIESNER60,GUNTZER87,Jedwab89,Shallit93} under different names. 
 In \cite[Proposition 2.18]{Laun14} it is shown that each power sum has a unique compact representation, which is obtained using a confluent rewriting system. Using Boolean formulas for this construction we show that it is in $\uAC{0}$. This will be an important ingredient for our power circuit operations to be in $\uTC{0}$. Note that in \cite[Theorem 3.2]{MattesWCC23} we gave a proof for base $\base=2$. Here, we fix $\base \geq 2$.

\begin{definition}
	Let $A=(a_0, \ldots, a_{m-1})$ be a sequence with $a_i \in \domQ $. 
\vspace*{-1.2mm}%
	\begin{itemize}
		\item We define $\val(A)=\sum_{i=0}^{m-1}a_i\cdot \base^i$.
		\item We call $A$ a \emph{(base-$\base$) signed-digit representation} (short \sdrAbbr) of $\val(A)$. 
		\item We call $A$ \emph{compact} if the following conditions hold for all $i \in \interval{0}{m-2} $: 
		\begin{enumerate}[(1)]
			\item if $\abs{a_i}=\base-1$, then $\abs{a_{i+1}}<\base-1$, 
			\item  if $\abs{a_i} \neq 0$, then  $a_{i+1}=0$ or $\sgn {a_{i}}=\sgn{a_{i+1}}$. 
		\end{enumerate}
	\end{itemize}
\end{definition}
We set $a_i=0$ for $i \geq m$. Note that, if $A=(a_0, \ldots, a_{m-1})$ is an \sdrAbbr with $a_i \in \interval{0}{\base-1}$, we have a usual base-$\base$ representation of an integer. Allowing negative digits gives more flexibility when working with power circuits~-- however, with the price that representations are no longer unique. This uniqueness property can be regained by requiring the \sdrAbbr to be compact. 
Moreover, \cQsdr{}s  (for short \cQsdrAbbr) can be compared easily.

\begin{lemma}[\!\,{\cite[Proposition 2.18]{Laun14}}] \label{lem:compareCQSDR}
	For every $x \in \Z$ there is a unique compact base-$\base$ signed-digit representation $A=(a_0, \dots, a_{m-1})$ with $\val(A)=x$. 
	
	Moreover, two \cQsdrAbbr{}s $A = (a_{0}, \ldots, a_{m-1})$ and $B = (b_{0}, \ldots,b_{m-1})$ can be compared using the lexicographical order~-- more precisely, $\val(A) < \val(B)$ if and only if $a_{i_{0}} < b_{i_{0}}$ for 
	$i_{0} = \max\set{i \in \interval{0}{m-1}}{a_{i} \neq b_{i}}$.
\end{lemma}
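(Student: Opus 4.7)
My plan is to establish both parts in an interleaved fashion, treating existence as a preliminary step and deriving uniqueness from the lex comparison. For existence, given $x \in \Z$, I would start from any sdr of $x$ (for instance, the standard base-$q$ expansion of $|x|$ with the sign of $x$ attached to every nonzero digit) and apply local value-preserving rewriting rules that enforce compactness. Two families of rules suffice: a carry rule handling violations of (1), e.g., $(q-1, q-1) \rightsquigarrow (-1, 0, 1)$ and its negative mirror (using $(q-1)(1+q) = q^2-1$), together with a sign-unification rule handling violations of (2) via the identity $a + b q = (a-q) + (b+1)q$ applied at the lowest position where signs disagree. Termination follows from a well-founded measure such as a violation count read from high to low positions, and checking joinability of the critical pairs between the two families yields confluence and hence a unique normal form.

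For the lex comparison I would induct on $i_{0}$. Let $A, B$ be csdrs with $a_{j} = b_{j}$ for all $j > i_{0}$ and, WLOG, $a_{i_{0}} < b_{i_{0}}$; write $A' = (a_{0}, \ldots, a_{i_{0}-1})$ and $B' = (b_{0}, \ldots, b_{i_{0}-1})$, which are themselves csdrs. The base case $i_{0} = 0$ is immediate since $\val(B) - \val(A) = b_{0} - a_{0} > 0$. In the inductive step I split
\[
\val(B) - \val(A) = (b_{i_{0}} - a_{i_{0}})\, q^{i_{0}} + \bigl(\val(B') - \val(A')\bigr),
\]
whose leading term is at least $q^{i_{0}}$, so that the whole sum is positive as soon as $\val(A') - \val(B') < q^{i_{0}}$. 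Uniqueness then follows immediately: two distinct csdrs of the same integer would be ruled out by applying the lex comparison at their top differing position.

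The main obstacle I anticipate is the sharp prefix bound $\val(A') - \val(B') < q^{i_{0}}$. A naive estimate using only the digit range $[-(q-1), q-1]$ yields $\leq 2(q^{i_{0}}-1)$, which is too weak for $i_{0} \geq 1$. The compactness conditions have to be pressed into service through an auxiliary lemma giving tight extremal bounds on the value of a length-$\ell$ csdr with prescribed top digit: when $b_{i_{0}} \neq 0$, the interface condition (2) forces the top prefix digit $b_{i_{0}-1}$ to have the same sign as $b_{i_{0}}$ (or be zero), and condition (1) restricts $|b_{i_{0}-1}|$ whenever $|b_{i_{0}}| = q-1$; symmetric constraints hold for $A'$. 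Splitting the argument by the possible sign patterns of $a_{i_{0}}$ and $b_{i_{0}}$ (including the cases where one of them is zero), and inserting the restricted extremal bounds on each side, should yield the required strict inequality and close the induction.
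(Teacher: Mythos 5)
The paper does not prove this lemma itself but imports it from Laun's Proposition 2.18, which~-- as the paper notes~-- establishes existence and uniqueness via a confluent rewriting system; your existence argument is therefore essentially the cited one, while the paper additionally supplies its own constructive route (the explicit carry formulas of \cref{lem: compactstepone} and \cref{lem: compactsteptwo} feeding \cref{thm:makeCompact}), which proves existence in $\Ac0$ without any termination or critical-pair analysis. Your treatment of the comparison is sound, and the ``auxiliary lemma giving tight extremal bounds'' that you anticipate needing is exactly the paper's \cref{lem:maxCQsdr}: a length-$\ell$ \cQsdrAbbr has absolute value at most $\floor{\base^{\ell+1}/(\base^2-1)}$. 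Plugging this in gives $\val(A')-\val(B')\leq 2\floor{\base^{i_0+1}/(\base^2-1)}$, and since $2\base<\base^2-1$ for $\base\geq 3$, this is already strictly below $\base^{i_0}\leq (b_{i_0}-a_{i_0})\base^{i_0}$~-- so the interface case analysis you brace for is only needed for $\base=2$, and there only when $b_{i_0}-a_{i_0}=1$; in that case condition (1) forces the top prefix digit on the side whose $i_0$-digit has absolute value $\base-1$ to vanish, yielding the bound $\floor{2^{i_0+1}/3}+\floor{2^{i_0}/3}<2^{i_0}$ and closing the argument. Two minor economies: once the lexicographic comparison is in place, uniqueness is immediate, so you do not need confluence of your rewriting system at all (termination alone gives existence); and your ``induction on $i_0$'' is really a direct estimate, since the inductive hypothesis is never invoked~-- the bound on the prefixes comes entirely from the extremal lemma.
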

If $A$ is a base-$\base$ \sdrAbbr, in the following we will write $\cor(A)$ for its \cQsdr. %

\begin{lemma}\label{lem:maxCQsdr}
	If $A=(a_0, \dots, a_{m-1})$ is a \cQsdrAbbr, then $\val(A) \leq \floor{\frac{\base^{m+1}}{\base^2-1}}$.
\end{lemma}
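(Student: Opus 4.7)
The plan is to identify the extremal \cQsdrAbbr of length $m$ and check that its value satisfies the stated bound. First, I observe that for the purpose of maximizing $\val(A)$ we may assume $a_i \in \interval{0}{q-1}$ for all $i$. Indeed, condition~(2) forces the non-zero digits to partition into maximal blocks of common sign separated by zeros; replacing a negative block by its positive counterpart preserves compactness and can only increase $\val(A)$.

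Next, under this non-negativity assumption the only binding constraint from compactness is condition~(1): no two consecutive digits may both equal $q-1$. The greedy choice maximizing $\val(A)$ places $q-1$ at as many high-order positions as possible, filling the gaps with $q-2$. This yields the alternating pattern $a_{m-1} = q-1,\; a_{m-2} = q-2,\; a_{m-3} = q-1,\; \ldots$ continuing down to $a_0$. Any other non-negative compact pattern can be transformed into this one (or its suffix) without decreasing $\val(A)$ by a standard exchange argument: whenever $a_i < q-1$ while $a_{i+1} < q-1$, shifting value upward never violates~(1) and increases the sum.

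The final step is to evaluate this maximum in closed form, using the geometric-series identity $\sum_{k=0}^{N-1} q^{2k} = (q^{2N}-1)/(q^2-1)$ and its odd-index variant, splitting by the parity of $m$. The main obstacle is the purely algebraic verification that the resulting expression is at most $\lfloor q^{m+1}/(q^2-1) \rfloor$, since the two contributions coming from the $(q-1)$-digits and the $(q-2)$-digits must telescope correctly against the target quantity. A cleaner alternative that avoids the parity split is a short induction on $m$: peel off the two highest-order digits $a_{m-1}, a_{m-2}$, bound their combined contribution via condition~(1) by $(q-1)q^{m-1} + (q-2)q^{m-2}$, and apply the induction hypothesis to the prefix $(a_0,\ldots, a_{m-3})$, which is itself a compact base-$q$ \sdrAbbr. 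The inductive step then reduces to a single inequality between rational functions of $q$, which can be checked directly.
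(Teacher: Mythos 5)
Your structural analysis is sound and is essentially the same as the paper's: the paper's proof likewise just asserts that the maximum is attained by the alternating pattern $a_{m-1}=\base-1$, $a_{m-2}=\base-2$, $a_{m-3}=\base-1,\dots$, and your reduction to non-negative digits together with the exchange argument is a legitimate (indeed more careful) justification of that claim.

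The genuine gap is the step you set aside as ``can be checked directly'': it cannot be, because the inequality you would need is false for $\base\ge 3$. The value of the extremal pattern is
\[
\sum_{i=0}^{m-1}(\base-2)\,\base^i+\sum_{j=0}^{\floor{(m-1)/2}}\base^{m-1-2j}\;=\;\floor{\frac{\base^{m}(\base^2-2)}{\base^2-1}},
\]
which coincides with the claimed bound $\floor{\base^{m+1}/(\base^2-1)}$ only when $\base=2$ (where $\base^2-2=\base$); for $\base\ge 3$ it is strictly larger because $\base^2-2>\base$. Concretely, for $\base=3$ and $m=2$ the sequence $(a_0,a_1)=(1,2)$ satisfies both compactness conditions (condition~(1) is vacuous since $\abs{a_0}=1\neq \base-1$, and condition~(2) holds since the signs agree) and has $\val(A)=7$, whereas $\floor{3^{3}/(3^2-1)}=3$. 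So the closing identity asserted without proof in the paper is wrong, the inductive inequality you propose fails, and the lemma as stated holds only for $\base=2$; the formula looks like an over-literal carry-over of the base-$2$ bound $\floor{2^{m+1}/3}$ from the authors' earlier work. Completing your argument honestly yields the bound $\val(A)\le\floor{\base^{m}(\base^2-2)/(\base^2-1)}$, which is what should replace the statement (and which still suffices for the later applications in \cref{lem: add} and \cref{lem: powtwo}).
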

\begin{proof}
	It is clear that $\val(A)$ becomes maximal if $a_{m-1} = \base-1$ and then (going from right to left) $\base -2$ and $\base -1$ alternate. Thus, the maximal value can be computed as
	\begin{align*}
		\sum_{i = 0}^{m-1} (\base -2) \base^i + \sum_{j = 0}^{\floor{(m-1)/2}} \base^{m-1 - 2j} = \floor{\frac{\base^{m+1}}{\base^2-1}}. 
	\end{align*}\qed
\end{proof}

Next, we construct the \cQsdrAbbr of a given \sdrAbbr $A=(a_0, \dots, a_{m-1})$. By giving this construction, we also show that it is possible to construct $\cor(A)$ in $\Ac{0}$ for a given \sdrAbbr $A$.  We start by restricting $A$ to only non-negative digits (\ie a usual base-$\base$ representation). In a first step we need the following formula for $i \geq 0$: 
\begin{align*}
	\cy_{i} = &\bigvee_{j\in \interval{1}{i}}\Bigl[(a_{j}=q-1) \wedge (a_{j-1}=q-1)  \\&\quad \wedge\bigwedge_{k\in \interval{j+1}{i-1}}\bigl(\left((a_{k}=q-1) \vee (a_{k+1}=q-1)\right)\wedge (a_k\geq \base-2)\bigr)\Bigr]
\end{align*}

\begin{lemma}\label{lem: compactstepone} 
	For every signed-digit representation $A=(a_0, \ldots, a_{m-1})$ with $a_i \in \interval{0}{\base-1} $ the sequence $B=(b_0, \ldots, b_m)$ defined by $ b_i=a_i -\base\cdot \cy_{i+1}+\cy_i$ satisfies:
	\begin{itemize}
		\item $\val(A)=\val(B)$
		\item  $b_i \in \interval{\mOne}{\base-1} $
		\item $b_i=\mOne \implies b_{i+1}=0$  
		\item $b_i=\base-1\implies b_{i+1} < \base-1$
	\end{itemize}
\end{lemma}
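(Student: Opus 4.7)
The plan is to reduce each assertion to the combinatorics of the carry-chains encoded by the Boolean expression $\cy_i$. I would first isolate three structural observations about $\cy_i$ and then derive the four bullet points by a short case distinction on $(\cy_i,\cy_{i+1})\in\oneset{0,1}^2$.

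The value equality $\val(A)=\val(B)$ is purely routine: expanding
\begin{align*}
	\sum_{i} b_i\base^i \;=\; \sum_{i} a_i \base^i + \sum_{i} \cy_i\base^i - \sum_{i} \cy_{i+1}\base^{i+1},
\end{align*}
the two $\cy$-sums telescope and leave only the boundary term $\cy_0$, which vanishes because its defining outer range $j\in\interval{1}{0}$ is empty. Well-definedness of all sums is automatic: from $a_k=0$ for $k\geq m$ one sees $\cy_i=0$ for $i\geq m+1$, so $B$ has length at most $m+1$.

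For the remaining three properties I would first record three auxiliary facts about $\cy$, each a direct consequence of the definition. \emph{Shrinking:} if $\cy_{i+1}=1$ via some witness $j\leq i$, then the same $j$ witnesses $\cy_i=1$, since the inner range $\interval{j+1}{i-1}$ of $\cy_i$ is a subrange of the inner range $\interval{j+1}{i}$ of $\cy_{i+1}$. \emph{Extending:} if $\cy_i=1$ with some witness $j$ and the position-$i$ test $\bigl((a_i=\base-1)\vee(a_{i+1}=\base-1)\bigr)\wedge(a_i\geq\base-2)$ holds, then $\cy_{i+1}=1$ via the same $j$. \emph{Boundary:} case analysis on whether the witness for $\cy_{i+1}=1$ is $j=i+1$ or $j\leq i$ shows that $\cy_{i+1}=1$ always forces $a_i\geq\base-2$, and that additionally $a_i<\base-1$ forces $\cy_i=1$ (via Shrinking) together with $a_{i+1}=\base-1$ (via the position-$i$ clause of the chain).

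Armed with these, the range bound $b_i\in\interval{\mOne}{\base-1}$ follows by a case split on $(\cy_i,\cy_{i+1})$: only $(0,1)$ (Boundary gives $a_i=\base-1$, hence $b_i=\mOne$) and $(1,0)$ (contrapositive of Extending gives $a_i\leq\base-2$, hence $b_i\leq\base-1$) need attention. The implication $b_i=\mOne\Rightarrow b_{i+1}=0$ reduces to $(\cy_i,\cy_{i+1},a_i)\in\oneset{(0,1,\base-1),(1,1,\base-2)}$; in the first sub-case Shrinking would turn any witness $j\leq i$ for $\cy_{i+1}$ into a witness for $\cy_i$, so $\cy_i=0$ forces $j=i+1$ and hence $a_{i+1}=\base-1$, while in the second sub-case Boundary gives $a_{i+1}=\base-1$ directly; in either case Extending then upgrades $\cy_{i+1}=1$ to $\cy_{i+2}=1$, yielding $b_{i+1}=(\base-1)-\base+1=0$. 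For $b_i=\base-1\Rightarrow b_{i+1}<\base-1$ the sub-cases are $(0,0,\base-1)$ and $(1,0,\base-2)$, so $b_{i+1}=a_{i+1}-\base\cy_{i+2}$; if $\cy_{i+2}=1$ the bound is immediate, and if $\cy_{i+2}=0$ I would derive $a_{i+1}\neq\base-1$ by contradiction, since assuming $a_{i+1}=\base-1$ either a fresh chain with witness $j=i+1$ (first sub-case) forces $\cy_{i+2}=1$, or Extending applied to the $\cy_i$-chain (second sub-case) forces $\cy_{i+1}=1$, both contradicting our hypotheses. The main obstacle is keeping this bookkeeping of chain witnesses tidy; once Shrinking, Extending, and Boundary are in place each bullet point boils down to a few lines of arithmetic.
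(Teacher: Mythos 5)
Your proof is correct and rests on the same elementary case analysis of the carry predicate $\cy_i$ as the paper's, but it is organized rather differently. The paper proves the range bound by an inline case split and then handles the two digit conditions by exhaustive contradiction tables: for $b_i=\mOne\Rightarrow b_{i+1}=0$ it first rules out $b_{i+1}=\mOne$ and then $b_{i+1}>0$, tabulating all feasible values of $(\cy_i,\cy_{i+1},\cy_{i+2})$ and exhibiting a contradiction in each row, and the last bullet is treated the same way. You instead isolate three propagation properties of the carry chains (Shrinking, Extending, Boundary~-- all of which the paper uses implicitly inside its tables, e.g.\ Boundary is its observation that $\cy_{i+1}=1$ forces $a_i\geq \base-2$ and that $a_i=\base-2$ then forces $\cy_i=1$) and derive each bullet positively: from $b_i=\mOne$ you pin $(\cy_i,\cy_{i+1},a_i)$ down to two triples, show $a_{i+1}=\base-1$ and $\cy_{i+2}=1$ in both, and read off $b_{i+1}=0$ directly instead of by elimination. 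Both routes cost about the same; yours is somewhat easier to audit because every appeal to the structure of $\cy$ factors through one of the three named facts, whereas the paper's tables ask the reader to re-derive the relevant implication cell by cell. Two cosmetic remarks for a final write-up: the telescoping leaves the two boundary terms $\cy_0$ and $\cy_{m+1}\base^{m+1}$ (you justify both, but name only the first), and in the range bound the case $(\cy_i,\cy_{i+1})=(1,1)$ also needs Boundary to get $b_i=a_i-\base+1\geq\mOne$, so it is not quite ``no attention needed''~-- though the fact required is already on your list.
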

Note that $b_i>0$ and $b_{i+1}=-1$ is still possible. This is the only condition that is missing for $b$ being compact. 

\begin{proof}
	First observe that because $e_{m+1}=e_0=0$, $\val(A)=\val(B)$ follows directly from the definition of the $b_i$.
	
	Next we show that $b_i \in \interval{\mOne}{\base-1}$. First assume that $\cy_{i+1}=1$. Then $a_i \geq \base-2$. If $a_i=\base-2$, then $\cy_i=1$ as otherwise $\cy_{i+1}=1$ is not possible. So if $e_{i+1}=1$, then $b_i \in \interval{\mOne}{\base-1}$. Further, if $a_i=\base-1$ and $\cy_i=1$, then $\cy_{i+1}=1$. So $b_i \in \interval{\mOne}{\base-1}$  if $e_i=1$. If $e_i=e_{i+1}=0$, then $b_i=a_i \in \interval{0}{\base-1}$.  \\

	To show that $b_i=\mOne$ implies that $b_{i+1}=0$ we first assume for a contradiction that $b_{i+1}=\mOne$ meaning that
	\begin{align}\label{minOne}
		\begin{split}
			b_i=a_i-\base\cdot \cy_{i+1} +\cy_i&=\mOne \qquad \text{ and}\\
			b_{i+1}=a_{i+1}-\base\cdot \cy_{i+2} +\cy_{i+1}&=\mOne. 
		\end{split}
	\end{align}
	
	If $e_j=e_{j+1}=0$, then $b_j=a_j$.  But $a_j \geq 0$ for all $j$, so this is not possible if $b_j=\mOne$. Moreover, if $e_j=1$ and $e_{j+1}=0$, then $a_j=-2$ for $j \in \{i,i+1\}$. This contradicts $a_j \geq 0$.   In the following table, we consider the remaining  possibilities for $e_i,e_{i+1},e_{i+2}$ and calculate $a_i$ and $a_{i+1}$ assuming (\ref{minOne}). In each row, the \textcolor{blue}{blue} entries lead to a contradiction which is described in the last column.

	\begin{center}
		\begin{tabular}[h]{|c|c|c||c|c||c|}
			\hline
			$\cy_{i}$ & $\cy_{i+1}$ & $\cy_{i+2} $ & $a_{i}$ & $a_{i+1}$ & contradicting \\
			\hline 
			\textcolor{blue}{$0$} & $1$ & $1$ & \textcolor{blue}{$\base-1 $} & \textcolor{blue}{$\base-2 $} & $\cy_{i+1}=1$\\
			\hline 
			$1$ & $1$ & $1$ & \textcolor{blue}{$\base-2 $} & \textcolor{blue}{$ \base-2$} & $\cy_{i+1}=1$\\
			\hline 
		\end{tabular}
	\end{center}
	Hence, we have shown that $b_i=\mOne$ implies $b_{i+1} \neq \mOne$.
	We now assume that 
	\begin{align*}
		b_i=a_i-\base\cdot \cy_{i+1} +\cy_i&=\mOne \qquad\text{ and}\\
		b_{i+1}=a_{i+1}-\base\cdot \cy_{i+2} +\cy_{i+1}&>0.
	\end{align*}
	As for (\ref{minOne})
	the cases $e_{i+1}=e_i=0$, and  $e_i=1,~ e_{i+1}=0$ lead to a contradiction. Moreover, if $e_{i}=0$ and $e_{i+1}=1$, then $a_i=a_{i+1}=q-1$. Thus, $e_{i+2}=1$. But then, $a_{i+1}>q-1$ which contradicts $a_{i} \leq q-1$. It remains the case that $e_i=e_{i+1}=1$. Then, $a_{i}=q-2$ and thus $a_{i+1}=q-1$. This contradicts $e_{i+2}=0$. 
	
%
	
	So we showed that, if $b_i=\mOne$, then $b_{i+1}=0$. It remains to show that $b_{i}=\base-1$ implies that $b_{i+1}<\base-1$. We assume that 
	\begin{align*}
		b_i=a_i-\base\cdot \cy_{i+1} +\cy_i&=\base-1 \qquad\text{ and}\\
		b_{i+1}=a_{i+1}-\base\cdot \cy_{i+2} +\cy_{i+1}&=\base-1.
	\end{align*}
	First observe that if $e_{j+1}=1$, then $a_j>\base-1$ for $j=i,i+1$. If $e_i=e_{i+1}=e_{i+2}=0$, then $a_i=a_{i+1}=\base-1$, which is a contradiction to $e_{i+1}=0$. It remains the case $e_i=1$, $e_{i+1}=e_{i+2}=0$. But then $a_i=\base-2$ and $a_{i+1}=\base-1$. With $e_i=1$ we obtain a conatradiction to $e_{i+1}=0$.  
	This shows the lemma. 
	\qed\end{proof}

For the second step~-- to make a signed-digit representation $B=(b_0, \ldots, b_m)$ as in \cref{lem: compactstepone} compact~-- we need the following formula: 
\begin{align*}
	f_i=\bigvee_{j\in \interval{i}{m}} \Bigl[(b_j=-1) \wedge  \bigwedge_{\ell\in \interval{i-1}{j-1}}(b_\ell >0) \Bigr]
\end{align*}
\begin{lemma}\label{lem: compactsteptwo}
	For every signed-digit representation $B=(b_0, \ldots, b_{m})$ satisfying the conditions of the output of \cref{lem: compactstepone} (\ie $b_i \in \interval{\mOne}{\base-1}$, $b_i=\base-1$ implies $b_{i+1}<\base-1$ and $b_i=-1$ implies $b_{i+1}=0$) the sequence $C=(c_0, \ldots, c_{m})$ defined by $c_i=b_i-\base\cdot f_{i+1} + f_{i}$  satisfies:  
	\begin{itemize}
		\item $\val(B)=\val(C)$
		\item $c_i \in \interval{-\base+1}{\base-1}$ 
		\item $C$ is compact. 
	\end{itemize}
\end{lemma}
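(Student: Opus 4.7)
The plan is to verify the three claims by case analysis on the carry bits $f_i, f_{i+1}$ (and occasionally $f_{i+2}$), using the convention $b_{-1} = 0$, so that $f_0 = 0$ (every disjunct in the formula for $f_0$ requires $b_{-1} > 0$) and noting that $f_{m+1} = 0$ by empty disjunction. Value preservation then follows by telescoping:
\[
\val(C) = \sum_{i=0}^{m} (b_i - q f_{i+1} + f_i)\, q^i = \val(B) - \sum_{i=0}^{m} f_{i+1}\, q^{i+1} + \sum_{i=0}^{m} f_i\, q^i = \val(B) - f_{m+1}\, q^{m+1} + f_0 = \val(B).
\]

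For the range $c_i \in \interval{-q+1}{q-1}$, I analyze the four cases of $(f_i, f_{i+1})$. The only potentially out-of-range values are $c_i = q$ (which would require $f_{i+1}=0$, $b_i=q-1$, $f_i=1$) and $c_i = -q$ (which would require $f_{i+1}=1$, $b_i=-1$, $f_i=0$). Both are excluded directly by the defining formula for~$f$: in the first case the $-1$ and positive run witnessing $f_i=1$, combined with $b_i=q-1>0$, also witness $f_{i+1}=1$, contradicting $f_{i+1}=0$; in the second case $f_{i+1}=1$ is witnessed by a $-1$ at some $j\geq i+1$ with $b_i,\dots,b_{j-1}>0$, contradicting $b_i=-1$.

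Compactness condition~(1), $|c_i|=q-1 \Rightarrow |c_{i+1}|<q-1$, splits by the sign of $c_i$. For $c_i = q-1$, the argument above forces $b_i=q-1$ and $f_i=f_{i+1}=0$; the hypothesis $b_i = q-1 \Rightarrow b_{i+1} < q-1$ on~$B$, together with the observation that $b_{i+1}=-1$ would reactivate $f_{i+1}=1$, restricts $b_{i+1}\in\interval{0}{q-2}$, and a short check on $f_{i+2}$ gives $c_{i+1}\in\interval{0}{q-2}$. For $c_i = -(q-1)$, the situation forks into $(b_i,f_i,f_{i+1}) = (-1,0,0)$~-- where the hypothesis $b_i = -1 \Rightarrow b_{i+1}=0$ on~$B$ applies and forces $c_{i+1}=0$~-- or $(b_i,f_i,f_{i+1})=(1,0,1)$, in which case the witnessing index $j$ of $f_{i+1}$ determines whether $c_{i+1} = 0$ (when $j = i+1$) or $c_{i+1} = b_{i+1} - q + 1 \in \interval{2-q}{0}$ (when $j > i+1$, which also forces $f_{i+2}=1$). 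Condition~(2) reduces to excluding the two sign flips $c_i > 0, c_{i+1} < 0$ and $c_i < 0, c_{i+1} > 0$: in each direction, the $-1$ or positive run witnessing one of $f_{i+1}, f_{i+2}$ can be repurposed as a witness for the other, and both hypotheses on~$B$ from \cref{lem: compactstepone} are needed to close the argument.

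The main obstacle is the bookkeeping: since the definition of $f_i$ consults $b_{i-1}$, a positive run witnessing $f_i$ can straddle index~$i$, and each subcase must invoke whichever of the two structural hypotheses on~$B$ is appropriate. Apart from that, the verification is essentially a table-style enumeration in the spirit of the one carried out in the proof of \cref{lem: compactstepone}.
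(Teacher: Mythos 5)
Your proposal is correct and follows essentially the same route as the paper's proof: value preservation by telescoping (using $f_0=f_{m+1}=0$), and then a case analysis on $(f_i,f_{i+1},f_{i+2})$ driven by the two structural facts that $f_{i+1}=1$ forces $b_i>0$ and that $f_i=1$ together with $f_{i+1}=0$ forces $b_i=-1$, which is exactly what the paper isolates as its observations (I)--(III) before running the same enumeration. The only slip is arithmetic and harmless: the out-of-range value arising from $(b_i,f_i,f_{i+1})=(-1,0,1)$ is $-(q+1)$ rather than $-q$, and the triples $(-1,1,1)$ and $(0,0,1)$ also produce values below $-q+1$, but your single exclusion argument ($f_{i+1}=1$ forces $b_i>0$) disposes of all of these at once.
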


\begin{proof}
	As in the proof of the previous lemma, because of  $f_{m+1}=f_0=0$, $\val(B)=\val(C)$ follows directly from the definition of the $c_i$. Before proving the other conditions, we first make the following observations: 
	\begin{enumerate}[(I)]
		\item  \label{pone}$b_i=0 \implies f_i=f_{i+1}=0$; therefore, $b_i=0 \implies c_i=0$. 
		\item  \label{ptwo}$b_i=\mOne $ implies that $b_{i+1}=0$, and thus also $f_{i+1}=0$. 
		\item  \label{pthree} If $f_{i}=1$ and $f_{i+1}=0$, then $b_i=\mOne$.  
	\end{enumerate}

	It is clear that, if $0<b_i<\base-1$, then $c_i \in \interval{-\base+1}{\base-1}$. If $b_i=0$, then  $c_i = 0$ by \eqref{pone}. If $b_i=\mOne$, then $c_i \in \oneset{-1,0}$ by \eqref{ptwo}. Now assume that $b_i=\base-1$. If $f_i=f_{i+1}=0$, then $c_i=\base-1$. If $f_{i+1}=1$, then $c_i \in \oneset{\mOne, 0}$. Because of \eqref{pthree}, $f_i=1$ and $f_{i+1}=0$ is not possible. So we showed that $c_i \in \interval{-\base+1}{\base-1}$. \\

	Now we will prove that $C$ is compact. First, we show that $\abs{c_i} =\abs{ c_{i+1}} = \base -1$ is not possible. Observe that $|c_i|=\base-1$ is only possible if $b_i \in \oneset{0, 1, \base-2, \base-1} $. Because of \eqref{pone} only the cases $b_i, b_{i+1} \in \oneset{1,\base-2,\base-1}$ remain.

	If $f_i=f_{i+1}=f_{i+2}=0$, then $c_i=b_{i}$ and $c_{i+1}=b_{i+1}$ and we are done as already in $B$  there are no two $q-1$'s next to each other.  Moreover, if $b_j \in  \oneset{1,\base-2,\base-1}$, then $\abs{b_j-\base +1} < \base -1$. So there is nothing more to show in case $f_{i+1}=f_{i+2}=1$.   
	If $f_{i+2}=1$ and $b_i, b_{i+1} \in  \oneset{1,\base-2,\base-1}$, then $f_{i+1}=1$~-- so the previous case applies. The remaining cases ($f_{i+2}=0$ and either $f_{i+1} =1$ or $f_i=1$) are ruled out by \eqref{pthree}. 
	\\

	We still have to show that if $c_i \neq 0$, then $c_{i+1}=0$ or $\sgn{c_i}=\sgn{c_{i+1}}$. First assume that $c_i>0$. Then $f_{i+1}=0$ and by \eqref{pone} we have $b_i>0$. If  $c_{i+1}<0$ and  $f_{i+2}=0$, then $b_{i+1}=\mOne$. Because $b_i>0$ this implies that $f_{i+1}=1$, which is a contradiction.
	So if $c_{i+1}<0$, then $f_{i+2}=1$ and so $b_{i+1}>0$. Together with $b_i>0$ this implies that $f_{i+1}=1$. So, $c_i>0$ and $c_{i+1}<0$ is not possible. 
	
	Now assume that $c_i<0$. If $f_{i+1}=0 $, then $ b_i=\mOne$. So by assumption, $b_{i+1}=0$. Thus, $c_{i+1}=0$ by \eqref{pone}. 
	So assume that $f_{i+1}=1$ and $c_{i+1}>0$. It follows that $f_{i+2}=0$. 
	So \eqref{pthree} implies that $b_{i+1}=\mOne$. Thus, $c_{i+1} = 0$.  So we considered all possible cases and  showed that $C$ is compact. 
	This proves the lemma. 
	\qed\end{proof}

\begin{theorem}\label{thm:makeCompact}
	The following is in \uAC{0}:
	\compproblem{A base-$\base$ signed-digit representation $A=(a_0, \ldots, a_{m-1})$. }{ A \cQsdrAbbr $B=(b_0, \ldots, b_m)$  such that $\val(A)=\val(B)$.} 
\end{theorem}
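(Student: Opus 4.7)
The plan is to produce the \cQsdrAbbr of $A$ by composing three $\Ac{0}$-computable transformations.

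\emph{Stage 1 (non-negativisation).} First, I decompose $A$ into its positive and negative parts $A^{+}=(a_i^{+})_{i}$ and $A^{-}=(a_i^{-})_{i}$ with $a_i^{+}=\max\oneset{a_i,0}$ and $a_i^{-}=\max\oneset{-a_i,0}$; both are usual base-$\base$ representations with digits in $\interval{0}{\base-1}$ and $\val(A)=\val(A^{+})-\val(A^{-})$. A lexicographic comparison of $A^{+}$ and $A^{-}$ (an $\Ac{0}$ formula of polynomial size and constant depth) fixes the sign $s\in\oneset{+1,-1}$ of $\val(A)$. Then base-$\base$ subtraction of the smaller of $A^{\pm}$ from the larger is carried out via the standard borrow-lookahead: since $\base$ is a fixed constant, the borrow into position $i$ is a disjunction over positions $j<i$ at which a borrow is generated conjoined with a conjunction over the intermediate positions at which it is propagated, giving a polynomial-size, depth-$\Oh(1)$ formula per output digit. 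The outcome is a non-negative base-$\base$ representation $A'=(a_0',\dots,a_m')$ of $\abs{\val(A)}$.

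\emph{Stage 2 (compactification).} Next, I feed $A'$ into \cref{lem: compactstepone} and output $b_i=a_i'-\base\cdot\cy_{i+1}+\cy_i$. The formula for $\cy_i$ displayed just before the lemma is a disjunction over $j\in\interval{1}{i}$ of a conjunction of constant-size clauses together with an inner conjunction over $k\in\interval{j+1}{i-1}$; hence it has polynomial size and constant depth, so it lies in $\Ac{0}$. The resulting sequence $B$ fulfils the hypotheses of \cref{lem: compactsteptwo}, which I apply via the analogous constant-depth formula for $\cq_i$ to obtain $C=(c_0,\dots,c_m)$; again this is in $\Ac{0}$. By the two lemmas, $C$ is a \cQsdrAbbr with $\val(C)=\val(A')=\abs{\val(A)}$.

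\emph{Stage 3 (sign correction).} If $s=-1$, I output $(-c_0,\dots,-c_m)$; otherwise $C$ itself. Both compactness conditions are phrased in terms of $\abs{c_i}$ and of whether $\sgn{c_i}=\sgn{c_{i+1}}$ for consecutive non-zero digits, and both are preserved under simultaneous digit-wise negation; moreover $\val(-C)=-\val(C)$. Hence the output is a \cQsdrAbbr whose value equals $\val(A)$.

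The main obstacle I anticipate lies in Stage~1: one has to verify carefully that base-$\base$ subtraction of two length-$m$ sequences with digits in $\interval{0}{\base-1}$ is in $\Ac{0}$. This amounts to the classical generate--propagate--kill description of borrows, translated from the binary to the base-$\base$ setting, but with $\base$ a fixed constant this is routine. Once that reduction is in place, Stages~2 and~3 inherit their $\Ac{0}$ bound immediately from the explicit Boolean formulas for $\cy_i$ and $\cq_i$ supplied by \cref{lem: compactstepone,lem: compactsteptwo}.
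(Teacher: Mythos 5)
Your proposal is correct and follows essentially the same route as the paper's proof: split $A$ into non-negative and non-positive parts, compute the base-$\base$ representation of the absolute difference in \Ac0, apply \cref{lem: compactstepone} and \cref{lem: compactsteptwo}, and negate digit-wise if the value is negative. The only difference is that you spell out the borrow-lookahead argument for base-$\base$ subtraction, which the paper delegates to the literature.
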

\begin{proof}
	Observe that there exist signed-digit representations $C=(c_0, \ldots, c_{m-1})$ and $D=(d_0, \ldots, d_{m-1})$ such that $c_i, d_i \in \interval{0}{ q-1}$ and such that $\val(A)=\val(C)-\val(D)$ (we just collect the negative digits of $A$ into $D$ and the positive ones into $C$). Now, we compute $\abs{\val(C)-\val(D)}$ in the usual base $\base$  representation (which is in $\uAC{0}$~-- see \eg \cite[Theorem 1.15]{Vollmer99}  for base 2; the general case follows the same way) and make it compact by first applying
	\cref{lem: compactstepone} and then \cref{lem: compactsteptwo}. This clearly is in $\uAC{0}$ because we showed that it can be described by formulas using only $\land, \lor$ and addition. If $\val(C)-\val(D) < 0$, we invert this number digit by digit.
\qed\end{proof}

\section{Power circuits} \label{sec: PowerCircuits}

The original definition \cite{MyasnikovUW12} is for power circuits to base 2. Here, following \cite{Laun14}, we define power circuits with respect to an arbitrary base $\pcBase$~-- hence, from now on we fix $\pcBase \geq 2$.  
Consider a pair $(\GG,\del)$ where $\GG$ is a set of $n$ vertices and $\del$ is a mapping $\del\colon  \GG \times \GG\to \domQ$.
The support of $\del$ is the subset $\supp(\del) \sse \GG \times \GG$ 
consisting of those $(P,Q)$ with $ \del(P,Q) \neq 0$.  
Thus, $(\GG,\supp(\del))$ is a directed graph without multi-edges. Throughout we require that $(\GG, \supp(\del))$ is acyclic~-- \ie it is a dag. 
In particular, $\del(P,P) = 0$ for all vertices $P$. 
 A \ei{marking} is a mapping $M\colon  \GG \to \domQ$. 
Each node $P\in \GG$ is associated in a natural way with a marking $\LL_P\colon  \GG \to \domQ, \; Q \mapsto \del(P,Q)$ called its successor marking. The support of $\LL_P$ consists of the target nodes of outgoing edges from $P$. We denote the marking with the empty support by $\emptyset$. 
We define the \ei{evaluation} $\e(P)$ of a node ($\e(M)$ of a marking resp.)
bottom-up in the dag by induction:
\begin{align*}
\e(\es) & = 0, \\
\e(P) & = \pcBase^{\e(\LL_P)} &\text{for a node $P$}, \\
\e(M) & = \sum_{P}M(P)\e(P) &\text{for a marking  $M$}.
\end{align*}
We have $\eps(\LL_P) = \log_\pcBase(\eps(P))$, \ie the marking $\LL_P$ plays the role of a logarithm. Note that nodes of out-degree zero (sinks) evaluate to $1$ and every node evaluates to a positive real number. However, we are only interested in the case that all nodes evaluate to integers:

\begin{definition}\label{def:PC}
	A (base-$\pcBase$) \emph{\PC} is a pair $(\GG,\del)$ with $\del\colon  \GG \times \GG\to \domQ$ such that $(\GG, \supp(\del))$
	is a dag and all nodes evaluate to an integer in $\pcBase^{\N}$. 
\end{definition}
The size of a power circuit is the number of nodes $\abs{\Gamma}$.
By abuse of language, we also simply call $\Gamma$ a power circuit and suppress $\delta$ whenever it is clear. 
If $M$ is a marking on $\Gamma$ and $S \sse \Gamma$, we write $M|_S$ for the restriction of $M$ to $S$.
Let  $(\Gamma', \delta')$ be a power circuit, $\Gamma \sse \Gamma'$, $\delta = \delta'|_{\Gamma \times \Gamma}$, and  $\delta'|_{\Gamma \times (\Gamma'\setminus \Gamma)} = 0$. Then $(\Gamma, \delta)$ itself is a power circuit. We call it a \emph{sub-power circuit} and denote this by $(\Gamma, \delta) \leq (\Gamma', \delta')$.
If $M$ is a marking on $ S \sse \Gamma$, we extend $M$ to $\Gamma$ by setting $M(P) = 0$ for $P \in \Gamma \setminus S$. With this convention, every marking on $\Gamma$ also can be seen as a marking on $\Gamma'$ if  $(\Gamma, \delta) \leq (\Gamma', \delta')$.
If $M$ is a marking, we write $-M$ for the marking defined by $(-M)(P) = - (M(P))$, which clearly evaluates to $-\eps(M)$. 

\begin{example}\label{ex:powtow}
	A \PC of size $n+1$ to base $\pcBase$ can realize $\tow_q(n)$ since a directed path of $n+1$ nodes represents
	$\tow_q(n)$ as the evaluation of the last node. The following power circuit to base $2$ realizes $\tow_2(5)$ using $6$ nodes: 	
\begin{center}
			\tikzstyle{pcnode} = [minimum size = 12pt,circle,draw ]
				\begin{tikzpicture}[outer sep = 0pt, inner sep = 0.7pt, node distance = 40]
					{\footnotesize
						\node[pcnode] (1)    {};
						\node[pcnode, right of = 1] (2)   {};
						\node[pcnode, right of = 2] (3)    {};
						\node[pcnode, right of = 3] (4)   {};
						\node[pcnode, right of = 4] (5)   {};
						\node[pcnode, right of = 5] (6)   {};
						\node [below of=1, yshift=28](7){\footnotesize $1$};
						\node [below of=2, yshift=28](8) {\footnotesize $2$};
						\node [below of=3, yshift=28](9){\footnotesize $4$};
						\node [below of=4, yshift=28](10){\footnotesize $16$};
						\node [below of=5, yshift=28](11){\footnotesize $65536$};
						\node [below of=6, yshift=28](12){\footnotesize $2^{65536}$};
						\node [left of=1](13){};
						\node [left of=7](17){\small $\epsilon(P) $};
						
\path[->]         		(2) edge node[above, yshift=2]{\tiny $+1$} (1)
						(3) edge node[above, yshift=2]{\tiny $+1$}  (2)
						(4) edge node[above, yshift=2]{\tiny $+1$} (3)
						(5) edge node[above, yshift=2]{\tiny $+1$} (4)
						(6) edge node[above, yshift=2]{\tiny $+1$} (5)	
						;
}
				\end{tikzpicture}
		
			\end{center}
\end{example}

\begin{example}\label{ex:binarybasis}
	We can represent every integer in the range 
	$[-\pcBase^n + 1,\pcBase^n-1]$ by some marking on a base $\base$ \PC  with nodes $\oneset{P_{0} \lds P_{n-1}}$ with $\e(P_i) = \pcBase^{i}$ for $i \in \interval{0}{n-1}$.
	Thus, we can convert the $\pcBase$-ary notation of an $n$-digit integer into a \PC
	with $n$ vertices, $\Oh( n\log_{\pcBase} n)$ edges (each successor marking requires at most $\floor{\log_{\pcBase} n} + 1$ edges) and depth at most $\log_{\pcBase}^*n$. %
	 For an example of a marking representing the integer $187$ to base $3$, see \cref{fig: exmarking}. 
\end{example}

\begin{figure}[tbh]
	\begin{center}
		\tikzstyle{pcnode}=[minimum size= 12pt,circle,draw ]
		\begin{tikzpicture}[scale=1, outer sep=0pt, inner sep = 0.7pt, node distance=1.2cm]
			{

				\node[pcnode] (0) at (0,0) {\textcolor{blue}{\tiny $-2$}};
				\node[pcnode] (1) [right of=0] {};
				\node[pcnode] (2) [right of=1] {};
				\node[pcnode] (3) [right of=2] {\textcolor{blue}{\tiny +1}};
				\node[pcnode] (4) [right of=3] {\textcolor{blue}{\tiny +2}};
				
				\node (6) [below of =0, yshift=9] {\footnotesize  1};
				\node (7) [below of =1,yshift=9] {\footnotesize  3};
				\node (8) [below of =2,yshift=9] {\footnotesize  9};
				\node (9) [below of =3,yshift=9] {\footnotesize 27};        
				\node (10) [below of =4,yshift=9] {\footnotesize  81};

				\draw[->] (1) edge node[above, yshift=1] {\tiny +1} (0)
				(2) edge [bend left=25] node[below, yshift=-1 ] {\tiny+2} (0)
				(4) edge [bend right=38]node[below, xshift=-25, yshift=2] {\tiny+1} (1)
				(4) edge[bend right=39] node[below left, xshift=-40, yshift=-3] {\tiny+1} (0)
				(3) edge[bend right=35] node[above,  xshift=23, yshift=-4] {\tiny+1} (1)
				;
			}
		\end{tikzpicture}
	\end{center}
		\caption{Each integer $z \in \interval{-242}{242}$ can be represented by a marking on the following power circuit. The marking given in \textcolor{blue}{blue} is representing the number $187$.}\label{fig: exmarking}
\end{figure}
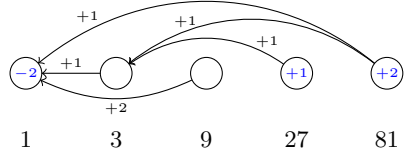\vspace{-4mm}
\begin{definition}\label{def:PCreduced}
	\begin{enumerate}[(a)]
		\item 	
		We call a marking $M$ \emph{compact} if for all $P, Q \in \supp(M)$  with  $P \neq Q$  we have $\eps(P) \neq \eps(Q)$ and, if $\abs{M(P)}=\abs{M(Q)}=q-1$ or $\sgn{M(P)}\neq \sgn{M(Q)}$, then $\abs{\epsilon(\Lambda_{P})-\epsilon(\Lambda_{Q})} \geq 2$. 
		\item A \emph{reduced power circuit} of size $n$ is a power circuit $(\Gamma,  \delta)$ with $\Gamma$ given as a sorted list $\Gamma = (P_{0}, \ldots, P_{n-1})$ such that all successor markings are compact and $\epsilon(P_{i}) < \epsilon(P_{j})$ whenever $ i < j$. In particular, all nodes have pairwise distinct evaluations. 
	\end{enumerate}
\end{definition}
Note that by \cite[Corollary 6.6]{MattesWCC23} it is crucial that the nodes in $\Gamma$ are sorted by their values. Still, sometimes it is convenient to treat $\Gamma$ as a set~-- we write $P \in \Gamma$ or $S \sse \Gamma$ with the obvious meaning. 

Also note some slight differences compared to other literature:
In \cite{DiekertLU13ijacstacs,Laun14}, the definition of a reduced power circuit also contains a bit-vector indicating which nodes have successor markings differing by one~-- we compute this information on-the-fly whenever needed. Moreover, in \cite{Laun14} the (successor) markings of a reduced power circuit do not have to be compact.  Working only with compact markings helps us to compare them in $\uAC{0}$.

\begin{remark}\label{rem:toposort}
	If $(\Gamma, \delta)$ is a reduced power circuit with $\Gamma = (P_{0}, \ldots, P_{n-1})$, we have $\delta(P_i, P_j) = 0$ for $j \geq i$. Thus, the order on $\Gamma$ by evaluations is also a topological order on the dag $(\Gamma, \supp(\delta))$.
\end{remark}

\begin{definition}\label{def:chain}
	Let $(\Gamma, \delta)$ be a reduced power circuit with $\Gamma = (P_{0}, \ldots , P_{n-1})$. 	
	\vspace*{-1.3mm}\begin{enumerate}[(i)]
		\item A \emph{chain} $C$ of length $\ell = \abs{C}$ in $\Gamma$ is a sequence $(P_{i}, \ldots, P_{i+\ell-1})$ such that $\epsilon(P_{i+j+1}) = q \cdot \epsilon(P_{i+j})$ for all $j \in \interval{0}{\ell-2} $.

		\item We call a chain $C$ \emph{maximal} if it cannot be extended in either direction. 
		We denote the set of all maximal chains by $\mathcal{C}_{\Gamma}$.
		
		\item There is a unique maximal chain $C_0$ containing the node $P_0$ of value $1$. We call $C_0$ the \emph{initial maximal chain} of $\Gamma$ and denote it by $C_0 = C_0(\Gamma)$.
		
	\item Let $M$ be a marking on $(\Gamma, \delta)$ and  $C = (P_{i}, \ldots, P_{i+\ell-1}) \in \mathcal{C}_{\Gamma}$. For $a_{j} = M(P_{i+j})$ for $j \in \interval{0}{\ell-1}$ we write $\binM{M}{C} = (a_{0}, \ldots, a_{\ell-1})$.

	\end{enumerate}
\end{definition}
Note that a marking $M$ is compact if and only if $\binM{M}{C}$ is compact (in the sense of \cref{sec:compact}) for all $C \in \mathcal{C}_{\Gamma}$.

\subsection{Operations on reduced power circuits}\label{sec:pc_operations}

\newcommand{\intervalBig}[2]{\Bigl[ \mathinner{#1}..\mathinner{#2}\Bigr] }

We continue with  fixed $\pcBase\geq 2$ and assume that all power circuits are with respect to base $\pcBase$. We can  compare compact markings on reduced base-$\pcBase$ power circuits in $\uAC{0}$. 

 \begin{lemma}[\normalfont \comp] \label{lem:compareCompactMarkings}
	Let ${\comparator} \in \compOpSet$. The following problems are in $\uAC{0}$:
	\begin{enumerate}[(a)]
		\item\label{compMarkings} \ynproblem{A reduced power circuit $(\Gamma, \delta)$ and compact markings $L,M$ on $\Gamma$.}{Is $\epsilon(L) \comparator \epsilon(M)$? }
		\item\label{compMarkingsPlusK} \ynproblem{A reduced power circuit $(\Gamma, \delta)$ with compact markings $L, M$ and $k \in \interval{0}{\bigl\lfloor\frac{q^{\abs{C_{0}}+1}}{q^2-1} \bigr\rfloor } $ given in binary.}{ Is $\epsilon(L) \comparator \epsilon(M)+ k $?} 
	\end{enumerate}
\end{lemma}
The proof of part (\ref{compMarkings}) is a straightforward application of \cref{lem:compareCQSDR} (checking these conditions can be cleary expressed by a first order formula and thus is in $\uAC{0}$) and of the fact that a marking $M$ is compact if and only if $\binM{M}{C}$ is compact for all $C \in \mathcal{C}_{\Gamma}$. 

To prove part (\ref{compMarkingsPlusK}), we need a base-$q$ analog of \cite[Lemma 4.3]{MattesWCC23}. The proof of this analog is verbatim the same, just replacing $2$ by $q$. Using this, to prove (\ref{compMarkingsPlusK}), we only need addition of $q$-ary numbers and  part (\ref{compMarkings}). 
\medskip

In our definition of reduced power circuits there is no data structure describing the maximal chains. Using the following lemma, we can compute the maximal chains  whenever needed:
\begin{lemma}[{\cite[Corollary~4.7]{MattesWCC23}}]\label{cor:findChains}
	We can decide in $\uAC{0}$, given a reduced power circuit $(\Gamma, \delta)$ and nodes $P,Q \in \Gamma$, whether $P$ and $Q$ belong to the same maximal chain of $\Gamma$.
\end{lemma}
Note that in \cite{MattesWCC23} the lemma was stated in the non-uniform case. But the proof only involves comparison of compact markings and \sdr{}s, and so it clearly is in $\uAC{0}$. 
\medskip

The next lemma turns out to be quite versatile and of interest on its own. In particular, it allows to compare a marking on a reduced power circuit in $\uTC{0}$ with some integer given in binary. Moreover, we use it to get rid of the technical condition $\mu \leq \floor{{2^{\abs{C_{0}(\Gamma)}+1}}/{3}} $ of \cite[Lemma~4.15]{MattesWCC23} leading to \cref{stz:extendChains} below. 

\begin{lemma}\label{lem: longchain}
The following problem is in $\uTC{0}$: 
\compproblem{A reduced power circuit $(\Gamma, \delta)$ and $\mu \in \mathbb{N}$ given in unary.  }{A reduced power circuit $(\Gamma', \delta')$ such that $\abs{\mathcal{C}_0(\Gamma')} \geq \mu$ and\newline $(\Gamma, \delta)\leq(\Gamma', \delta') $ (and $\abs{\Gamma'} \leq \abs{\Gamma} + \mu$). }
\end{lemma}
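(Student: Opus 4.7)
Let $\ell := \abs{C_0(\Gamma)}$. If $\ell \geq \mu$ we output $\Gamma' := \Gamma$. Otherwise the plan is to attach $\mu - \ell$ fresh nodes $R_{\ell}, \ldots, R_{\mu-1}$ that extend the existing initial chain, arranged so that $\epsilon(R_j) = q^j$. The successor marking of $R_j$ is the compact signed-digit representation $D_j = (d_0^{(j)}, \ldots, d_{\lambda_j - 1}^{(j)})$ of the integer $j$, placed at positions $0, 1, \ldots, \lambda_j - 1$ of the extended chain (\ie referring to $P_i$ if $i < \ell$ and to $R_i$ if $\ell \leq i < j$). Since $\lambda_j = \Oh(\log_q \mu) < j$ for all relevant $j$, this is acyclic, and $R_j$ indeed evaluates to $q^j$.

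To determine $\ell$ in \Ac{0}, we compute for each $k \in \interval{0}{\mu - 1}$ in parallel the csdr $D_k$ via \cref{thm:makeCompact} and then check coefficient-by-coefficient whether $\Lambda_{P_k}$ (if $P_k$ exists in $\Gamma$) coincides with the marking $E_k$ on $\Gamma$ defined by $E_k(P_i) = d_i^{(k)}$ for $i < \lambda_k$ and $E_k(Q) = 0$ otherwise; we set $\ell$ to the least $k$ for which this equality fails, or $\mu$ if all succeed. Correctness rests on two observations: (i) the \cQsdrAbbr is unique (\cref{lem:compareCQSDR}); and (ii) on a reduced power circuit any compact marking with nonzero support outside the initial chain necessarily evaluates to absolute value at least $q^\ell$, because by the maximality of $C_0(\Gamma)$ the next chain of $\Gamma$ starts at value $\geq q^{\ell+1}$ and chain-separation forbids cancellation down to small integers. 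Together these imply that for $k < \ell$ the unique compact marking on $\Gamma$ evaluating to $k$ is $E_k$ itself, so the test identifies exactly the chain nodes; the check fails at $k = \ell$ because $\epsilon(P_\ell) = q^m$ with $m \geq \ell + 1 \neq \ell$.

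Next we build the fresh nodes $R_j$ for $j \in \interval{\ell}{\mu - 1}$ in parallel with the successor markings above. A subtle point is that some old non-chain node $P_i$ (with $i \geq \ell$) might already evaluate to $q^j$ for some $j \in \interval{\ell}{\mu - 1}$, duplicating the value of $R_j$. Every such collision is detected in parallel by the same coefficient test (compare $\Lambda_{P_i}$ against $E_j$); we then suppress the redundant $R_j$ and, in parallel, redirect any edge that targeted it so that it points to $P_i$ instead. Finally, we merge the surviving new nodes with the old non-chain nodes $P_\ell, \ldots, P_{n-1}$ into one list sorted by value, using \cref{lem:compareCompactMarkings} for pairwise comparisons and a standard \TC sorting network; prepending $P_0, \ldots, P_{\ell - 1}$ yields $\Gamma'$. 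By construction $\Gamma'$ is reduced, contains $(\Gamma, \delta)$ as a sub-power circuit, satisfies $\abs{C_0(\Gamma')} \geq \mu$, and has size at most $\abs{\Gamma} + \mu$.

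The main obstacle is the parallel chain-length identification: one must argue that the naive coefficient test $\Lambda_{P_k} = E_k$ faithfully realizes the semantic condition $\epsilon(\Lambda_{P_k}) = k$. Uniqueness of the csdr together with the chain-separation bound above handles this for every $k$ within the current chain's representational reach, and since only the \emph{smallest} failing $k$ enters the definition of $\ell$, any accidental coincidence at larger $k$ is harmless. Everything else---parallel construction of new nodes, collision handling, merging---is a routine composition of the \Ac{0} primitives from \cref{thm:makeCompact} and \cref{lem:compareCompactMarkings} with standard \TC sorting and iterated addition.
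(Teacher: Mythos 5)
Your skeleton is the same as the paper's (new chain nodes carry the csdr of their index as successor marking, coincidences with existing nodes are detected and merged, and everything is sorted at the end via \cref{lem:compareCompactMarkings}), and your determination of $\ell=\abs{\mathcal{C}_0(\Gamma)}$ via uniqueness of compact markings is sound. The genuine gap is in the collision-handling step, which is exactly the part the paper spends most of its proof on. To decide whether an old node $P_i$ duplicates the value $q^j$ of a prospective new node $R_j$, you compare $\Lambda_{P_i}$ with $E_j$ coefficient-by-coefficient. But $\Lambda_{P_i}$ is a marking on $\Gamma$, whereas $E_j$ may have support on new nodes $R_{i'}$ with $\ell \leq i' < \lambda_j$. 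Whenever $j$ exceeds $\floor{q^{\ell+1}/(q^2-1)}$ (the largest value representable on the original initial chain, \cref{lem:maxCQsdr}), an old node of value $q^j$ must represent the exponent $j$ using old nodes \emph{outside} the initial chain, so the literal test against the un-redirected $E_j$ fails and the collision goes undetected; $\Gamma'$ then contains two nodes of value $q^j$ and is not reduced. Concretely, take $q=2$, a chain of values $1,2,4,8$, a node $P_4$ of value $2^{10}$ with $\Lambda_{P_4}=P_1+P_3$, and a node $P_5$ of value $2^{1022}$ with $\Lambda_{P_5}=P_4-P_1$; for $\mu\geq 1023$ your $E_{1022}$ is supported on $R_{10}$ rather than $P_4$, so the test misses that $R_{1022}$ duplicates $P_5$. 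If you instead intend $E_j$ \emph{after} redirection, then resolving position $j$ presupposes that all positions in $\supp(E_j)$, \ie positions $<\lambda_j\approx\log_q j$, have already been resolved; unrolling this gives a dependency chain of depth $\Theta(\log^*\mu)$, which is not constant, so a single parallel round (or any constant number of rounds) does not suffice for a \TC bound.

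The paper breaks this circularity with a device you are missing: it first builds the chain only up to length $\nu=\ceil{\log_q\mu}$ by \emph{guessing} the set $X\sse\interval{0}{\nu}$ of positions that coincide with old nodes~-- there are only $2^{\nu+1}=\mathrm{poly}(\mu)$ candidate sets, each verifiable in parallel precisely because the guess itself supplies the identification between the two circuits needed to compare successor markings across them~-- and only afterwards adds positions $\nu+1,\ldots,\mu$, whose successor markings live entirely on the already-resolved length-$\nu$ prefix, so that from then on direct comparison of compact markings on a common reduced power circuit is valid. You need this guessing step (or an equivalent constant-depth resolution of the first $\nu$ positions) for your construction to go through.
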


\begin{proof}
	Writing $\nu = \ceil{\log_q \mu}+1$, we know that every node to be added for the desired chain $\mathcal{C}_0(\Gamma')$ has a successor marking using only the first $\nu$ nodes of $\mathcal{C}_0(\Gamma')$. 
	Thus, as a first step for proving \cref{cor:findChains}, we will extend $\mathcal{C}_0(\Gamma)$ to have length at least $\nu$.
	
	We start by creating a new reduced power circuit $\Delta= (Q_0, \dots, Q_\nu)$ such that $\eps(Q_i) = \pcBase^i$: We  write every number $i \in \interval{0}{\nu}$ as its compact base-$q$ signed-digit representation.  Then,  $\Lambda_{Q_i}(Q_j)$  is exactly the $j$-th digit of the representation of $i$ and, thus, defines the successor marking of the node $Q_i$. This is clearly possible in $\uTC{0}$. 
	
	\newcommand{\numI}[1]{\lambda(#1)}
	Next, we wish to merge $\Delta$ with our input power circuit $\Gamma = (P_0, \dots, P_{n-1})$. In order to do so, we ``guess'' a subset $X \sse \interval{0}{\nu}$ (note that there are at most $2^{\nu+1}$ guesses, so they can be checked all in parallel). 
	For $i\in X$ we write $\numI{i} = j-1$ if $i$ is the $j$-th element in the sorted order of $X$ (\ie $\numI{i} = \abs{\set{j \in X}{j < i}}$). Note that sorting and counting is possible in $\uTC{0}$.
	
	For the guessed set $X$ we want to check whether $\eps(Q_i) = \eps(P_{\numI{i}})$ for all $i \in X$ holds and whether $X$ is maximal with this property, \ie the intuition behind $X$ is that it comprises all nodes of $\Delta$ which are also present in $\Gamma$. Note that, because $\Gamma$ is reduced,  if $\abs{\Gamma} \geq 2$ then $\eps(P_i)=\eps(Q_i)$ for $i=1,2$. 
	
	To verify whether $\eps(Q_i) = \eps(P_{\numI{i}})$ holds  for all $i \in X$, ideally, we would check whether $\Lambda_{Q_i} = \Lambda_{P_{\numI{i}}}$.
	However, this is not possible as the successor markings are on different power circuits. Instead, for all $i \in X$ we check whether
		\begin{itemize}
			\item $\Lambda_{Q_i}(Q_j) = \Lambda_{P_{\numI{i}}}(P_{\numI{j}})$ for all $j\in X$,
			\item $\Lambda_{Q_i}(Q_j) = 0$ for all $j \not\in X$,
			\item $\Lambda_{P_{\numI{i}}}(P_{k}) = 0$ for all $k \not\in \numI{X}$. 
		\end{itemize}
	Note that this clearly can be done in $\uTC{0}$. 
	Now, if these conditions are satisfied, it follows by induction on $i$ that $\eps(Q_i) = \eps(P_{\numI{i}})$ holds for all $i \in X$. On the other hand, as compact markings are unique,  $\eps(Q_i) = \eps(P_{\numI{i}})$ implies that also these conditions are satisfied.
		
	To check whether $X$ is maximal with $\eps(Q_i) = \eps(P_{\numI{i}})$ for all $i \in X$, we proceed as follows: if $X$ is not maximal, there are $Q_i \in \Delta$  for $i \not\in X$ and $P_j \in \Gamma$ with $\eps(Q_i) = \eps(P_j)$. Taking such $Q_i$ of smallest evaluation, it follows that their successor markings are the same  (when identifying $Q_k$ with $P_{\numI{k}}$ for $k \in X$) and use only nodes from $\set{Q_k}{k \in X}$. Hence, it can be checked in  $\uTC{0}$ whether $X$ is (not) maximal. 	
	
	From now on, let us assume that we already have computed the maximal $X\sse \interval{0}{\nu}$ with $\eps(Q_i) = \eps(P_{\numI{i}})$ for all $i \in X$. 
	Now, we have to integrate all nodes $Q_k$ for $k\in \interval{0}{\nu} \setminus X$ into $\Gamma$. 
	Notice that for each node $P_j \in \Gamma$ with $\eps(P_j) \leq \pcBase^\nu$ there is a node $Q_i \in \Delta$ with $\eps(Q_i) = \eps(P_j)$ and $i \in X$; moreover, $i$ can be computed via $\numI{i} = j$. Therefore, we can compare values of nodes $P_j$ with $Q_k$ by comparing the respective successor markings in $\Delta$. 
	This allows us to insert the nodes $Q_k$ for $k\in \interval{0}{\nu} \setminus X$ at their correct position into $\Gamma$ and to define their successor markings on $\Gamma$. 
	Thus, we obtain a reduced power circuit $(\Gamma^\bullet, \delta^\bullet)$ such that for each $i\in \interval{0}{\nu} $ there is some node of evaluation $\pcBase^i$.

	Finally, to also have nodes of value $\pcBase^i$ for each $i\in \interval{\nu+1}{\mu} $, we compute the compact base-$\pcBase$ signed-digit representation of each such $i$ (note that we need only at most $\nu+1$ digits for that) and create a new node with the respective successor marking. 
	Again as the successor markings are already present in $(\Gamma^\bullet, \delta^\bullet)$, we can compare the new nodes with existing nodes in $\Gamma^\bullet$ and insert them at the right position if necessary (see \cite[Lemma 4.13]{MattesWCC23}, \proc{InsertNodes}). As above, this clearly is in $\uTC{0}$. 
\qed\end{proof}

In \cite{MattesWCC23} we proved that we can reduce a power circuit $(\Pi, \delta_{\Pi})$ using \Tc{} circuits of depth $\Oh(\depth(\Pi))$ (\PTc{0}{D} parametrized by $\depth(\Pi)$). 
 In this paper, the non-reduced  power circuits are   ``almost reduced'': $\Pi=(\Gamma\cup\Xi, \delta)$ with $\Gamma\cap \Xi=\emptyset$ and $(\Gamma, \delta_{\Gamma})$ is a reduced power circuit. If $P \in \Xi$, then $\Lambda_P$ is a compact marking on $\Gamma$. Reducing such a power circuit is possible in $\uTC{0}$. 
 We will adapt the operations on reduced power circuits like addition or multiplication by a power of $q$ such that we only obtain almost reduced power circuits as intermediate results and include their reduction in the construction of the new markings.

We borrow the following two lemmas from \cite{MattesWCC23}. While \cite{MattesWCC23} treats only power circuits to base 2, here we allow an arbitrary base $\pcBase \geq 2$. On the other hand, we state the first lemma for a special case where the power circuits are already ``almost'' reduced. Its proof is actually verbatim the same for $q\geq2$. For the second Lemma we indicate the small differences in the proof. In \cite{MattesWCC23} both Lemmas are stated in the non-uniform case. But for the proof of the first one we only need  comparison of compact markings, sorting and counting.  These are all possible in $\uTC{0}$. 
\begin{lemma}[{\normalfont\stepone{}\bfseries}, {\cite[Lemma~4.14]{MattesWCC23}}]
\label{stz:insertminU}
The following is in $\uTC{0}$: 
\compproblem{A power circuit  $(\Gamma \cup \Xi, \delta)$ as above.}
{ A reduced power circuit $(\Gamma', \delta')$  such that
\begin{itemize}
	\item for each $ Q \in \Xi$ there is a node $P \in \Gamma'$ with $\epsilon(P) = \epsilon(Q)$,
	\item $(\Gamma, \delta|_{\Gamma \times \Gamma}) \leq (\Gamma', \delta')$,
	\item $\abs{\Gamma'}\leq \abs{\Gamma}+\abs{\Xi}$, and 
	\item $\abs{\mathcal{C}_{\Gamma'}}\leq \abs{\mathcal{C}_{\Gamma}}+\abs{\Xi}$.
\end{itemize} 
}
\end{lemma}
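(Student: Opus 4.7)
The plan is to detect the nodes of $\Xi$ that are duplicates (either of some node of $\Gamma$ or of another node of $\Xi$), discard them, and insert the remaining ones at their correct position in the sorted order of $\Gamma$ by evaluation. The whole construction will hinge on the observation that every node of $\Gamma \cup \Xi$ already carries a compact successor marking on the reduced sub-power circuit $(\Gamma, \delta|_{\Gamma \times \Gamma})$. By the uniqueness of compact base-$\pcBase$ signed-digit representations (\cref{lem:compareCQSDR}, applied chain-by-chain on $\Gamma$), we have $\epsilon(P) = \epsilon(Q)$ if and only if $\Lambda_{P} = \Lambda_{Q}$ as functions on $\Gamma$. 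Thus equality of evaluations reduces to a coordinate-wise comparison of the successor-marking tables, which is in \Ac{0}, and $\epsilon(P) \leq \epsilon(Q)$ is in \Ac{0} by \cref{lem:compareCompactMarkings}.

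Using this comparator in parallel for all pairs, for each $P \in \Xi$ we flag $P$ as redundant if there exists either some $Q \in \Gamma$ with $\epsilon(P) = \epsilon(Q)$, or some $Q \in \Xi$ preceding $P$ in a fixed input order with $\epsilon(P) = \epsilon(Q)$. Let $\Xi' \sse \Xi$ be the set of remaining (non-redundant) nodes and set $\Gamma' := \Gamma \cup \Xi'$, keeping the original edges: nodes of $\Gamma$ retain their successor markings (still compact on $\Gamma' \supseteq \Gamma$, since no evaluation changes), and each $P \in \Xi'$ keeps its successor marking on $\Gamma \sse \Gamma'$, which is compact by assumption. All evaluations in $\Gamma'$ are pairwise distinct by construction, and every evaluation $\epsilon(Q)$ for $Q \in \Xi$ is realized by some node of $\Gamma'$. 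Finally, we list the nodes of $\Gamma'$ in ascending order of evaluation by computing the rank of each node with iterated addition; this is the only step that genuinely needs \TC (via \cref{ex:itAdd}).

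For the quantitative bounds, $\abs{\Gamma'} = \abs{\Gamma} + \abs{\Xi'} \leq \abs{\Gamma} + \abs{\Xi}$ is immediate. For the chain bound, inserting a single new node of evaluation $\pcBase^{m}$ into a reduced power circuit can increase the number of maximal chains by at most one: depending on whether neither, exactly one, or both of the neighbouring evaluations $\pcBase^{m-1}, \pcBase^{m+1}$ are already present, the new node forms a new singleton chain ($+1$), extends an existing chain ($0$), or merges two chains into one ($-1$). Iterating over $\Xi'$ yields $\abs{\mathcal{C}_{\Gamma'}} \leq \abs{\mathcal{C}_{\Gamma}} + \abs{\Xi'} \leq \abs{\mathcal{C}_{\Gamma}} + \abs{\Xi}$.

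The only delicate point is the reduction of the equality test $\epsilon(P) = \epsilon(Q)$ to the plain equality $\Lambda_{P} = \Lambda_{Q}$ of the successor markings; once this is available in \Ac{0}, the rest of the procedure amounts to flagging duplicates and running a standard \TC{} sort, so that the overall depth stays in \TC independently of the depth of the input dag.
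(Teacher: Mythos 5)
Your proof is correct and follows essentially the same route as the paper, which defers to \cite[Lemma~19]{MattesW21} and notes that under the extra hypothesis all of $\Xi$ can be inserted at once: deduplicate via equality of compact successor markings on $\Gamma$ (justified by uniqueness of compact representations), insert the survivors in sorted order using \TC counting, and bound the chains by the observation that each inserted node creates at most one new maximal chain. The only cosmetic remark is that uniqueness of $\epsilon(\Lambda_P)$ for compact markings is a global property of the reduced power circuit (zero-padding across chain gaps yields a single compact signed-digit representation), not literally a chain-by-chain application of \cref{lem:compareCQSDR}, but this is exactly what \cref{lem:compareCompactMarkings} encapsulates.
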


\newcommand{\prolongate}{\mu}
\begin{lemma}[{\normalfont\steptwo{}\bfseries}, {\cite[Lemma~4.15]{MattesWCC23}}] \label{stz:extendChains}
	The following is in $\uTC{0}$:
	\compproblem{A reduced power circuit $(\Gamma, \delta)$ and $\mu \in \mathbb{N}$ given in unary.}{ A reduced power circuit $(\Gamma',\delta')$ such that 
		\begin{itemize}
			\item for each $P \in \Gamma$ and each $i \in \interval{0}{\prolongate} $ there is a node $Q \in \Gamma'$ with $\epsilon(\Lambda_{Q}) = \epsilon(\Lambda_{P})+i$,
			\item $(\Gamma, \delta) \leq (\Gamma',\delta')$, 
			\item $\abssmall{\Gamma'} \leq \abs{\Gamma}+\abs{\mathcal{C}_{\Gamma}}\cdot\prolongate$, and
			\item $\abssmall{\mathcal{C}_{\Gamma'}} \leq \abs{\mathcal{C}_{\Gamma}} $.
	\end{itemize}}
\end{lemma}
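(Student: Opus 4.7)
The plan is to build $\Gamma'$ by adding, for each chain $C \in \mathcal{C}_{\Gamma}$ and each $k \in \interval{1}{\mu}$, a new node $Q_{C,k}$ with successor marking $M_{C,k}$ evaluating to $\epsilon(\Lambda_{\lastc{C}}) + k$. Each chain $C$ is thereby lengthened by $\mu$ positions above $\lastc{C}$, so the required node of successor marking value $\epsilon(\Lambda_{P}) + i$ exists in $\Gamma'$ for every $P \in \Gamma$ and every $i \in \interval{0}{\mu}$ (either as an existing chain node of $P$, or as a newly added $Q_{C,k}$).

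First, I apply \cref{lem: longchain} to extend the initial chain $C_{0}$ of $\Gamma$ so that (i) compact base-$q$ signed-digit representations of all integers in $\interval{0}{\mu}$ are supported within $C_{0}$, and (ii) adding such an integer to any compact marking supported on $C_{0}$ yields a value whose compact form still fits on $C_{0}$. Call the resulting reduced power circuit $\Gamma^{*}$ and its extended initial chain $C_{0}^{*}$ of length $\ell_{0}^{*}$.

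Next, for each $C \in \mathcal{C}_{\Gamma^{*}}$ and each $k$, I compute $M_{C,k}$ in parallel as follows. Decompose $\Lambda_{\lastc{C}} = \Lambda^{\text{low}} + \Lambda^{\text{high}}$ by restricting to $C_{0}^{*}$ and to its complement, respectively, and let $R_{k}$ be the compact signed-digit representation of $k$ (supported at the bottom of $C_{0}^{*}$). Apply \cref{thm:makeCompact} (in $\Ac{0}$) to compactify the digit-wise sum $\Lambda^{\text{low}} + R_{k}$, obtaining a compact marking $N_{C,k}$ on $C_{0}^{*}$. Set $M_{C,k} = \Lambda^{\text{high}} + N_{C,k}$: then $\epsilon(M_{C,k}) = \epsilon(\Lambda_{\lastc{C}}) + k$ and $M_{C,k}$ is compact, since each summand is compact individually and their supports are separated by a gap of at least $2$ in successor marking value (the support of $N_{C,k}$ lies at positions $\leq \ell_{0}^{*} - 1$, while that of $\Lambda^{\text{high}}$ lies at positions $\geq \ell_{0}^{*} + 1$ by maximality of $C_{0}^{*}$). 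Finally, I insert all new nodes into $\Gamma^{*}$ by a single invocation of \stepone{} (\cref{stz:insertminU}); collisions among new nodes or between new and existing nodes (arising when extensions of distinct chains meet) are merged, possibly decreasing the chain count. All three steps run in \TC.

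The main obstacle is respecting the size bound $\abs{\Gamma'} \leq \abs{\Gamma} + \abs{\mathcal{C}_{\Gamma}} \cdot \mu$: the extension of $C_{0}$ in the first step may in principle cost up to $\mu$ additional nodes, appearing to leave insufficient budget for the other chains. A careful accounting balances this cost against the chain mergers triggered by the extension, which reduce $\abs{\mathcal{C}_{\Gamma^{*}}}$ compared to $\abs{\mathcal{C}_{\Gamma}}$ and thereby reduce the number of chains needing their own $\mu$-node extensions. The same merging also yields $\abs{\mathcal{C}_{\Gamma'}} \leq \abs{\mathcal{C}_{\Gamma}}$.
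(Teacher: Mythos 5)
Your construction follows the same skeleton as the paper's proof: first lengthen the initial chain $C_0$ via \cref{lem: longchain}, then append to each maximal chain $\mu$ nodes whose successor markings are obtained by splitting $\Lambda_{\lastc{C}}$ into its restriction to the extended initial chain plus the rest, recompactifying only the low part with \cref{thm:makeCompact}, and inserting everything with \stepone. The compactness argument for the recombined marking (supports separated by an exponent gap of at least $2$) is exactly the paper's, and that part is fine.

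The genuine gap is the bound $\abs{\Gamma'}\leq\abs{\Gamma}+\abs{\mathcal{C}_{\Gamma}}\cdot\mu$, which you explicitly leave to ``a careful accounting'' whose proposed mechanism does not work. You extend $C_0$ only far enough to host the csdr of $\mu$ and absorb the carry, i.e.\ by roughly $\ceil{\log_q\mu}+\Oh(1)$ fresh nodes; such a short extension need not trigger any chain merger, so there is nothing for the mergers to pay for. If step 2 then appends $\mu$ nodes above $\lastc{C}$ for every $C\in\mathcal{C}_{\Gamma^{*}}$ (as your detailed description says), the already-extended $C_0^{*}$ receives a full extra $\mu$-extension on top of the step-1 nodes and the bound is exceeded by the cost of step 1. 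If instead you append above the tops of the \emph{original} chains (as your opening paragraph says), the overlap with the step-1 nodes rescues you -- but that is a deduplication effect, not a merger effect, and it still fails for small $\mu$, when $\ceil{\log_q\mu}+2>\mu$. The paper closes this by choosing the extension length differently: it prolongs $C_0$ to a length $L\in\interval{1}{(\mu+1)\cdot\abs{\Gamma}}$ (all tried in parallel) such that the last $\mu$ positions of $\tilde C_0$ are \emph{not} already present in $\Gamma$. That single condition does everything at once: (a) the restrictions $\Lambda_{\lastc{C}}|_{\tilde C_0}$ of original successor markings are supported below the top $\mu$ positions, so adding $i\leq\mu$ cannot overflow $\tilde C_0$ -- note that your condition (ii) is, read literally on the extended chain, unsatisfiable, since a compact marking on $C_0^{*}$ may already attain the maximum of \cref{lem:maxCQsdr}, and $\Lambda^{\text{low}}$ is a restriction to $C_0^{*}$, not to the original $C_0$; (b) $\tilde C_0$ needs no further extension in step 2, because the $\mu$ exponents above the top of every absorbed chain already lie inside $\tilde C_0$; and (c) minimality of $L$ forces every window of $\mu$ consecutive exponents below $L-\mu$ to contain an original node, which bounds the number of step-1 nodes by $\mu$ times the number of absorbed chains and yields the stated bound exactly (and likewise $\abs{\mathcal{C}_{\Gamma'}}\leq\abs{\mathcal{C}_{\Gamma}}$, since every new node contiguously extends an existing chain). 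Without an argument of this kind your proof gives the size bound only up to an additive $\Oh(\log_q\mu)$ and hence does not prove the lemma as stated.
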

Since we changed the statement slightly, we indicate the differences in the proof.
In \cite[Lemma~4.15]{MattesWCC23} we still had the technical condition that \small$\mu \leq \floor{\frac{2^{\abs{C_{0}(\Gamma)}+1}}{3}}  $ \normalsize (compare to \cref{lem:maxCQsdr} for $\base=2$). By using \cref{lem: longchain} we do not need this condition anymore. Moreover, in \cite{MattesWCC23} it was stated in the non-uniform case. 

\begin{proof}
We first prolongate $C_0$ to a new chain $\tilde C_0$ such that the last $\mu$ nodes of  $\tilde C_0$ are not already present in $\Gamma$. This can be done \eg by applying \cref{lem: longchain} to prolongate $C_0$ to length $i$ for all $i \in \interval{1}{(\mu + 1)\cdot \abs{\Gamma}}$ and checking the number of newly introduced nodes at the end of the chain. This is possible in $\uTC{0}$ because we  construct polynomially many power circuits of polynomial size. For each chain, we either introduced $\mu$ new nodes at the end or we merged two maximal chains. We choose $i$ minimal such that there are at least $\mu$ new nodes at the end of the new $C_0$ and discard the other circuits. 
	
 This replaces Step 1 in the proof of \cite[Lemma~4.15]{MattesWCC23}. Now, we can proceed exactly as in Step 2 of this proof~-- except that we do not prolongate the chain $\tilde C_0$ any more. Note that the operations we need like addition, comparison and making a \sdr compact are all in $\uTC{0}$.

For compact markings $L,M$ on $\Gamma$, it should be clear that  $\cor(\epsilon(M|_{C_0})+\mu)$ can be represented as a compact marking on $\tilde C_0$; thus, using \comp (\cref{lem:compareCompactMarkings}) we can check whether $\epsilon(L)\leq \epsilon(M)+\mu$ in \uAC{0} and create new nodes with successor markings of value $\epsilon(M)+i$ for $i \in \interval{1}{\mu}$ if necessary. 
\qed\end{proof}

On input of a \redpc for $k,\ell$ we want to construct a \redpc for $m=k+\ell$ or $m=k \cdot q^{\ell}$. If we proceed as  in  \cite[Lemma~4.9]{MattesWCC23}, the power circuit representing $m$ will not be (almost) reduced in general, even if we start with a reduced power circuit.  We adapt these operations such that we  obtain (almost) reduced power circuits as intermediate results. Thus the construction of a \redpc for $m$ is in $\uTC{0}$.  
Moreover, we show that we can do several of these operations on the \emph{same} power circuit in parallel, which will be crucial in our applications. 
\begin{lemma}[\normalfont\addition]\label{lem: add}
The following is possible in $\uTC{0}$: 
\compproblem{A reduced power circuit $(\Gamma, \delta)$ with compact markings $L^{(i)}_j $ on $\Gamma$ for $i\in \interval{1}{\ell}$, $j\in \interval{1}{k}$.   } 
{A reduced power circuit $(\Gamma', \delta')$ and compact markings $M^{(i)}$ on $\Gamma'$  with  $\epsilon(M^{(i)})=\epsilon(L^{(i)}_1)+\cdots + \epsilon(L^{(i)}_{k})$ for $i\in \interval{1}{\ell}$ and
\begin{itemize}
 \item $(\Gamma, \delta)  \leq (\Gamma', \delta')$,
\item $\abs{\Gamma'} \leq \abs{\Gamma}+ \ceil{\log_\pcBase(k)}\cdot \abs{\mathcal{C}_{\Gamma}} $,
 \item $\abs{\mathcal{C}_{\Gamma'}}\leq\abs{\mathcal{C}_{\Gamma}}$,
 \item $\abssmall{\sigma(M^{(i)})}\leq \sum_{j=1}^{k}\abssmall{\sigma(L^{(i)}_j)}$ for each $i \in \interval{1}{\ell}$.
\end{itemize}
 }%
\end{lemma}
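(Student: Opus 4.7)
The plan is to enlarge the chains of $\Gamma$ once, uniformly for all $i$, so as to provide enough room for the carries produced by the summations, and then to carry out all $\ell$ sums in parallel in a SIMD fashion: chain by chain, perform iterated signed-digit addition and then compactify the result using \cref{thm:makeCompact}.

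First, set $\mu = \ceil{\log_\pcBase k}$ and apply \steptwo (\cref{stz:extendChains}) to $(\Gamma, \delta)$ with parameter $\mu$. This yields a reduced power circuit $(\Gamma', \delta')$ with $(\Gamma, \delta) \leq (\Gamma', \delta')$, $\abs{\maxchains{\Gamma'}} \leq \abs{\maxchains{\Gamma}}$, and $\abs{\Gamma'} \leq \abs{\Gamma} + \mu \cdot \abs{\maxchains{\Gamma}}$, extending each maximal chain at its top by $\mu$ nodes. Since $(\Gamma, \delta) \leq (\Gamma', \delta')$, every $L^{(i)}_j$ is still a compact marking on $\Gamma'$.

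For every maximal chain $C = (P_0, \ldots, P_{\ell_C-1}) \in \maxchains{\Gamma'}$ and every $i \in \interval{1}{\ell}$, I would form the $k$ signed-digit strings $\binM{L^{(i)}_j}{C}$ for $j \in \interval{1}{k}$ and take their componentwise sum; the result is a sequence with entries in $\interval{-k(\pcBase-1)}{k(\pcBase-1)}$. Converting this sequence into a standard \sdrAbbr with digits in $\interval{\mOne\pcBase+1}{\pcBase-1}$ amounts to a base-$\pcBase$ iterated addition and is in \TC by \cref{ex:itAdd} (after splitting the string into its positive and its negative parts). By \cref{lem:maxCQsdr} and the choice of $\mu$, the result fits on the extended chain. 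Feeding each chain's digit string into \cref{thm:makeCompact} (\Ac{0}) then produces its \cQsdrAbbr, and we do this in parallel over all pairs $(i, C)$. Reassembling the chain-wise compact strings into markings on $\Gamma'$ yields the required $M^{(i)}$, with $\eps(M^{(i)}) = \sum_{j=1}^k \eps(L^{(i)}_j)$ holding chain by chain.

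The size, chain-count, and sub-power-circuit bounds are inherited directly from \cref{stz:extendChains}. The total algorithm is a constant-depth composition of \cref{stz:extendChains}, iterated addition, and \cref{thm:makeCompact}, so it lives in \TC. The main obstacle I anticipate is the support bound $\abs{\sigma(M^{(i)})} \leq \sum_{j=1}^k \abs{\sigma(L^{(i)}_j)}$: on each chain one has to argue that carry propagation followed by compactification produces at most as many nonzero compact digits as there are nonzero digits in the $k$ summands restricted to that chain. I would establish this chain by chain, either by induction on $k$ via pairwise merges (verifying that the bound is preserved under a single merge of two compact segments), or by directly bounding the compact length of the restricted sum via \cref{lem:maxCQsdr} together with the support-based magnitude bounds on the individual restrictions $L^{(i)}_j$ to $C$.
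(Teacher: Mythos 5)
Your proposal matches the paper's proof essentially step for step: apply \steptwo with $\mu=\ceil{\log_\pcBase k}$, then on each maximal chain form $\cor\bigl(\sum_{j}\binM{L^{(i)}_j}{C}\bigr)$ via iterated addition (\cref{ex:itAdd}) and \cref{thm:makeCompact}, with the room for carries justified by \cref{lem:maxCQsdr} and all size bounds inherited from \cref{stz:extendChains}. The support bound you flag as the main obstacle is simply asserted as clear in the paper (it is the minimal-weight property of compact signed-digit representations), so your extra care there goes beyond, but does not diverge from, the paper's argument.
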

\begin{proof}
We first apply $\steptwo(\ceil{\log_\pcBase(k)})$. We obtain a reduced power circuit $(\Gamma', \delta')$ containing the markings $L^{(i)}_j$. Let $C'$ be a maximal chain in $(\Gamma', \delta')$. Then the following is well-defined: 
\begin{align}\label{RedAdd}
\binM{M^{(i)}}{C'}=\cor\left(\sum_{j=1}^{k}\binM{L^{(i)}_j}{C'}\right).
\end{align}
Note that the last $\ceil{\log_\pcBase(k)}$ nodes of $C'$ are not marked by any $L^{(i)}_j$. Therefore, by \cref{lem:maxCQsdr}, we have \[\sum_{j=1}^{k}\val(\binM{L^{(i)}_j}{C'}) \leq k \cdot \floor{\frac{\base^{\abssmall{C'}-\ceil{\log_\pcBase(k)}+1}}{\base^2-1}} \leq \floor{\frac{\base^{\abssmall{C'}+1}}{\base^2-1}}.\] 
Therefore, the right side of \eqref{RedAdd}, indeed, is a compact representation using only $\abs{C'}$ digits.

 Finally, if we define $M^{(i)}$ like this on all maximal chains, it is clear that $M^{(i)}$ is a compact marking and that $\epsilon(M^{(i)})=\epsilon(L^{(i)}_1)+\cdots +\epsilon(L^{(i)}_{k})$.  
It is also clear that $\abssmall{\sigma(M^{(i)})}\leq \sum_{j=1}^{k}\abssmall{\sigma(L^{(i)}_j)}$.  \cref{stz:extendChains} shows that $(\Gamma, \delta)\leq (\Gamma', \delta')$ as well as the size conditions of \cref{lem: add}. Because base-$q$ iterated addition is in $\uTC{0}$ (see \cref{ex:itAdd}), with \cref{thm:makeCompact} and \cref{stz:extendChains} it follows that the right side in (\ref{RedAdd}) can be constructed in $\uTC{0}$.  This shows the lemma. 
\qed\end{proof}

\begin{lemma}\label{lem: powtwo}\label{lem: floatpOp}%
	The following is possible in $\uTC{0}$:
	\begin{enumerate}
	\vspace*{-1.2mm}	\item\label{powtwo} \powertwo:
			\compproblem{A reduced power circuit $(\Gamma, \delta)$ with compact markings $K^{(i)}, L^{(i)} $ on $\Gamma$ for $i \in \interval{1}{\ell}$ such that $\epsilon(K^{(i)})\cdot\pcBase^{\epsilon(L^{(i)})} \in \Z$.}
			{
				A reduced power circuit $(\Gamma', \delta')$ with compact markings $M^{(i)}$ such that $\epsilon(M^{(i)})=\epsilon(K^{(i)})\cdot\pcBase^{\epsilon(L^{(i)})}$ for  $i \in \interval{1}{\ell}$.
			}

		\item \label{MaxPow} \FloatPoint:
			\compproblem{A reduced power circuit $(\Gamma, \delta)$ with  compact markings $K^{(i)}$ for $i \in \interval{1}{\ell}$.}{A reduced power circuit $(\Gamma', \delta')$ with compact markings $U^{(i)}$, $E^{(i)}$ such that $\epsilon(K^{(i)})=\epsilon(U^{(i)})\cdot \pcBase^{\epsilon(E^{(i)})}$ with $\epsilon(U^{(i)}) = 0$ or $q \nmid  \epsilon(U^{(i)})$ for  $i \in \interval{1}{\ell}$.  
		}

	\end{enumerate}
\medskip

\noindent We have that $(\Gamma, \delta) \leq (\Gamma', \delta')$ and
\vspace*{-1.2mm}\begin{itemize} 
	\item $\abs{\Gamma'} \leq  \abs{\Gamma}+ \abs{\mathcal{C}_{\Gamma}} + \sum_{i=1}^{\ell} \abs{\sigma(K^{(i)})} $
	\item $\abs{\mathcal{C}_{\Gamma'}} \leq \abs{\mathcal{C}_{\Gamma}}+\sum_{i=1}^{\ell} \abs{\sigma(K^{(i)})}$
	\item $\abs{\sigma(M^{(i)})}= \abs{\sigma(K^{(i)})}$. i 
\end{itemize}
Notice that the  size of $\Gamma'$ and $\abs{\sigma(M^{(i)})}$ does not depend on $L^{(i)}$. 
\end{lemma}

\begin{proof}
	\newcommand{\rip}{R^{(i)}_{P}}
	\newcommand{\riq}{R^{(i)}_{Q}}
	\powertwo:	We start by applying $\steptwo(1)$. We denote the resulting reduced power circuit by $(\Gamma_1, \delta_1)$. Observe that $(\Gamma, \delta)\leq(\Gamma_1, \delta_1)$. Next, we apply a construction similar to the one in \cite[Lemma~4.9]{MattesWCC23}. For each $ i \in \interval{1}{\ell}$ and each node $P \in \sigma(K^{(i)})$ we construct a node $\rip$ as follows: Let $C$ be a maximal chain in $\Gamma_1$. As the last node of $C$ is neither marked by $\Lambda_{P}$  nor by $L^{(i)}$, by \cref{lem:maxCQsdr}, $\cor\left(\binM{\Lambda_{P}}{C}+\binM{L^{(i)}}{C}\right)$ uses at most $\abs{C}$ digits.
Thus,
	\[\binM{\Lambda_{\rip}}{C}=
	\cor\left(\binM{\Lambda_{P}}{C}+\binM{L^{(i)}}{C}\right)\] 
	is well-defined and can be computed in $\uTC{0}$ by \cref{thm:makeCompact}.
	We take the marking $\Lambda_{\rip}$ defined like that as the successor marking of $\rip$. Then 
	\[\epsilon(\rip) = \pcBase^{\epsilon(\Lambda_{\smash{\rip}})} = \pcBase^{\epsilon(\Lambda_P)+\epsilon(L^{(i)})}=\epsilon(P) \cdot \pcBase^{\epsilon(L^{(i)})}.\]

	We obtain a (not necessarily reduced) power circuit $(\Gamma_1 \cup \Xi, \delta)$, containing all the newly constructed nodes, with 
		\[\Xi = \set{\rip}{i \in \interval{1}{\ell}, P \in \sigma(K^{(i)})}.\]
	 All markings $\Lambda_{\rip}$ are on  $\Gamma_1$ and compact by construction. 
	Observe that $\abs{\Xi}=\sum_{i=1}^{\ell}\abssmall{\sigma(K^{(i)})}$.  
	We apply \stepone with input of $(\Gamma_1 \cup \Xi, \delta)$. By \cref{stz:extendChains} and \cref{stz:insertminU} the construction of the resulting reduced power circuit $(\Gamma', \delta')$ is possible in $\uTC{0}$ such that $(\Gamma, \delta)\leq (\Gamma', \delta')$ and such that the size conditions on $\abssmall{\Gamma'}$ and $\abssmall{\mathcal{C}_{\Gamma'}}$ in the lemma are satisfied. While applying \stepone, for each node $\rip$ we remember which node in $\Gamma'$ has the same evaluation. 
	\medskip
	
	Now we need to define the markings $M^{(i)}$ on $\Gamma'$. Let $Q \in \Gamma'$. If there exists a node $P \in \sigma(K^{(i)})$ such that $\epsilon(Q)=\epsilon(\rip)$ (\ie $\Lambda_Q=\Lambda_{\rip}$), then we set $M^{(i)}(Q)=K^{(i)}(P)$. Otherwise, we set $M^{(i)}(Q)=0$. 
	
	As $\epsilon(\rip) =  \epsilon(P) \cdot \pcBase^{\epsilon(L^{(i)})}$ for all $P \in \supp(K^{(i)})$, the marking $M^{(i)}$ is just a ``shift'' of $K^{(i)}$; thus, it is well-defined and compact.
	 Observe that $\abssmall{\sigma(M^{(i)})}=\abssmall{\sigma(K^{(i)})}$. We obtain that 
	\[\epsilon(M^{(i)})=\sum_{Q \in \Gamma'}M^{(i)}(Q)\cdot \epsilon(Q)=\sum_{P \in \sigma(K^{(i)})}K^{(i)}(P)\cdot \epsilon(P)\cdot\pcBase^{\epsilon(L^{(i)})}= \epsilon(K^{(i)})\cdot \pcBase^{\epsilon(L^{(i)})}. \]
	This proves part \ref{powtwo}.

	\paragraph{\FloatPoint:}
	Let $\sigma(K^{(i)})=\oneset{Q_1, \ldots, Q_{k} }$.  If $k=0$, then $\epsilon(K^{(i)})=0$, so we set $\epsilon(U^{(i)})=\epsilon(E^{(i)})=0$.   Now let $k \geq 1$. Because $(\Gamma, \delta)$ is reduced, we can assume that $\epsilon(Q_1)< \epsilon(Q_j)$ for all $j \in \interval{2}{k}$. Therefore, $u = \epsilon(K^{(i)}) \cdot \pcBase^{-\epsilon(\Lambda_{Q_1})}$ is integral but not divisible by $q$. We set  $E^{(i)}=\Lambda_{Q_1}$ and use \powertwo with input $K^{(i)}$ and $-E^{(i)}$ to  compute a marking $U^{(i)}$ with  $\eps(U^{(i)}) = u$. By the first part of the lemma, we can do this for all $i \in \interval{1}{\ell}$ in parallel.
\qed\end{proof}

\begin{lemma}\label{lem:AddMult}\
\begin{enumerate}[(a)]
	\item\label{SumNotInZ}  The following is in $\uTC{0}$:
	\ynproblem{A reduced power circuit $(\Gamma, \delta)$ with compact markings $K_j, L_j $ for $j \in \interval{1}{k}$.}{
		Is $\sum_{j=1}^{k}\epsilon(K_j)\cdot\pcBase^{\epsilon(L_j)} \in \Z$ ? 
	}
	
	\item \label{SumInZ} The following is in $\uTC{0}$:
	\compproblem{A reduced power circuit $(\Gamma, \delta)$ with compact markings $K_j^{(i)}, L_j^{(i)} $ for $i \in \interval{1}{\ell}$, $j \in \interval{1}{k}$ such that $\sum_{j=1}^{k}\epsilon(K_j^{(i)})\cdot\pcBase^{\epsilon(L_j^{(i)})} \in \Z$.}{	A reduced power circuit $(\Gamma', \delta')$ with compact markings $M^{(i)}$ such that $\epsilon(M^{(i)})=\sum_{j=1}^{k}\epsilon(K_j^{(i)})\cdot\pcBase^{\epsilon(L_j^{(i)})}$ for  $i \in \interval{1}{\ell}$. There is a constant $c$ such that $(\Gamma, \delta) \leq (\Gamma' \delta')$ and 
		\begin{itemize} 
			\item $\abs{\Gamma'} \leq  \abs{\Gamma}+ c \cdot \log_{q}(k) \cdot (\abs{\mathcal{C}_{\Gamma}} + \sum_{j=1}^{k}\sum_{i=1}^{\ell} \abssmall{\sigma(K_j^{(i)})}) $
			\item $\abs{\mathcal{C}_{\Gamma'}} \leq \abs{\mathcal{C}_{\Gamma}}+c \cdot \sum_{j=1}^{k}\sum_{i=1}^{\ell} \abssmall{\sigma(K_j^{(i)})}$
			\item $\abssmall{\sigma(M^{(i)})}\leq \sum_{j=1}^{k}\abssmall{\sigma(K_j^{(i)})}$. \end{itemize}	
	}

\end{enumerate}	
\end{lemma}	
\begin{proof}
For both parts, we essentially use the same computation: The following can be done for each $i \in \interval{1}{\ell}$ in parallel: We define $j_0$ such that $\eps(L^{(i)}_{j_0})=\min\{\eps(L^{(i)}_j)~|~1\leq j \leq k\}	$. Then, $\eps(L_j^{(i)})-\eps(L_{j_0}^{(i)})\geq 0$ for all $j \in \interval{1}{k}$.
 In particular, $\epsilon(K_j^{(i)})\cdot\pcBase^{\epsilon(L_j^{(i)})-\eps(L_{j_0}^{(i)})} \in \Z$ for all $j$.
 So, using \cref{lem: add} and \cref{lem: powtwo}, we can construct markings $N_j^{(i)}$ such that $\eps(N_j^{(i)})=\epsilon(K_j^{(i)})\cdot\pcBase^{\epsilon(L_j^{(i)})-\eps(L_{j_0}^{(i)})}$ for $j \in \interval{1}{k}$. Then, using \cref{lem: add} we can construct a compact marking evaluating to $\sum_{j=1}^{k}\eps(N_j^{(i)})$.

 For Part (\ref{SumNotInZ}) we now proceed as follows: If $\eps(L_{j_0})\geq 0$ we output ``yes''. Otherwise we can check if $q^{-\eps(L_{j_0})} \mid \sum_{j=1}^{k}\eps(N_j)$ using \cref{lem: powtwo}. If yes, we output ``yes'', otherwise we output ``no''.

Part (\ref{SumInZ}):  Since $q^{\eps(L_{j_0}^{(i)})}\cdot \sum_{j=1}^{k}\eps(N_j^{(i)}) \in \Z$ by  assumption, we can construct the marking $M^{(i)}$ with one more application of \cref{lem: powtwo}.
Note that each sum has length at most $k$ and each marking $K$ is  multiplied at most once by a power of $q$. Thus, the size constraints follow by \cref{lem: add} and \cref{lem: powtwo}. Since we use constantly many layers of \addition and \powertwo, it follows that the above algorithm is in $\uTC{0}$.  This shows Part (\ref{SumInZ}).\qed
\end{proof}

\begin{lemma}\label{lem: log}
	The following is in $\uTC{0}$:
	\compproblem{Markings $K,L$  in a reduced power circuit $(\Gamma, \delta)$ such that $\frac{\eps(K)}{\eps(L)}$ is a power of $q$.}{A marking $M$ in a reduced power circuit $(\Gamma', \delta')$ such that $\epsilon(M) = \log_q(\frac{\eps(K)}{\eps(L)})$, $\abs{\Gamma'} \leq \abs{\Gamma}+  \abs{\mathcal{C}_{\Gamma}}$,  $\abs{\mathcal{C}_{\Gamma'}}\leq \abs{\mathcal{C}_{\Gamma} }$. }
	\end{lemma}	
\begin{proof}
		Let $\sigma(K)=\oneset{Q_1, \ldots, Q_{k} }$ and $\sigma(L)=\oneset{R_1, \ldots, R_{\ell} }$. Because $(\Gamma, \delta)$ is reduced, we can assume that $\epsilon(Q_1)< \epsilon(Q_j)$ and $\epsilon(R_1)< \epsilon(R_j)$ for all $j \neq 1$. Then there are markings $U,V$ such that $\eps(K)=\eps(U)\cdot q^{\eps(\Lambda_{Q_1})}$, $\eps(L)=\eps(V)\cdot q^{\eps(\Lambda_{R_1})}$ and $q$ does not divide $\eps(U)$ and $\eps(V)$. Because $\frac{\eps(K)}{\eps(L)}$ is a power of $q$, $\eps(U)=\eps(V)$. Thus, $\eps(M)=\eps(\Lambda_{Q_1})-\eps(\Lambda_{R_1})$.  Now  \cref{lem: add} finishes the proof of the lemma.
	\qed\end{proof}

\begin{lemma}\label{lem:moduloPowerOfTwoConstant}
	Let $r \in \Z$ be a constant. The following problem is in $\uTC{0}$:
	\compproblem{
		A reduced power circuit $(\Gamma, \delta)$ with compact markings $K, L $.
	}
	{
		A reduced power circuit $(\Gamma', \delta')$ with a compact marking $M$ such that $\epsilon(M)=\epsilon(L) \bmod  r \cdot \pcBase^{\epsilon(K)}$.
	}
\end{lemma}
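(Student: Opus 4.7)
The plan is to decompose $L$ into a low part $L_<$ (nodes $P$ with $\epsilon(\Lambda_P) < \epsilon(K)$) and a high part $L_\geq$ (the rest), and to exploit that $\epsilon(L_\geq)\cdot\pcBase^{-\epsilon(K)}$ is an integer while $\epsilon(L_<)$ is bounded in absolute value by $\pcBase^{\epsilon(K)}$. The split itself is in \Ac{0} via \cref{lem:compareCompactMarkings} applied in parallel to compare each $\Lambda_P$ with $K$; both restrictions $L_<, L_\geq$ remain compact, as the compactness conditions only involve pairs of support nodes.

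Since $(\Gamma,\delta)$ is reduced, distinct nodes have distinct evaluations, so for $P \in \supp(L_<)$ the values $\epsilon(\Lambda_P)$ are distinct integers in $\interval{0}{\epsilon(K)-1}$. Together with $\abs{L(P)} \leq \pcBase-1$ this yields
\[
  \abs{\epsilon(L_<)} \;\leq\; (\pcBase-1) \sum_{i=0}^{\epsilon(K)-1} \pcBase^i \;=\; \pcBase^{\epsilon(K)} - 1.
\]
Writing $\epsilon(L_<) = \pcBase^{\epsilon(K)}\cdot a + b$ with $0 \leq b < \pcBase^{\epsilon(K)}$ therefore forces $a \in \{-1, 0\}$, with $a=0$ iff $\epsilon(L_<) \geq 0$; the sign of $\epsilon(L_<)$ is decided by \cref{lem:compareCompactMarkings}. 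Setting $q := \epsilon(L_\geq)\cdot\pcBase^{-\epsilon(K)} \in \Z$, this gives the identity $\epsilon(L) \bmod (\pcBase^{\epsilon(K)}\cdot r) \;=\; b + \pcBase^{\epsilon(K)}\cdot\bigl((a+q) \bmod r\bigr)$.

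To obtain $q' := q \bmod r$, I apply \powertwo (\cref{lem: powtwo}) with $K^{(1)} = L_\geq$ and $L^{(1)} = -K$; the precondition $\epsilon(L_\geq)\cdot\pcBase^{-\epsilon(K)} \in \Z$ holds by definition of $L_\geq$, so the result is a new reduced power circuit together with a compact marking $M'$ of value $q$. Invoking \cref{prop:modulo} on $M'$ with the fixed constant modulus $r$ returns $q' \in \interval{0}{r-1}$. Then let $d := ((a+q')\bmod r) - a \in \interval{0}{r}$, a constant determined from $q'$ and the sign bit of $\epsilon(L_<)$. I build $M := L_< + d \cdot \pcBase^{\epsilon(K)}$ by first extending $\mathcal{C}_0$ by a constant number of nodes via \cref{lem: longchain} so that a compact marking $D$ of constant value $d$ fits on the initial chain, then applying \powertwo with $(K^{(1)}, L^{(1)}) = (D, K)$ to produce a marking of value $d \cdot \pcBase^{\epsilon(K)}$, and finally \addition to combine it with $L_<$. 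A direct estimate gives $0 \leq \epsilon(M) \leq (\pcBase^{\epsilon(K)}-1) + \pcBase^{\epsilon(K)}(r-1) < \pcBase^{\epsilon(K)}\cdot r$, and $\epsilon(M) \equiv \epsilon(L) \pmod{\pcBase^{\epsilon(K)} r}$ by construction, so $\epsilon(M) = \epsilon(L) \bmod (\pcBase^{\epsilon(K)}\cdot r)$.

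The main obstacle is that neither piece of the decomposition alone determines the answer: the high piece requires reducing $q$ modulo $r$ \emph{inside} a power circuit (handled by combining \powertwo with \cref{prop:modulo}, crucially relying on $r$ being a constant), and the low piece can contribute a correction of $\pm 1$ past $\pcBase^{\epsilon(K)}$ (handled by the bound $\abs{\epsilon(L_<)} < \pcBase^{\epsilon(K)}$, which hinges on reducedness of the PC). Composing a constant number of \TC{}-subroutines (\cref{lem: longchain,lem: powtwo,lem: add,lem:compareCompactMarkings,prop:modulo}) then delivers the claimed \TC{} bound.
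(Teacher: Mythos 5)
Your proposal is correct, and it recombines the two halves of the computation differently from the paper. The paper also splits $L$ at the threshold $\epsilon(K)$, but then argues via the Chinese Remainder Theorem: it computes $\ell_K=\epsilon(L)\bmod \pcBase^{\epsilon(K)}$ (by zeroing out the high nodes of $L$) and $\ell_r=\epsilon(L)\bmod r$ (by \cref{prop:modulo}), and recovers the answer by testing all $r$ candidates $m_i=\ell_K+i\cdot\pcBase^{\epsilon(K)}$ against $\ell_r$, after first arranging w.l.o.g.\ that $\pcBase\nmid r$. You instead write $\epsilon(L)=b+\pcBase^{\epsilon(K)}(a+q)$ with $0\leq b<\pcBase^{\epsilon(K)}$, extract the integer quotient $q$ of the high part directly via \powertwo with exponent $-K$, reduce $a+q$ modulo $r$ by \cref{prop:modulo}, and reassemble. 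Your route buys two things: it needs no coprimality of $\pcBase^{\epsilon(K)}$ and $r$ (note that the paper's reduction to $\pcBase\nmid r$ does not by itself force $\gcd(\pcBase^{\epsilon(K)},r)=1$ when $\pcBase$ and $r$ share a prime factor, in which case the candidate $m_i$ need not be unique), and it explicitly handles the fact that the truncated signed-digit marking $L_<$ may be negative via the correction $a\in\{-1,0\}$~-- a point the paper's description of $\ell_K$ glosses over. The paper's version is shorter; both are compositions of constantly many \TC{} subroutines, so both meet the stated bound.
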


\begin{proof}
	We write $r=s\cdot x$ with $\gcd(s,q)=1$ and such that $s$ is maximal with this property. Then, $x ~|~q^a$ for some $a \geq 0$. Let $a\geq0$ be minimal such that $x ~|~ q^a$. Because $r$ is a constant, $s$, $x$, and $a$ are constants. 
Now, $s$ and $x \cdot q^{\eps(K)}$ are coprime; thus, by the Chinese Remainder Theorem, we may compute $\epsilon(L) \bmod  x \cdot\pcBase^{\eps(K)}$ and  $\epsilon(L) \bmod  s$ separately and then compose them.	

We start by computing $\epsilon(L) \bmod\pcBase^{a+\epsilon(K)}$ and then use the result to obtain $\epsilon(L) \bmod x \cdot\pcBase^{\epsilon(K)}$. 
To do so, for all $P \in \supp(L)$ we check  whether $\eps(\Lambda_{P}) < a+\eps(K)$ and create a new marking $L_a$ by setting $L_a(P) = L(P)$ if  $\eps(\Lambda_{P}) < a+\eps(K)$ and $L_a(P) = 0$ otherwise. By \cref{lem: longchain} and \cref{lem: add} we can construct a marking representing $a+\eps(K)$ in $\uTC{0}$. Also the comparison is in $\uTC{0}$ by \cref{lem:compareCompactMarkings}. Because $L$ is a compact marking in a reduced power circuit, by \cref{lem:maxCQsdr}, we obtain that  $\abs{\eps(L_a)}<  q^{\eps(K)+a}$. If $\eps(L_a) <0$, we set $\ell_a= q^{\eps(K)+a}+\eps(L_a)$, otherwise we set $\ell_a=\eps(L_a)$.
 Now, we subtract $x\cdot q^{\eps(K)}$ from $\ell_a$ until the result is smaller than $x\cdot q^{\eps(K)}$. This happens after a constant number $c$ of steps~-- hence, it can be done sequentially in a $\TC$ circuit.  Let $L_k$ be a marking with $\eps(L_k) = \ell_K = \ell_a- c\cdot x\cdot q^{\eps(K)}$. Then,  
 \[\ell_K \equiv \eps(L_a) \equiv \eps(L) \mod x \cdot q^{\eps(K)}.\]

For computing $\eps(P) \bmod s$, we proceed by induction:
As $\gcd(s, \pcBase) = 1$, we know that $\pcBase^{\phi(s)} \equiv 1 \mod s$, where $\phi$ is Euler's totient function. Therefore, 
\[\eps(P) = \pcBase^{\eps(\Lambda_{P})}  \equiv \pcBase^{\eps(\Lambda_{P}) \bmod \phi(s)} \mod s.\]
By induction, we know that $\eps(\Lambda_{P}) \bmod \phi(s)$ can be computed in $\uTC{0}$ (note that $\phi(s)$ might be non-coprime to $q$ anymore, but $\phi(s)<r$ is a constant, so we only compose a constant number of $\uTC{0}$ computations, which results again in a $\uTC{0}$ computation). Hence, we can compute $\ell'_s=\sum \left( L(P)\cdot \eps(P) \bmod s \right) $ in $\uTC{0}$ and observe that $\ell'_s < s \cdot q \cdot  \abs{\Pi}$. Therefore, to compute $\ell_s=\eps(L) \bmod s=(\sum L(P)\cdot \eps(P) \bmod s) \bmod s$, we just can subtract $k\cdot s $ from $\ell'_s$ for all $k \in \interval{1}{q \cdot \abs{\Pi}}$ and check if $0 \leq \ell'_s-k\cdot s < s$.

Now for $\epsilon(M)$ there remain the possibilities $m_j =  \ell_K + j\cdot x \cdot \pcBase^{\epsilon(K)}$ for $j\in \interval{0}{s-1}$. For each of these $j$ we can compute $m_j \bmod  s$ in $\uTC{0}$ as above and choose $j_0$ such that $m_{j_0} \bmod  s = \ell_s $. By \cref{lem: add}, we now can compute the marking $M$ with $\eps(M) = \ell_K + j_0\cdot  x\cdot\pcBase^{\epsilon(K)}$ in $\uTC{0}$. 
\qed\end{proof}

\begin{remark}
	From the proof of \cref{lem:moduloPowerOfTwoConstant} it follows that actually  when allowing $k$ in unary as part of the input, the problem
	\compproblem{A reduced power circuit $(\Gamma, \delta)$, a marking $M$ on $\Gamma$ and $k \in \N$ given in unary.}{$\eps(M) \bmod k$.}
	\noindent	is in $\uTC{1}$. In order to see this, we apply the algorithm from \cref{lem:moduloPowerOfTwoConstant} with $k=r$ and $a=0$. In order to compute $\phi(s)$, we factorize $s$ (by trying all possible factors) and then compute it via the formula $\phi(mn) = \phi(m) \phi(n)$ for coprime $m$ and $n$. Remember that integer division and multiplication is in $\uTC{0}$  \cite{hesse01,HeAlBa02}. Notice that after a logarithmic number of applications of $\phi$, we arrive at $\phi(s) = 1$. This is because at least in every second step $s$ will be even~-- and if $\ell$ is even we have $\phi(\ell) \leq \ell/2$. As each round in the proof of \cref{lem:moduloPowerOfTwoConstant}  is in $\uTC{0}$, the complete algorithm is in $\uTC{1}$.
\end{remark}

\section{The word problem for Baumslag-Solitar groups} \label{sec:BSgroups}

\newcommand{\DelBS}{\Delta_ \mathbf{B}}

Let $p,r \in \Z$. 
The Baumslag-Solitar group $\BS{p}{r}$ is defined by 
\begin{align*}
	\BS{p}{r}& = \Gen{a,t}{ta^pt^{-1} = a^r}.
\end{align*}
It is an HNN-extension of $\mathbb{Z}=\gen{a}$ with stable letter $t$ and $\phi: \gen{a^p} \rightarrow \gen{a^{r}}$.

Write $\DelBS$ for the alphabet $\Delta$ associated to the HNN extension as defined in \cref{sec:GroupTheory} meaning that $\DelBS = \{t,t^{-1}\} \cup \{a^k\mid k\in \Z \setminus \{0\}\}$. We write $\mathbf{B}=\BS{p}{r}$ as abbreviation.

Because we want to consider a more compressed form of the word problem (see Theorem~\ref{thm:WPinBS}) in this work we represent the elements of  
 $\BS{p}{r}$ as words over the alphabet
\[\Theta=\set{a^k, t^x}{k, x \in \mathbb{Z} \setminus\{0\}}.\] 
We use $a^0$ and $t^0$ as aliases for the empty word.
Thus, words $w \in \Thepr$ are of the form 
\begin{align}\label{wBRBS}
w =a^{k_0}t^{x_1}a^{k_1}t^{x_2}\cdots  t^{x_n}a^{k_n}
\end{align}
with $k_i, x_i \in \mathbb{Z}$.  Recall that $\len{\Theta}{w}$ denotes the length of $w$ as word over $\Theta$~-- this means that we count the $a^k,t^x$ for $k,x \neq 0$.

Note that the alphabet $\Theta$ is larger than the alphabet $\DelBS$. Still any word in $\Theta^*$ can be interpreted over the alphabet $\DelBS$  by the projection
\begin{align*}
\pi_\Theta: \Theta^* \to \DelBS^*, \quad a^k \mapsto a^k,\: t^x \mapsto \underbrace{t\cdots t}_{x \text{ times}}
\end{align*}
meaning that we write a letter $t^x$ as $x$ single letters $t$ (resp.\ $-x$ single letters $t^{-1}$ if $x<0$). Recall that the notation $v =_{\DelBS} w$ means $\pi_\Theta(v) = \pi_\Theta(w)$ for $v,w \in \Theta^*$.

Thus, the notion of Britton reductions and Britton's Lemma immediately can be applied to words over $\Theta$: we call $w \in \Theta^*$ Britton-reduced if and only if $\pi_\Theta(w)$ is.
Recall that a word $w \in \DelBS^*$ is called \emph{Britton-reduced} if none of the Britton reduction rules
\begin{align}\label{BRinBS}
\nonumber	ta^kt^{-1} &\:\to\: a^{k\cdot (\frac{r}{p})} \qquad \text{ if } k \in p\Z \\ 
	t^{-1} a^kt &\:\to\:  a^{k\cdot (\frac{p}{r})} \qquad \text{ if } k \in r\Z\\ \nonumber
	a^ka^\ell &\:\to\: a^{k+\ell}
\end{align}
can be applied in order to obtain a shorter word (over the alphabet $\DelBS$) representing the same group element (recall that $a^0$ is identified with the empty word).
.
For Baumslag-Solitar groups, Britton's Lemma can be rephrased as:

\begin{lemma}\label{rmk:Britton}
	If $w \in \DelBS^*$ with  $w  \inBSpr \gen{a}$, $w = ut^\eps v$  for $\eps \in \{\pm 1\}$ and $u,v \in \DelBS^*$, then 
	\begin{itemize}\vspace{-3mm}
		\item either $u = u_1t^{-\eps}u_2$ for $u_1, u_2 \in \DelBS^*$ and $t^{-\eps}u_2t^\eps \inBSpr \gen{a}$ and $u_2 \inBSpr \gen{a}$ or
		\item $v = v_1t^{-\eps}v_2$ for $v_1, v_2 \in \DelBS^*$ and $t^\eps v_1t^{-\eps} \inBSpr \gen{a}$ and $v_1 \inBSpr \gen{a}$.
	\end{itemize}
\end{lemma}

From now on, we mainly consider words over $\Theta$ of the form 
\begin{align}\label{wBRBS}
w = a^{k_0}t^{x_1}a^{k_1}t^{x_2}\cdots  t^{x_n}a^{k_n}
\end{align}
with $x_i \in \mathbb{Z}\setminus\{0\}$ (we still allow $a^0$ as abbreviation for the empty word). 
 Further, w.l.o.g. we can assume that $p\geq 1$ since $ta^pt^{-1}\eqBSpr a^r$ is equivalent to $ta^{-p}t^{-1}\eqBSpr a^{-r}$. 
 
Observe that writing words over $\Theta$ is not unique in the sense that we have $a^k$ as abbreviation for $a \cdots a$ ($k$-times) and as the letter $a^k \in \Theta$. In the following, we will only use the notation $a^k$ for the letter $a^k$ (and the same for $t$).  

\paragraph{Solvable Baumslag-Solitar groups.} In \cref{sec: ConjFixedEl} below we will use another way to represent the group  $\BSq$ (in the special case that $p=1$):  $\BSq \cong \Z[1/\pcBase] \rtimes \Z$ via the isomorphism $a\mapsto (1,0)$ and $t\mapsto (0,1)$. Here, $\rtimes$ denotes the \emph{semi-direct product} and $\Z[1/\pcBase] = \set{m/\pcBase^k\in \Q}{m,k \in \Z}$  is the set of fractions with powers of $\pcBase$ as denominators. It forms a group with addition as group operation. 
The multiplication in $\Z[1/\pcBase] \rtimes \Z$ is
defined by  $(r,m) \cdot (s,n) = (r + \pcBase^m \cdot s, m + n)$. 
Inverses can be computed by the formula $(r,m)^{-1} = (-r \cdot \pcBase^{-m}, -m)$.
Because $ta=_{\BSq} a^qt$, we can write $g \in \BSq$ as $g=t^ka^rt^m$ with $k,r,m \in \Z$ and $q \nmid r$. The element $t^ka^rt^m$ corresponds to $(q^kr,m+k)$ in the semi-direct product; it is Britton-reduced whenever $k\leq 0$.  Given an element $(s,m) \in \sdZ$ (represented by a power circuit) we can find $k$ using \cref{lem: floatpOp}. 
\medskip

From now on, we consider groups $\BS{p}{pq}$, meaning that $r=pq$ for some $q \in \Z$. We keep the abbreviation  $\mathbf{B}=\BS{p}{pq}$ and the other notations of this section.

\subsection{\Taligned words}

As in \cite{Weiss16} the idea to show that the word problem of $\BS{p}{pq}$ is in $\uAcf{0}$ is to define an equivalence relation on the $t$-blocks of $w$ to reduce the subgroup membership problem for $\gen{a}$ in $\BS{p}{pq}$ to the word problem of the free group.
Because our input words are over $\Theta$, we would like to define Britton reductions directly for words over $\Theta$ meaning that each letter $t^x$ should cancel with some letter $t^{-x}$ (see \cref{lem: CancelCompl} and \cref{rem:BRonTheta} and  below).
 This is a property of the \emph{\taligned} words we introduce in this section. 

\begin{definition}\label{def: VersionsW}
    Let $w=a^{k_0}t^{x_1}a^{k_1}t^{x_2}\cdots  t^{x_n}a^{k_n}\in \Theta^* $ and $i\leq j$. We define 
    \begin{enumerate}[(a)]
        \item $\wcl{i}{j} =a^{k_i}t^{x_{i+1}}a^{k_{i+1}}\cdots t^{x_j}a^{k_j} $
        \item $\wopi{i}{j} =t^{x_{i+1}}a^{k_{i+1}}\cdots t^{x_j}a^{k_j}$
        \item $\wopij{i}{j}= t^{x_{i+1}}a^{k_{i+1}}\cdots t^{x_j}$
    \end{enumerate}
     \end{definition}
In particular, we have $w=\wcl{0}{n}$. 
     \begin{example}
The following picture illustrates the subwords of $w=a^{k_0}t^{x_1}a^{k_1}t^{x_2}\cdots  t^{x_n}a^{k_n}\in \Theta^* $ defined by the different types of intervals:  $w_{\textcolor{green}{(n-1,n)}}=t^{x_n}$, $w_{\textcolor{blue}{[1,2]}}=a^{k_1}t^{x_2}a^{k_2}$ and $w_{\textcolor{red}{(0,n]}}=t^{x_1}a^{k_2}\cdots t^{x_n}a^{k_n}$. 
         
\begin{center}

\tikzstyle{pcnode} = [minimum size = 3pt,fill=black, circle, draw ]
\begin{tikzpicture}[outer sep = 0pt, inner sep = 0.7pt, node distance = 50]

\node[pcnode] (1) {};
\node[above of =1, yshift=-35] (7) {$a^{k_0}$};
\node[below of =1, yshift=40] (8) {$0$};
\node[pcnode, right of=1] (2) {};
\node[above of =2, yshift=-35] (9) {$a^{k_1}$};
\node[below of =2, yshift=40] (10) {$1$};
\node[pcnode, right of=2] (3) {};
\node[above of =3, yshift=-35] (11) {$a^{k_2}$};
\node[below of =3, yshift=40] (12) {$2$};
\node[right of=3] (4) {$\ldots$};
\node[pcnode, right of=4] (5) {};
\node[above of =5, yshift=-35] (13) {$a^{k_{n-1}}$};
\node[below of =5, yshift=40] (14) {$n-1$};
\node[pcnode, right of=5] (6) {};
\node[above of =6, yshift=-35] (15) {$a^{k_{n}}$};
\node[below of =6, yshift=40] (16) {$n$};
\node[right of =4, xshift=3] (20) {\textcolor{green}{$($}};
\node[left of =6, xshift=48] (21) {\textcolor{green}{$)$}};
\node[left of =2, xshift=48] (22) {\textcolor{blue}{$[$}};
\node[right of=3, xshift=-48] (23) {\textcolor{blue}{$]$}};
\node[right of =6, xshift=-48] (24) {\textcolor{red}{$]$}};
\draw 

(1) edge  node[midway, xshift=-22]{\textcolor{red}{$($}} (2)
(2) edge  (3)
(5) edge  (6)
(7) edge[white]  node[midway, black]{$t^{x_1}$} (9)
(9) edge[white]  node[midway, black]{$t^{x_2}$} (11)
(13) edge[white]  node[midway, black]{$t^{x_n}$} (15)
;

\end{tikzpicture}
\end{center} 
     \end{example}

\begin{definition}\label{def: height}
Let $w=a^{k_0}t^{x_1}a^{k_1}\cdots t^{x_n}a^{k_n}$ and let $1\leq i,j\leq n$.
\begin{enumerate}[(a)]
\item We write $\mn{i}{j}=\min\{i,j\}$ and $\mx{i}{j}=\max\{i,j\}$.
\item We define $\height{w}{i,j}=\sum_{\mu=\mn{i}{j}+1}^{\mx{i}{j}}x_\mu$ and write $\height{w}{i}=\height{w}{0,i}$ and $\height{w}{0,n}= \hght{w}$ (meaning that $\height{w}{0}=0$).
\smallskip

\item We define $\up{w}{i}=\max\{\height{w}{i}, \height{w}{i-1}\}$ and $\low{w}{i}=\min\{\height{w}{i}, \height{w}{i-1}\}$. Then, $x_i=\sgn{x_i}\cdot \left(\up{w}{i}-\low{w}{i}\right)$.
\end{enumerate}
\end{definition}
In particular, $\height{w}{i,j}=\hght{\wcl{i}{j}}$.

\begin{definition}\label{def: Taligned} 
We call a word 
$w=a^{k_0}t^{x_1}a^{k_1}\cdots t^{x_n}a^{k_n} \in \Thepr$  
\taligned if for all $1\leq  i,j \leq n$ the following holds:
\begin{align*}
    \height{w}{j}\in \interval{\low{w}{i}}{\up{w}{i}}  \Longrightarrow \height{w}{j}\in \{\low{w}{i}, \up{w}{i} \}.
\end{align*}
\end{definition}

\begin{example}
Let $w=a^{k_0}t^{2}a^{k_1}t^{-1}a^{k_3}t^{1}a^{k_3}t^{-2}a^{k_4}$. The following picture shows the topography of $w$ if we only consider the $t$. The black dots represent the $\height{i}{w}$, $1\leq i \leq 4$ and the lines connecting them represent the $x_i$. 
\begin{center}
 
\bigskip
\tikzstyle{pcnode} = [minimum size = 3pt,fill=black, circle, draw ]
\begin{tikzpicture}[outer sep = 0pt, inner sep = 0.7pt, node distance = 50]
  \draw[step=1cm, style=lightgray]
    (0,0) grid (4,2);
\foreach \y in {0,1,2}
	\draw[white] (0,\y) -- (0,\y) node[left=4pt, black] {$\scriptstyle\y$};
 \foreach\x in {1,2,3,4}
\draw[white] (\x,0) -- (\x,0) node[below=4pt, black] {$\scriptstyle\x$};
\node (1) at (0.5,1.5){\small $ x_1$};
\node (2) at (1,2.3){ $\scriptstyle\up{w}{1}$};
 \node (3) at (0,-0.3){ $\scriptstyle\low{w}{1}$};
 \node (4) at (1.6,0.8){ $\scriptstyle\height{w}{2}$};
 \node (5) at (1.7,1.6){\small $ x_2$};
  \node (6) at (2.3,1.6){\small $ x_3$};
   \node (7) at (3.5,1.6){\small $ x_4$};
 \draw
 (0,0) node[pcnode](0,0) {} --(1,2) node[pcnode] (1,2) {}-- (2,1)node[pcnode](2,1){}--(3,2)node[pcnode](3,2){}--(4,0)node[pcnode](4,0){};
\end{tikzpicture}	
\end{center}

We see that $\low{w}{1}< \height{w}{2}<\up{w}{1}$. So $w$ is an example of a word that is not \taligned.   
\end{example}

The following Lemma shows that for each word $w \in  \Thepr$ there indeed exists a \taligned word $\wal$ such that $\wal\eqBSpr w$. The proof of the lemma gives a construction of such a \taligned word $\wal$.

\begin{lemma}\label{lem: exTaligned} 
    For each word $w \in \Thepr$ there exists a \taligned word $\wal \in \Thepr$ such that $\wal=_{\DelBS} w$ and $\len{\Theta}{\wal} \leq(\len{\Theta}{w})^2$. 
\end{lemma}
\begin{proof}
Let $w=a^{k_0}t^{x_1}a^{k_1}\cdots t^{x_n}a^{k_n} \in \Thepr$. Let $\eta_1, \ldots, \eta_r \in \Z$ be such that $\{\eta_1, \ldots, \eta_r\}=\{\height{w}{i}, i \in \interval{0}{n} \}$ with $\eta_i < \eta_j$ if $i<j$. For each $i \in \interval{1}{n}$, $\low{w}{i},\up{w}{i} \in \{\eta_1, \ldots, \eta_r\}$. Because $\height{w}{0}=0$, also $ 0 \in \{\eta_1, \ldots, \eta_r\}$. 
For each $i \in \interval{1}{n}$, let $i_\ell$ and $i_u$ be such that $\eta_{i_\ell}=\low{w}{i}$ and $\eta_{i_u}=\up{w}{i}$. Note that $i_\mu > i_\ell$.  Then, $\eta_{i_u}-\eta_{i_\ell}=\abs{x_i}$. We define:
\begin{align}\label{defLambda}
\lambda^{(i)}_{\mu}&=\sgn{x_i}\cdot \left( \eta_{i_\ell+\mu}- \eta_{i_\ell+(\mu-1)} \right) ~ \text{ for } \mu \in \interval{1}{i_u-i_\ell}. 
\end{align}  
Then $x_{i}=\sum_{\mu=1}^{i_u-i_\ell}\lambda^{(i)}_\mu$. We write 
\[a^{k_{i-1}}t^{x_i}a^{k_i}=_{\DelBS}a^{k_{i-1}}t^{\lambda^{(i)}_1}t^{\lambda^{(i)}_2}\cdots t^{\lambda^{(i)}_{i_u-i_\ell}}a^{k_i}\]
with $\lambda^{(i)}_\mu$ defined as above. So we can consider the word
\begin{align*}
    \wal&=a^{k_{0}}\underbrace{t^{\lambda^{(1)}_1}t^{\lambda^{(1)}_2}\cdots t^{\lambda^{(1)}_{1_u-1_\ell}}}_{t^{x_1}}a^{k_1}\cdots a^{k_{n-1}}\underbrace{t^{\lambda^{(n)}_1}t^{\lambda^{(n)}_2}\cdots t^{\lambda^{(n)}_{n_u-n_\ell}}}_{t^{x_n}}a^{k_n}.
\end{align*}
By construction, for each $ i$ there is $ j \in \interval{1}{r}$ such that  $\height{i}{\wal}=\eta_j$. Thus, $\height{i}{\wal} \in  \interval{\eta_j}{\eta_{j+1}}$ implies that $\height{i}{\wal} \in \{\eta_j,\eta_{j+1}\}$. Hence, $\wal$ is \taligned. For each $x_i$ we constructed at most $n$ of the $\lambda^{(i)}_\mu$, so $\len{\Theta}{\wal}\leq(\len{\Theta}{w}^2)$. By construction, $\wal=_{\DelBS} w$. This shows the lemma.
\qed\end{proof}
Note that the word $\tilde{w}$ we constructed above is not unique with the property that it is \taligned and $\tilde{w}=_{\DelBS}  w$. Because $\tilde{w}=_{\DelBS}  w$ the definition of being  \taligned  only makes sense for words over $\Theta$. Moreover, if $i<j$ and $w$ is \taligned, then $w_I$ is \taligned for each $I \in \{(i,j), (i,j], [i,j] \}$. Recall our convention that $\mn{i}{j}=\min\{i,j\}$ and $\mx{i}{j}=\max\{i,j\}$.

\begin{lemma}\label{lem: CancelCompl}
Let $w=a^{k_0}t^{x_1}a^{k_1}\cdots t^{x_n}a^{k_n} \in \Theta^*$ be \taligned. If $w\inBSpr \gen{a}$ then for each $i \in \interval{1}{n}$ there exists $j\in \interval{1}{n}$ such that
\begin{enumerate}[(1)]
\item $x_i=-x_j$,
\item $\wcl{\mn{i}{j}}{\mx{i}{j}-1}\inBSpr \gen{a^p}$ if $x_{\mn{i}{j}}>0$,
\item $\wcl{\mn{i}{j}}{\mx{i}{j}-1}\inBSpr \genbig{a^{p\cdot q^{-x_{\mn{i}{j}}}}} $ if $x_{\mn{i}{j}}<0$.
\end{enumerate}
If all conditions are satisfied, we say that $t^{x_i}$ \emph{cancels} with $t^{x_j}$.

\end{lemma}

\begin{remark}\label{rem:BRonTheta}
 \cref{lem: CancelCompl} shows that for \taligned words $w$ we have $w \inBSpr \gen{a}$ if and only if there is $s\in \Z$ such that $w$ can be reduced to $a^s$ using only the following rules (on $\Theta^*$) for $x>0$: 
\begin{align}\label{BRinBS}
t^xa^kt^{-x} &\:\to\: a^{q^x \cdot k}  &&\text{ if } k \in p\Z, \nonumber\\ 
t^{-x} a^kt^x &\:\to\:  a^{ q^{-x} \cdot k}  &&\text{ if } k \in (p\cdot q^{-x}) \Z,\\ \nonumber
a^ka^\ell &\:\to\: a^{k+\ell}.
\end{align}
\end{remark}

\begin{proof}[of \cref{lem: CancelCompl}] 
Assume that $w \inBSpr \gen{a}$.
 We show that conditions (1) and (2) or (1) and (3) are satisfied. 

Let $ i \in \interval{1}{n}$ and assume that $x_i<0$. We write $t^{x_i}=_{\DelBS}t^{-1}t^{x_i+1}$. By Britton's Lemma (see \cref{rmk:Britton}) there exists $j$ with $x_j>0$ and $0\leq y<x_j$ such that
\begin{align*}
tt^{y}\wcl{j}{i-1}t^{-1}
\;&\inBSpr  \gen{a} \qquad \text{and } &t^{y}\wcl{j}{i-1}
\;&\inBSpr  \gen{a}  \qquad \text{or} \\
t^{-1}t^{x_i+1}\wcl{i}{j-1}t^{y}t
 &\inBSpr \gen{a} \qquad \text{and }  &t^{x_i+1}\wcl{i}{j-1}t^{y} &\inBSpr  \gen{a}.
\end{align*}
  Because $w$ is \taligned, $\up{w}{i}=\up{w}{j}$ and $\low{w}{i}=\low{w}{j}$ and thus $x_i=-x_j$ (this implies that $y=0$ or $y=x_i+1$). Hence, we are in one of the situations
\begin{align}
  \wopij{j-1}{i} &=_{\DelBS}  t^{-x_i-1}t\wcl{j}{i-1}t^{-1}t^{x_i+1}\qquad \text{or} \label{inner}\\ 
  \wopij{i-1}{j} &=_{\DelBS} t^{-1}t^{x_i+1}\wcl{i}{j-1}t^{-x_i-1}t. \label{outer}
\end{align} 
If we are in case (\ref{inner}),  because $\wcl{j}{i-1} \inBSpr \gen{a}$, we obtain $ t\wcl{j}{i-1}t^{-1} \eqBSpr  t a^k t^{-1}$ for some $k \in \Z$. As $t a^k t^{-1} \inBSpr \gen{a}$, it follows by Britton's Lemma that  $\wcl{j}{i-1}\inBSpr \gen{a^p}$. Thus, (1) and (2) hold. 

Assume that we are in case (\ref{outer}). Then, by a similar argument as before, we obtain  $\wopij{i-1}{j}\inBSpr \gen{a^p} $. 
Therefore, $\wcl{i}{j-1} \eqBSpr t^{-x_i} \wopij{i-1}{j} t^{x_i} \inBSpr \genbig{a^{p \cdot q^{-x_i}}}$.
 Hence, (1) and (3) hold. 
 
For the case $x_i>0$ we write $t^{x_i}=_{\DelBS} t^{x_i-1}t$ and use the same arguments as above. \qed
\end{proof}

\newcommand{\BStwth}{\ensuremath{\mathrm{\mathbf{BS}}_{2,3}}}
\begin{remark}\label{rem:BSTwoThree}
In \cref{lem: CancelCompl}, for $\BS{p}{r}$ we really need that $p$ divides  $r$.  Consider the group  $\mathbf{BS}_{2,3}$ and let \[w=t^{-2}a^3t^2a^2t^{-2}a^6t^2.\] This word is \taligned and 
\begin{align*}
t^{-2}a^3t^2a^2t^{-2}a^6t^2&=_{\BStwth} t^{-2}a^3ta^3t^{-1}a^6t^2\\
&=_{\BStwth} t^{-1}a^2a^3a^4t\\
&=_{\BStwth} a^6. 
\end{align*}
On the other hand,  $t^{-2}a^3t^2=_{\BStwth} t^{-1}a^2t$, $t^{2}a^2t^{-2}=_{\BStwth} ta^3t^{-1}$ and $t^{-2}a^6t^2=_{\BStwth} t^{-1}a^4t$. All the right sides are Britton-reduced, so no $t^{x_i}$ cancels completely with another $t^{x_j}$. This is a contradiction to the statement of \cref{lem: CancelCompl}. 

\end{remark}

\subsection{A formula for $w\inBSpr \gen{a}$}

\begin{lemma}[{{\cite[Lemma 9]{Weiss16}}}]\label{lem: wInGenA} 
Consider the word $w=a^{k_0}t^{x_1}a^{k_1}t^{x_2}\cdots  t^{x_n}a^{k_n}$. Then $w\inBSpr \gen{a}$, if and only if $w\eqBSpr a^{k}$  with $ k=\sum_{\nu=0}^{n}k_{\nu}\cdot q^{\height{w}{\nu}}$. 
\end{lemma}

Notice that this lemma also holds if $w$ is not \taligned.  
\begin{definition}\label{def: defOfkij} 
Let $w=a^{k_0}t^{x_1}a^{k_1}t^{x_2}\cdots  t^{x_n}a^{k_n}$ and $1\leq i<j \leq n$. We introduce the following notations: 
\begin{enumerate}[(i)]
	\item $\numba{w}=\sum_{\nu=0}^{n}k_{\nu}\cdot q^{\height{w}{\nu}}$
	\item $\kcl{i}{j}=\sum_{\nu=i}^{j}k_\nu \cdot q^{\height{w}{i,\nu}}$.
\end{enumerate}	
In particular, $\numba{\wcl{i}{j}}=\kcl{i}{j}$ and $\numba{w}=\kcl{0}{n}$. 
\end{definition}

Note that if $w \not\inBSpr \gen{a}$, then $\numba{w}$  is possibly not in $\Z$; but $w \inBSpr \gen{a}$ implies that $\numba{w} \in \Z$.
\smallskip

Next, we prove a formula how to calculate $\numba{vw}$ from $\numba{v}$ and $\numba{w}$. We also show that $\numba{v}$ does not change under Britton-reductions in $\BS{p}{pq}$. 
\begin{lemma}\label{lem: ConcOfwv}
Let $v=a^{k_0}t^{x_1}a^{k_1}\cdots  t^{x_n}a^{k_n}$ and $w=a^{\ell_0}t^{y_1}a^{\ell_1}\cdots  t^{y_m}a^{\ell_m}$. Then, $\numba{vw}=\numba{v}+q^{\hght{v}}\cdot \numba{w}$. Moreover, if $w \eqBSpr w'$, then $\numba{w}=\numba{w'}$. 
\end{lemma}
\begin{proof}
Observe that $ vw=a^{k_0}t^{x_1}\cdots t^{x_n}a^{k_n+\ell_0}t^{y_1}a^{\ell_1}\cdots t^{y_{m}}a^{\ell_{m}}$.
 By \cref{lem: wInGenA},
\begin{align*}
    \numba{vw}&=\left(\sum_{\nu=0}^{n}k_\nu \cdot  q^{\height{v}{\nu}}\right)+\ell_0\cdot q^{\hght{v}} +\sum_{\nu=1}^{m} \ell_\nu \cdot q^{\hght{v}+\height{w}{\nu}}
    \\&=\sum_{\nu=0}^{n}k_\nu \cdot  q^{\height{v}{\nu}} +q^{\hght{v}}\cdot \sum_{\nu=0}^{m} \ell_\nu \cdot q^{\height{w}{\nu}}
    \\&=\numba{v}+q^{\hght{v}}\cdot\numba{w}.
\end{align*}
So we have that
\begin{align*}
\numba{w_1ta^p t^{-1}w_2}&=\numba{w_1}+q^{\hght{w_1}}(\numba{ta^p t^{-1}}+q^{\hght{ta^p t^{-1}}}\numba{w_2})\\
&=\numba{w_1}+q^{\hght{w_1}}qp+q^{\hght{w_1}}\numba{w_2}\\
&=\numba{w_1a^{pq}w_2}.
\end{align*}
The case $t^{-1}a^{pq}t$ follows in a similar way.
This shows that $\numba{w}$ does not change under Britton-reductions in $\BS{p}{pq}$. Thus, the lemma follows. \qed
\end{proof}

\begin{remark}\label{rem:proj}
Using $\numba{ }$ we obtain a projection from $\BS{p}{pq}$ onto $\BSq$ if we use the presentation of $\BSq$ as the semi-direct product $\BSq\cong \sdZ$ via
\begin{align}
\rho: \BS{p}{pq} \rightarrow \BSq, \:\: \rho(u)=(\numba{u}, \hght{u}). 
\end{align}	
By \cref{lem: ConcOfwv} and the definition of the multiplication in $\sdZ$ we see that $\rho$ is indeed a homomorphism:
\begin{align*}
(\numba{u}, \hght{u}) \cdot(\numba{v},  \hght{v})=(\numba{u}+q^{\hght{u}}\cdot\numba{v}, \:\hght{u}+\hght{v})=\rho(uv).
\end{align*}
Moreover, if $(q^k\cdot r,m) \in \sdZ$ we have that $\rho(t^ka^rt^{m-k})=(q^k\cdot r,m)$. So, $\rho$ is, indeed, surjective.

\end{remark}

\subsection{Two helpful relations} 

In this chapter we define two relations $\approx$ and $\sim_c$ on the index set $\{1, \ldots, n\}$ if $n$ is the number of  $t$-blocks of $w \in \BS{p}{pq}$. The definition of $\sim_c$ will be quite similar to the one in \cite{Weiss16}, but the one of $\approx$ will be different. This will lead to a much shorter proof of the property shown in \cref{NecCond} and that $\approx$ is indeed an equivalence relation. 
For the definition of the relations and the proof of their properties we do not require \taligned words. However, in the following, we will use these relations only on \taligned words. So, one might safely assume that $w$ is \taligned.

First, we show how to add different $\kcl{i}{j}$: 

\begin{lemma} \label{lem: Addofk}
Let $w$ be as in (\ref{wBRBS}) and $i<j<\ell$. 
Then the following holds:
\begin{enumerate}[(i)]
\item \label{AddSameHeight}  $\kcl{i}{\ell-1}=\kcl{i}{j-1}+\kcl{j}{\ell-1}$ if $\height{w}{i}=\height{w}{j}$,
\smallskip
 \item  $\kcl{i}{\ell-1}=\kcl{i}{j-1}+q^{x_j}\cdot \kcl{j}{\ell-1}$ if $\height{w}{j}=\height{w}{i}+x_j$
\end{enumerate}

\end{lemma}
\begin{proof}
First observe that $\wcl{i}{\ell-1}=\wcl{i}{j-1}\wopi{j-1}{\ell-1}$. Moreover, note that $\hght{\wcl{i}{j-1}}=\height{w}{i,j-1}.$ Thus by \cref{lem: ConcOfwv}, 
\begin{align*}
   \numba{\wcl{i}{\ell-1}}=\numba{\wcl{i}{j-1}}+q^{\height{w}{i,j-1}}\cdot \numba{\wopi{j-1}{\ell-1}}.
\end{align*}
If $\height{w}{i}=\height{w}{j}$ then $\height{w}{i,j-1}=-x_j$. If $\height{w}{j}=\height{w}{i}+x_j$ then $\height{w}{i,j-1}=0$. This shows (i) and (ii). 
\qed\end{proof}

\begin{definition}\label{def: EquivRel}
Let $w$ be as in (\ref{wBRBS}). We say that $i \approx j$ if
\begin{enumerate}[(1)]
\item \label{EquivxiEqual} $x_i=x_j$,
\item \label{heightEqual} $\height{w}{i}=\height{w}{j}$, 
\item \label{EquivCond}
\setlength{\abovedisplayskip}{-10pt}
\begin{flalign*}
\kcl{\mn{i}{j}}{\mx{i}{j}-1} &\in p\Z  &\text{ if } x_{\mn{i}{j}}>0&&&&&&\\
\kcl{\mn{i}{j}}{\mx{i}{j}-1} &\in (p \cdot q^{-x_{\mn{i}{j}}})\Z & \text{ if } x_{\mn{i}{j}}<0.&&&&&&  
\end{flalign*} 
\end{enumerate}
By definition we say that $i \approx i$. 
\end{definition}

We claim that $\approx$ is an equivalence relation. It is \emph{reflexive} and \emph{symmetric} by definition.  It remains to show that it is \emph{transitive}. 
\medskip

 Assume that $i\approx j$ and $j \approx \ell$. Then  $x_i=x_j=x_\ell$ and $\height{w}{i}=\height{w}{j}=\height{w}{\ell}$. Let $\{\alpha, \beta, \gamma\}=\{i,j,\ell\}$ with $\alpha < \beta < \gamma$.
  By \cref{lem: Addofk},
  \begin{align}
\label{EqRelOne}\kcl{\alpha}{\gamma-1}&=\kcl{\alpha}{\beta-1} +  \kcl{\beta}{\gamma-1}. 
\end{align}
For two summands in (\ref{EqRelOne}) we know that they are in $s\Z$ for the appropriate $s \in  \{p, p \cdot q^{\abs{x_i}}\}$ and so is the third one. This shows that condition (\ref{EquivCond}) of \cref{def: EquivRel} is satisfied. 

We now define a second relation $i \sim_c j$ that indicates if some necessary conditions for $t^{x_i}$ and $t^{x_j}$ to cancel are satisfied.  
\begin{definition}\label{def: DefCancRel}
Let $w$ be as in (\ref{wBRBS}). We say that $i \sim_c j$ if
\begin{enumerate}[(1)]
\item $x_i=-x_j$, 
\item \label{relSumCond} $\height{w}{j}=\height{w}{i}+x_j$,
\item \label{relDivCond}
\setlength{\abovedisplayskip}{-10pt} \begin{flalign*}
\kcl{\mn{i}{j}}{\mx{i}{j}-1} &\in p\Z && \text{ if } x_{\mn{i}{j}}>0&&&&&&\\
\kcl{\mn{i}{j}}{\mx{i}{j}-1} &\in (p \cdot q^{-x_{\mn{i}{j}}})\Z &&   \text{ if } x_{\mn{i}{j}}<0.&&&&&&
\end{flalign*} 
\end{enumerate}
\end{definition}
This relation is symmetric by definition, but it is not an equivalence relation since it is not reflexive.

\begin{lemma}\label{NecCond}
If $i \approx i'$ and $i\sim_c j$, then also $ i' \sim_c j$. 
\end{lemma}

\begin{proof}
First observe that  $i \approx i'$ implies that $\height{w}{i}=\height{w}{i'}$ and $x_i=x_{i'}$. Moreover, $i\sim_c j$ implies that $x_{i'}=x_i=-x_j$. Let $\alpha < \beta<  \gamma$ such that $\{\alpha,\beta,\gamma\}=\{i,i',j\}$. We consider the following sums:
\begin{align*}
    \underbrace{x_\alpha+\sum_{\mu=\alpha+1}^{\gamma-1}x_\mu}_{(1)}= \underbrace{x_\alpha+\sum_{\mu=\alpha+1}^{\beta-1}x_\mu}_{(2)}  + ~ \underbrace{x_{\beta}+\sum_{\mu=\beta+1}^{\gamma-1}x_\mu}_{(3)}
\end{align*}
Because $i \approx i'$, condition (\ref{heightEqual}) in \cref{def: EquivRel} is satisfied and $x_i=x'_i$. Thus, one of the sums (1)--(3)  evaluates to $0$ and the remaining $x_\alpha$ and $x_\beta$ cancel.  If (3) is $0$, this is clear. If (1) is $0$, then $\{i,i'\}=\{\alpha, \gamma\}$ and so $x_\alpha=-x_\beta$. If (2) is $0$ then $\{i,i'\}=\{\alpha, \beta\}$ and thus $x_\alpha=x_\beta$. Because $i \sim_c j$ one of the remaining sums is $0$, and so is the last one. This shows that $\height{w}{\mx{i'}{j}}=\height{w}{\mn{i'}{j}}+x_{\mx{i'}{j}}$.  Thus,  
\newcommand{\exq}{e}
\begin{align*}
\kcl{\alpha}{\gamma-1}=\kcl{\alpha}{\beta-1}+q^{\exq \cdot x_\beta}\cdot \kcl{\beta}{\gamma-1} 
\end{align*}
with $e=1$ if $j \in \{\alpha, \beta\}$ and $e=0 $ for $\gamma=j$. In the latter case it is clear that all three summands are in $r\Z$ for $r \in \{p, p\cdot q^{\abs{x_i}}\}$.   Next observe that $\kcl{i}{i'} \in r\Z$  if and only if $\kcl{j}{i} \in (q^{x_i}\cdot r)\Z$ for $r \in \{p,p\cdot q^{-x_i}\}$ (if $j<i, i<i'$).  Moreover, $\beta=j$ if and only if $\alpha \in \{i,i'\}$. Thus, considering the remaining cases, it is easy to see that $\kcl{\mn{i'}{j}}{\mx{i'}{j}} \in p\Z$  if $x_{\mn{i'}{j}}>0$ and $\kcl{\mn{i'}{j}}{\mx{i'}{j}} \in (pq^{-x_{\mn{i'}{j}}})\Z$  otherwise. 
This shows point (\ref{relDivCond}) of \cref{def: DefCancRel} and thus the lemma. 
\qed\end{proof}

\subsection{The word problem of $\BS{p}{pq}$}

In this chapter we show that the word problem of $\BS{p}{pq}$ with the input given as a \redpc of a word $w \in \Thepr$ (as in Theorem~\ref{thm:WPinBS}) is in $\uAcf{0}$. We first introduce the following notations (see also \cite{Weiss16}):
Let $w=a^{k_0}t^{x_1}a^{k_1}t^{x_2}\cdots  t^{x_n}a^{k_n} \in \Thepr$ and let $\Sigma_w=\{[i]~|~ i \in \{1, \ldots, n\}\}$ be the set of equivalence classes of $\approx$ w.r.t. $w$. We set $\overline{[i]}=[j]$ if $i \sim_c j$. By \cref{NecCond}, this is well-defined. Moreover, $\overline{\overline{[i]}}=[i]$ because $\sim_c$ is symmetric and $[i]\neq \overline{[i]}$ because $i \not\sim_c i$. We assume that $\Lambda_w$ is such that $\Sigma_w$ is the disjoint union $\Lambda_w \cup \bar{\Lambda_w}$. So: $\Sigma^*_w/\{[i] \overline{[i]}= \overline{[i]}[i]=1~|~i \in \{1, \ldots, n\}\}=F_{\Lambda_w}$, the free group generated by $\Lambda_w$. We define 
\[C(w)=[1]\cdots [n], ~ \text{ and } ~ C(\wcl{i}{j})=[i+1]\cdots [j].\]
The following lemma follows closely its analog in  \cite[Lemma 14]{Weiss16}.

\begin{lemma}\label{lem: CwEqOne}
Let $w \in \Thepr$ be \taligned. Then we have $\wcl{i}{j}\inBSpr \gen{a}$ if and only if $C(\wcl{i}{j})=_{F_{\Lambda_w}}1$. 
\end{lemma}
Note that \cref{lem: CwEqOne} reduces the subgroup membership problem of $\gen{a}$ in $\BS{p}{pq}$ to the word problem in the free group $F_2$.
\begin{proof}
``$\Rightarrow$'' We assume that $\wcl{i}{j}\inBSpr \gen{a}$. By  \cref{lem: CancelCompl}  there exists $i+1 < \ell \leq j$ such that $t^{x_{i+1}}$ and $t^{x_\ell}$ cancel. In particular, $i+1 \sim_c \ell$ and $[i+1]=\overline{[\ell]}$.  Because $t^{x_{i+1}}\wcl{i+1}{\ell-1} t^{x_\ell}\inBSpr  \gen{a}$, also  
$\wcl{\ell}{j}\inBSpr  \gen{a}$.  By induction on the length of $C()$, $C(\wcl{i+1}{\ell-1})=_{F_{\Lambda_w}}C(\wcl{\ell}{j})=_{F_{\Lambda_w}}1$. So 
\begin{align}\label{CwEqOne}
C(\wcl{i}{j})=[i+1]C(\wcl{i+1}{\ell-1})\overline{[i+1]}C(\wcl{\ell}{j})=_{F_{\Lambda_w}}1.
\end{align}

``$\Leftarrow$'' Assume that $C(\wcl{i}{j})=_{F_{\Lambda_w}}1$. Then $[i+1]$ cancels with some $[\ell]$ in $F_{\Lambda_w}$. So we are in the same situation as in (\ref{CwEqOne}). 
This implies $C(\wcl{i+1}{\ell-1})=_{F_{\Lambda_w}}C(\wcl{\ell}{j})=_{F_{\Lambda_w}}1$. By induction on the number of $t$-blocks, $\wcl{i+1}{\ell-1}\inBSpr \gen{a}$ and $\wcl{\ell}{j}\inBSpr \gen{a}$. 
Because $[i+1]=\overline{[\ell]}$, $i+1 \sim_c \ell$. Thus, $\wcl{i+1}{\ell-1}\inBSpr \gen{a}$ implies that  $t^{x_{i+1}}\wcl{i+1}{ \ell-1}t^{x_\ell} \inBSpr  \gen{a}$. Hence, \[\wcl{i}{j}=a^{k_i}t^{x_{i+1}}\wcl{i+1}{\ell-1}t^{x_\ell}\wcl{\ell}{j}\inBSpr \gen{a}.\] 
This shows the lemma.
\qed\end{proof}
Observe that in \cref{lem: CwEqOne} it is necessary that $w$ is \taligned. Otherwise it can happen that $w \inBSpr \gen{a}$ but $i \not\sim_c j$ for all $i,j$ and thus, $C(w) \neq 1$. An example for such a word is $w=t^2a^pt^{-1}a^pt^{-1}$. 
\medskip 

The following corollary is similar to \cite[Corollary 17]{Weiss16} but because we consider \taligned words over $\Theta$ we need to check some additional conditions here. 

\begin{corollary}\label{cor: BrittonRed} 
Let $w=a^{k_0}t^{x_1}\cdots  t^{x_n}a^{k_n} \in  \Thepr$ be \taligned and $ \alpha=[i_1]\cdots [i_m] \in \Sigma^*_w$ be freely-reduced with $i_j <i_h$ for $j<h$ and $C(w)=_{F_{\Lambda_w}}\alpha$. Write
\begin{align*}
    w'=a^{\kcl{0}{i_1-1}}t^{x_{i_1}}a^{\kcl{i_1}{i_2-1}}\cdots t^{x_{i_m}}a^{\kcl{i_m}{n}},
\end{align*}
$\ell_j=\kcl{i_j}{i_{j+1}-1}$ and $x_{i_j}=y_j$. Then,  $  w'\eqBSpr w$ and the following hold: 
\begin{enumerate}[(a)]
\item  \label{ljZero} If  $\ell_j=0 $ then $\sgn{y_j}=\sgn{y_{j+1}}$. 
\item \label{CancelXiNeg}
$w'$ is Britton-reduced if and only if there are no factors of the form $t^{y_j}a^{\ell_j}t^{-y_j}$ with $y_j<0$ and $\ell_j \in (pq)\Z$.
\item \label{BRw} We obtain a Britton-reduced word $w_{red}$ with $w_{red}\eqBSpr w$ by the following construction: In $w$, we replace all $t^{y_j}a^{\ell_j}t^{-y_{j}}$ with $y_j<0$ and $\ell_j \in (pq)\Z$ by $t^{y_j+s_j}a^{q^{-s_j}\cdot\ell_j}t^{-y_j-s_j}$ with  $s_j>0$ maximal such that $q^{s_j}$ divides $\ell_j$.
\end{enumerate}

\end{corollary} 
\begin{proof} 
Because $C(\wcl{i_j}{i_{j+1}-1})=_{F_{\Lambda_w}}1$ and $\numba{\wcl{i_j}{i_{j+1}-1}}=\kcl{i_j}{i_{j+1}-1}$, by \cref{lem: wInGenA} and \cref{lem: CwEqOne} it follows that $  w'\eqBSpr w$.  Moreover, each $t^{x_i}$ cancels with some $t^{x_j}$ with $x_i=-x_j$ or is equal to some $x_{i_\ell}$.   So, 
 $w'$ is still \taligned.  

If $\ell_j=0$ and $\sgn{y_j}= -\sgn{y_{j+1}}$, then, because $w'$ is \taligned, $\abs{y_j}=\abs{y_{j+1}}$ and thus $[i_j]=\overline{[i_{j+1}]}$.   We obtain a contradiction to $\alpha$ being freely reduced, so we have shown (\ref{ljZero}). 

If there are factors as described in (\ref{CancelXiNeg}) it is clear that $w'$ is not Britton-reduced. 
So assume that  there are no such factors. Hence, for a Britton-reduction to be possible, there has to exist a factor of the form $t^{y_i}a^{\ell_i}t^{y_{i+1}}$ with  $\ell_i  \in p\Z$, $y_i>0$ and $y_{i+1}<0$ (see (\ref{BRinBS}), we can assume that all $y_i \neq 0$). Because $w'$ is \taligned, $\abs{y_i}=\abs{y_{i+1}}$. This implies that $[i]=\overline{[i+1]}$. Thus we  obtain a contradiction to $\alpha$ being freely reduced. 
Thus, no Britton-reductions are possible. This also shows the last point of the lemma: After replacing factors as described in (\ref{BRw}), no factors as in (\ref{CancelXiNeg})  are left and thus we obtain a Britton-reduced word (note that $s_j<\abs{y_j}$, otherwise we obtain again a contradiction to $\alpha$ being freely reduced). 
\qed\end{proof}

\subsubsection{Power circuit representations.} 

As before we fix a Baumslag-Solitar group $\BS{p}{pq}$ with $p \geq 1$ (and $q\neq 0$).
To be able to proof Theorem~\ref{thm:WPinBS} we need to explain how we represent words $w \in \Thepr$ using reduced power circuits.

\begin{definition}\label{def: PCrepBS}
	Let $(\Gamma, \delta)$ be a reduced power circuit. 
	\begin{enumerate}[(a)]
		\item Let $w=w_1\cdots w_n$ with $w_i \in \Theta$. We call $\mathcal{W}=(X_i, K_i)_{i \in \interval{1}{\len{\Theta}{w}}}$ a \redpc (reduced power circuit representation) of $w$ over $(\Gamma, \delta)$ if  the following holds for all $i \in \interval{1}{\len{\Theta}{w}}$: 
		\begin{itemize}
			\item $X_i$ and $K_i$ are compact markings on $\Gamma$,
			\item If $w_i=a^k$, then $\eps(K_i)=k$ and $X_i=\emptyset$,
			\item If $w_i=t^x$, then $\eps(X_i)=x$ and $K_i=\emptyset$.
		\end{itemize}
	
		\item For a \redpc $\mathcal{W}=(X_i, K_i)_{i \in \interval{1}{\len{\Theta}{w}}}$ of a word $w \in \Thepr$ we define $\msumk{\mathcal{W}}=\sum_{i=1}^{\len{\Theta}{w}} \abs{\sigma(K_i)}$. We further set $\len{\Theta}{\mathcal{W}}=\len{\Theta}{w}$.
\end{enumerate}
\end{definition}
Note that the term ``reduced'' in  \redpc refers to the power circuit (and does not mean that the word $w$ has to be Britton-reduced).

\begin{remark}\label{rem:MultByMinQ}
	If $q$ is negative we still can use usual power circuits over a positive basis $\abs{q}$ for the \redpc of a word $w \in \Thepr$:  We show how we can use \cref{lem:moduloPowerOfTwoConstant} to  multiply by powers of $-q$. To calculate $\eps(K) \cdot (-q)^{\eps(X)}$ for two markings $K$ and $X$ we first calculate $\eps(X) \bmod 2$ using \cref{lem:moduloPowerOfTwoConstant}. Then we calculate $\eps(K)\cdot\abs{q}^{\eps(X)}$ using  \cref{lem: powtwo} and calculate the sign according to the result of $\eps(X) \bmod 2$.  
	\smallskip
\end{remark}

\begin{remark}\label{lem:computeAh}
Observe that using \cref{lem: add} and \cref{lem:AddMult}, on input of a \redpc of a word $w \in \Thepr$ we can construct compact markings in a reduced power circuit evaluating to $\numba{w}$ or $\hght{w}$ respectively in $\uTC{0}$. This is due to the fact that we need at most three layers of addition and multiplications by powers of $q$. 
\end{remark}

The two crucial steps to solve the word problem in $\BSpq$ are making a word \taligned and then use \cref{cor: BrittonRed} to calculate the Britton-reduction. In the following, we perform these steps using \redpcs.

\begin{lemma}\label{lem: MakeTalinged} 
	The following is in $\uTC{0}$:
	\compproblem{A \redpc $\mathcal{W}=(X_i, K_i)_{i \in \interval{1}{\len{\Theta}{w}}}$ of a word  $w \in  \Thepr$ over $(\Gamma, \delta)$.}{A \redpc $\tilde{\mathcal{W}}=(Y_i, L_i)_{i \in \interval{1}{\len{\Theta}{\tilde{w}}}}$ of a word  $\tilde{w} \in  \Thepr$ such that $\wal$ is \taligned, $\wal\eqBSpr w$, $\len{\Theta}{\wal} \leq (\len{\Theta}{w})^2$, and
		\begin{itemize}
			\item  $\abs{\Gamma'}\leq \abs{\Gamma}+(\ceil{\log(\len{\Theta}{w})}+1)\cdot \abs{\mathcal{C}_{\Gamma}}$,
			\smallskip
			
			 \item$\abs{\mathcal{C}_{\Gamma'}}\leq \abs{\mathcal{C}_{\Gamma}}$, 
			\item
			\smallskip
						
			$\msumk{\tilde{\mathcal{W}}}=\msumk{\mathcal{W}}$.
	\end{itemize}}
\end{lemma}
\begin{proof}  
	We use the same construction as in the proof of  \cref{lem: exTaligned}. To calculate the $\height{w}{j}$, to compare them and to finally calculate the $\eps(Y_i)$ (the $\lambda_\mu$ in \cref{lem: exTaligned}) we only need a constant number of layers of \addition and \comp (see \cref{lem: add} and \cref{lem:compareCompactMarkings}).  For the markings $L_i$ we have that $L_i=K_j$ for some $j$ or $L_i=\emptyset$. When calculating the $\height{w}{j}$  we have sums of length at most $\len{\Theta}{w}$. To obtain the $Y_i$ we use the $\height{w}{j}$ to calculate sums of length two. Thus, according to \cref{lem: add} we can construct the markings $Y_i$  independently in parallel in $\uTC{0}$. Also the size constraints follow by that lemma.
	\qed\end{proof}

The following theorem is a main ingredient for the proof of Theorem ~\ref{thm:WPinBS}.

\begin{theorem} \label{thm: BRinBS}
	The following is possible in $\uAcf{0}$ : 
	\compproblem{A \redpc $\mathcal{W}=(X_i, K_i)_{i \in \interval{1}{\len{\Theta}{w}}}$ of a word  $w \in  \Thepr$ over $(\Gamma, \delta)$.}{A \redpc $\mathcal{W}^{(r)}=(X^{(r)}_i, K^{(r)}_i)_{i \in \interval{1}{\len{\Theta}{w_{red}}}}$ of a Britton-reduced word $w_{red}\in  \Thepr$ with $w_{red}\eqBSpr w$ over $(\Gamma^{(r)}, \delta^{(r)})$ such that $\len{\Theta}{w_{red}} \leq \len{\Theta}{w}$,  $\msumk{\mathcal{W}^{(r)}} \leq \msumk{\mathcal{W}} $ and there is a constant $c$ such that
		\begin{itemize}
				\item $\abssmall{\mathcal{C}^{(r)}_{\Gamma}} \leq \abs{\mathcal{C}_{\Gamma}}+ c \cdot \msumk{\mathcal{W}}$. 
				\smallskip
				
			\item $\abssmall{\Gamma^{(r)}}  \leq \abssmall{\Gamma}+c\cdot(\msumk{\mathcal{W}}+\abs{\mathcal{C}_{\Gamma}})\cdot (\ceil{\log(\len{\Theta}{w})})$
				\end{itemize}
	}
\end{theorem}

\begin{proof}
	The proof consists of the following steps: 
First, we construct the \redpc of a \taligned word $\tilde{w} \in \Thepr$ with $\tilde{w}\eqBSpr w$ and $\len{\Theta}{\tilde{w}} \leq (\len{\Theta}{w})^2$ (\cref{lem: MakeTalinged}). Then we compute $C(\tilde{w}) \in F_{\Lambda_{\tilde{w}}}$ by checking if $i \approx j$ or $i \sim_c j$ for all $i,j$. Let  $ [i_1]\cdots [i_m]$ be freely reduced such that $ C(\tilde{w})=_{F_{\Lambda_{\tilde{w}}}} [i_1]\cdots [i_m]$. We construct a \redpc of a (still \taligned)  word \begin{align}\label{wAlmostBR}
w'=a^{\kcl{0}{i_1-1}}t^{x_{i_1}}a^{\kcl{i_1}{i_2-1}}\cdots t^{x_{i_m}}a^{\kcl{i_m}{n}}
\end{align} with $w'\eqBSpr w$. Since $\tilde{w}$ is \taligned, this can be seen by \cref{lem: CwEqOne}.  Now we can Britton-reduce  $w'$ using \cref{cor: BrittonRed}.  In the following, we give some more details on these steps. 
	\smallskip

	Assume that we have constructed a \redpc $\mathcal{W}^{(\ell)}=(X^{(\ell)}_i, K^{(\ell)}_i)_{i \in \interval{1}{\len{\Theta}{\wal} }}$ of a \taligned word $\wal$  with $\tilde{w}\eqBSpr w$ and $\len{\Theta}{\tilde{w}} \leq (\len{\Theta}{w})^2$. This is possible in $\uTC{0}$ due to \cref{lem: MakeTalinged}.
	 Let $n$ be the number of letters $t^x \in \Theta$ occurring in $\tilde{w}$. To compute  $C(\wal)=[1]\cdots [n]$ as a word in  $\Sigma_{\wal}$ (see beginning of this section), 
	 for each $i,j \in \interval{1}{n}$ we need to check if $i \approx j$ or $i \sim_c j$ by checking the conditions in  \cref{def: EquivRel} and \cref{def: DefCancRel}. For this, we calculate $\numba{\tilde{w}_{[i,j]}}$ (\cref{def: defOfkij}   and \cref{lem:AddMult}), $\height{\tilde{w}}{i}$ (\addition) and $\bmod~p \cdot q^{x}$ (\cref{lem:moduloPowerOfTwoConstant},  $p,q$ are constants). So, this is in $\uTC{0}$. 
    For each pair $i,j$  we use a separate checking power circuit we discard after the checking. 
	Now, we freely reduce $C(\wal)$ as a word over $\Sigma_{\wal}$.
	\smallskip
	
	 Let $ [i_1]\cdots [i_m]$ be this freely-reduced word. We choose the representatives such that $i_j<i_k$ for $j<k$ (so $\kcl{i_j}{i_{j+1}-1}$ is well-defined). Since $C(\tilde{w}_{[{i_j},{i_{j+1}-1}]} )=_{F_{\Lambda_{\tilde{w}}}}1$ and $\tilde{w}$ is \taligned,
	\cref{lem: CwEqOne} implies that $\tilde{w}_{[{i_j},{i_{j+1}-1}]} \inBSpr \gen{a}$ and, therefore,  $\numba{\tilde{w}_{[{i_j},{i_{j+1}-1}]}}=\kcl{i_j}{i_{j+1}-1} \in \Z$  for $j \in \interval{0}{m}$ (with $i_0=0$ and $i_{m+1}-1=n$). In particular, $w'\eqBSpr \tilde{w}$ for $w'$ as in (\ref{wAlmostBR}).
	
 We construct markings $K'_j$ with $\eps(K'_j)=\kcl{i_j}{i_{j+1}-1}$ using the formula in \cref{def: defOfkij}   on input of the markings $K_i^{(\ell)}$. By \cref{lem:AddMult}
	this is possible in \uTC{0}. We set $X_j'=X^{(\ell)}_{i_j}$ for $j \in \interval{1}{m}$. Markings $K'_j,X'_j$, $j \in \interval{1}{\len{\Theta}{w'}}$ not defined during this process are set to the empty marking.  Then, $(X'_j,K'_j)_{j \in \interval{1}{\len{\Theta}{w'}}}$ is a \redpc of $w'$ as in (\ref{wAlmostBR}). Note that $w'$ is still \taligned, since if $[i]$  cancels with  $[j]$ then $\height{w}{i,j-1}=0$ and $x_i=-x_j$.
	\smallskip
	
	According to \cref{cor: BrittonRed}, the word $w'$  is Britton-reduced if and only if it has no factors of the form $t^{x_{i_j}}a^{\kcl{i_j}{i_{j+1}-1}}t^{x_{i_{j+1}}}$ such that $x_{i_j}<0, x_{i_{j+1}}>0$ and $pq ~|~ \kcl{i_j}{i_{j+1}-1}$. If they exist, we can find them using \comp and \cref{lem:moduloPowerOfTwoConstant} (for calculating $\bmod~pq$). We construct a marking $S_j$ such that $\eps(S_j)>0$ is maximal with $p \cdot q^{\eps(S_j)} \mid\kcl{i_j}{i_{j+1}-1}$  using \cref{lem: powtwo}(\ref{MaxPow}). 
	Using \addition and \powertwo we construct compact markings $K^{(r)}_j, X''_{j}$ and $X''_{j+1}$ with  $\epsilon(K^{(r)}_j)= q^{-\epsilon(S_j)}\cdot \epsilon(K'_j)$, $\eps(X''_{j})=\eps(X'_{j})+\eps(S_j)$ and  $\eps(X''_{j+1})=\eps(X'_{j+1})-\epsilon(S_j)$. For all markings $K'_i,X'_i$ not changed during this step we set $X_j''=X'_j$ and $K^{(r)}_j=K_j'$.  According to \cref{cor: BrittonRed} no more Britton-reductions are possible (observe that $\epsilon(S_j) < \abssmall{\eps(X'_j)} $ because otherwise, $[i_1]\cdots [i_m]$ is not freely reduced). 
	\smallskip
	
	Among the above steps, only making a word \taligned can increase its length over $\Theta$. So to ensure that $\len{\Theta}{w_{red}} \leq \len{\Theta}{w}$, we reverse this as follows: for each $j \in \interval{1}{m}$ with $\eps(K^{(r)}_{j-1})\neq 0$ we search for $s\geq j$ maximal such that $\eps(K^{(r)}_{\ell})=0$ for all $\ell \in \interval{j}{s}$.  We construct a marking  $X^{(r)}_{j}$ with $\eps(X^{(r)}_{j})=\sum_{\ell=j}^{s+1}\eps(X_{\ell}'')$ using \addition. If no such $s$ exists we set $X_j^{(r)}=X''_j$. By \cref{cor: BrittonRed}(\ref{ljZero}) all $\eps(X_{\ell}'')$ have the same sign, so  no $t$'s can cancel and the resulting word is still Britton-reduced (but not \taligned in general).  All markings $K^{(r)}_i,X^{(r)}_i$ not defined yet we set to be the empty marking. 
	Thus, $\mathcal{W}^{(r)}=(X^{(r)}_j, K^{(r)}_j)_{j \in \interval{1}{\len{\Theta}{w_{red}}}}$ is a \redpc over a reduced power circuit  $(\Gamma^{(r)}, \delta^{(r)})$ of a Britton-reduced word $w_{red}$ with $w_{red}\eqBSpr w$ and $\len{\Theta}{w_{red}} \leq \len{\Theta}{w}$.
	\medskip

	During the whole construction, we only use a constant number of layers of \addition and \powertwo, calculations of $\bmod~p\cdot q^x$ (\cref{lem:moduloPowerOfTwoConstant}) and one reduction in the free group. For calculating modulo we use separate checking power circuits we discard after the checking.  So, since $\uTC{0} \subseteq\uAcf{0}$ the above algorithm is in $\uAcf{0}$.

	For the size constraints,  the crucial observation is that each marking $K'_{j}$ appears at most once outside of an exponent (\ie as ``mantissa'') multiplied by a suitable power of $q$ in the formulas in \cref{def: defOfkij} or when calculating $q^{\eps(S)} \cdot \eps(K'_j)$ (if we have both operations for one $j$, we can do them at the same time). This includes multiplications by powers of $-q$ due to \cref{rem:MultByMinQ}. Moreover, by \cref{lem:AddMult}, $\abssmall{\sigma(K'_j)}\leq \msumk{\tilde{w}_{[{i_j},{i_{j+1}-1}]}}$. So, by \cref{lem: powtwo} it follows that   $\msumk{\mathcal{W}^{(r)}}\leq \msumk{\mathcal{W}}$.
 Moreover, all sums have length at most $\len{\Theta}{\tilde{w}}\leq\len{\Theta}{w}^2$.
 	Together with the \OpLem and \cref{lem: MakeTalinged} we obtain that there is a constant $c$ such that
	\begin{align*}
	\abssmall{\mathcal{C}_{\Gamma^{(r)}}} &\leq \abs{\mathcal{C}_{\Gamma}}+c\cdot\msumk{\mathcal{W}}\\[0.5mm]
	\abssmall{\Gamma^{(r)}} &\leq \abs{\Gamma} + c \cdot (\abs{\mathcal{C}_{\Gamma}}+\msumk{\mathcal{W}})\cdot (\ceil{\log(\len{\Theta}{w})})
	\end{align*}

	The theorem follows. \qed
	\end{proof}
%


\begin{corollary}[Theorem~\ref{thm:WPinBS}] 	For every $p,q \in \Z$ with $\abs{p},  \abs{q} \geq 1$ the word problem of $\BS{p}{pq}$ with the input word given over $a^k, t^x$  with $k,x \in \Z$ represented by markings in a reduced power circuit is in $\uAcf{0}$. 
\end{corollary}

\begin{proof}
By 	\cref{lem:BrittonsLemma}, if $w \in \Thepr$   is Britton-reduced, to check if $w\eqBSpr 1$ we just need to check if $w$ is the empty word. So, on input of a \redpc of a word $w \in \Thepr$ we  construct the \redpc of a Britton-reduced word $w_{red}\in \Thepr$ such that $w\eqBSpr w_{red}$. By \cref{thm: BRinBS} this is possible in $\uAcf{0}$. 
	
Observe that in the case $\abs{q}=1$, we have $ta^{p}t^{-1}=a^{\sgn{q} \cdot p}$. Thus, during the algorithm above, no multiplication by a power of $r$ with $r \geq 2$ is performed. So, for the \redpcs of  words over $\Thepr$ we can use power circuits to base $2$ and perform \addition and $\bmod~p\cdot q^x$ (to calculate the sign) on these power circuits.
\qed\end{proof}

To solve the word problem in $\BGpq $  (\cref{sec:WPinBG}), for the Britton-reductions in $\BSpq$ we can restrict to a special case: On input of two already Britton-reduced words $u,v \in \Thepr$ we can Britton-reduce the word $uv$ in $\uTC{0}$. This will be an important ingredient to show that the word problem of $\BGpq$ is in $\uTC{1}$.

\begin{lemma}\label{lem:BRinBSinTC}
The following is in $\uTC{0}$: 
\compproblem{\Redpc's $\mathcal{U}^{(i)}, \mathcal{V}^{(i)}$ of Britton-reduced words $u_i,v_i \in \Thepr$ over $(\Gamma, \delta)$ for $i \in \interval{1}{\nu}$.}{\Redpc's $\mathcal{W}^{(i)}$  of Britton-reduced words $w_i \in \Thepr$  over $(\Gamma', \delta')$ for $i \in \interval{1}{\nu}$  such that  $w_i\eqBSpr u_iv_i$, $(\Gamma, \delta)\leq (\Gamma', \delta')$,  and there is a constant $c$ such that
\begin{itemize}
	\item $\abs{\Gamma'} \leq \abs{\Gamma}+c\cdot\left( \mathcal{S}+\abs{\mathcal{C}_{\Gamma}}\right)\cdot \ceil{\log(\mathcal{M})}$
	\item $\abs{\mathcal{C}_{\Gamma'}}\leq \abs{\mathcal{C}_{\Gamma}}+c \cdot \mathcal{S} $
	\item $\msumk{\mathcal{W}^{(i)}} \leq \msumk{\mathcal{U}^{(i)}}+\msumk{\mathcal{V}^{(i)}}$ for all $i \in \interval{1}{\nu}$
	\item $ \len{\Theta}{w_i} \leq \len{\Theta}{u_i}+\len{\Theta}{v_i}$ for all $i \in \interval{1}{\nu}$
\end{itemize}
with $\mathcal{S}=\sum_{i=1}^{\nu}\left(\msumk{\mathcal{U}^{(i)}}+\msumk{\mathcal{V}^{(i)}}\right)$\newline and $ \mathcal{M}=\max_{i \in \interval{1}{\nu}}(\len{\Theta}{u_i}+\len{\Theta}{v_i})$.
} 	
\end{lemma}	
Note that, in particular,  $\sum_{i=1}^{\nu}\msumk{\mathcal{W}^{(i)}} \leq \mathcal{S}$ and
$ \max_{i \in \interval{1}{\nu} }(\len{\Theta}{w_i}) \leq \mathcal{M}$. 

\begin{proof}
We describe the proof for the case that $\nu=1$ writing $u= u_1$, $v= v_1$ etc. The general case follows easily by the observation that the \OpLem allow the manipulation of several (bunches of) markings on the same reduced power circuit in parallel~-- we give some details at the end of the proof.

We obtain a \redpc of $uv$ by concatenating the \redpcs of $u$ and $v$. Because \taligned words have nice cancellation properties,  we start by constructing a \redpc of a \taligned word $\tilde{w}$ such that $\tilde{w}\eqBSpr uv$ and $\len{\Theta}{\tilde{w}} \leq (\len{\Theta}{uv})^2$ (\cref{lem: MakeTalinged}). Let
\[\tilde{w}=a^{k_m}t^{x_{m}}a^{k_{m-1}}\cdots t^{x_1}a^{k_0+k'_0}t^{x'_1}a^{k'_1}\cdots t^{x'_n}a^{k'_n}. \] 
with $u=_{\DelBS} a^{k_m}t^{x_{m}}a^{k_{m-1}}\cdots t^{x_1}a^{k_0}$ and $v=_{\DelBS} a^{k'_0}t^{x'_1}a^{k'_1}\cdots t^{x'_n}a^{k'_n}$ (note that, to simplify the notation for this proof we changed the indices for $u$ compared to (\ref{wBRBS})).
For a \taligned word $w$ we have that
\begin{align}\label{FormTildeW}
\height{w}{i,j-1}=0 \text{ and } \sgn{x_i}=-\sgn{x_j}  \implies \abs{x_i}=\abs{x_j}.
\end{align} Thus, because $\tilde{w}$ is \taligned and $u$ and $v$ are both Britton-reduced, $\tilde{w}$  is Britton-reduced if and only if either $t^{x_1}$ does not cancel with $t^{x'_1}$ or $pq \nmid (k_0+k'_0)$.
 Even more, if $\tilde{w}$ is not Britton-reduced, $t^{x_i}$ can only cancel with $t^{x'_\ell}$  (for arbitrary $i$ and $\ell$) if $i=\ell$, $x'_\ell=-x_i$, and  $t^{x_j}$ cancels with $t^{x'_j}$ for all $j \leq i$ (note that, in general, this is not true for non-\taligned words). Because $u,v$ are Britton-reduced, these are the only possible Britton-reductions. 

We proceed as follows: For each $i\leq\min\{m,n\}$ we compute a bit $\lambda_i$ indicating whether  $t^{x_i}$ cancels with $t^{x'_i}$ under the assumption that  $t^{x_{i-1}}$ cancels with $t^{x'_{i-1}}$. Then we search for $i_0$ maximal such that $\lambda_i=1$ for all $i \leq i_0$. 

If $x_j \neq -x'_j$ for some $j \leq i$ then $t^{x_i}$ cannot cancel with $t^{x'_i}$ and we set $\lambda_i=0$. Otherwise, we set
\begin{align*}
z_i=a^{k_{i-1}}t^{x_{i-1}}\cdots t^{x'_{i-1}}a^{k'_{i-1}}.
\end{align*}
Now, using \cref{lem:AddMult}  we check whether $\numba{z_i} \in \Z$. If not, then $t^{x_{i-1}}$ cannot cancel with $t^{x'_{i-1}}$ and we set $\lambda_i=0$. If yes, we use the formula in \cref{def: defOfkij} and \cref{lem:AddMult} to calculate a marking $Z_i$ with  $\eps(Z_i)=\numba{z_i} $. We can check in $\uTC{0}$ if $\numba{z_i} \in p\Z$ and $x_i>0$ or $\numba{z_i} \in (pq^{-x_i})\Z$ and $x_i<0$. If we are in one of these two cases, then $t^{x_i}$ cancels with $t^{x'_i}$ under the assumption that  $t^{x_{i-1}}$ cancels with $t^{x'_{i-1}}$ and we set $\lambda_i=1$.
\smallskip

For each $i$, we use a separate checking power circuit we delete after the checking, only remembering the bits $\lambda_i$. We search for  the maximal $i_0$ such that for all $i \leq i_0$ we have $\lambda_i=1$. If $\lambda_i=0$ for all $i$, we set $i_0=0$ and $\ell_{0}=k_{0}+k'_{0}$. By the choice of $i_0$, we know that  $\ell_{i_0}=k_{i_0}+q^{x_{i_0}}\cdot\numba{z_{i_0}}+k'_{i_0} \in \Z$. We can calculate a marking $L_{i_0}$ with  $\eps(L_{i_0})=\ell_{i_0}$ using  one more  layer of \powertwo and \addition each. Then, 
\begin{align}\label{BRwithoutq}
\tilde{w} \eqBSpr a^{k_m}t^{x_{m}} \cdots t^{x_{i_0+1}} a^{\ell_{i_0}}t^{x'_{i_0+1}}\cdots t^{x'_n}a^{k'_n}. 
\end{align} 
If $t^{x_i}$ and $t^{x'_i}$ cancel, then $\height{w}{i,i'-1}=0 \text{ and } \sgn{x_i}=-\sgn{x'_i}$. Thus, by (\ref{FormTildeW})    the word in (\ref{BRwithoutq}) is still \taligned. So, as above and because of the choice of $i_0$, it is not Britton-reduced if and only if $x_{i_0+1}=-x'_{i_0+1}$, $x_{i_0+1}<0$ and $\ell_{i_0} \in (pq)\Z$. In this case, we   calculate a marking $S$ such that  $\eps(S)>0$ is maximal with $p \cdot q^{\eps(S)} \mid \ell_{i_0} $. By \cref{lem: powtwo}(\ref{MaxPow}) this is possible in $\uTC{0}$. Furthermore,  $\eps(S)< \abs{x_{i_0+1}}$, otherwise $i_0$ is not maximal.  Thus,
\begin{align*}
\tilde{w}\eqBSpr a^{k_m}t^{x_{m}}\cdots t^{x_{i_0+1}+\eps(S)}a^{ q^{-\eps(S)}\cdot\ell_{i_0}}t^{x'_{{i_0}+1}-\eps(S)} \cdots t^{x'_{n}}a^{k'_n}
\end{align*}
and the right side is Britton-reduced (if (\ref{BRwithoutq}) is already Britton-reduced,  set $S=\emptyset$). 
\smallskip

Making a word \taligned might increase its length over $\Theta$.
 To ensure that $\len{\Theta}{w} \leq \len{\Theta}{uv}$, we reverse this step as follows: For each $x_i$ (resp. $x'_i$) with $k_i \neq 0$ (resp. $k'_{i-1} \neq 0$) we search for $s$ (resp. $s'$) maximal such that $k_\mu =0$ for all $\mu \in \interval{i-1}{i-s}$ (resp. $\mu \in \interval{i}{i+s'-1}$).
  If $s$ (resp. $s'$)  does not exist, set $s=0$ (resp. $s'=0$). We replace $t^{x_i}\cdots t^{x_{i-s}}$ (resp. $t^{x'_i}\cdots t^{x'_{i+s'}}$) by $t^{\sum_{\mu=i}^{i-s}x_\mu}$ (resp. $t^{\sum_{\mu=i}^{i+s'}x'_\mu}$).  The \redpcs we obtain by \addition.
   When making a word \taligned, we split $t^x$ in parts $t^{y_i}$ with $\sgn{x}=\sgn{y_i}$ (\cref{lem: MakeTalinged}).
   Hence, because $u,v$ are Britton-reduced, all these  $x_i$ (or $x'_i$) have the same sign and, thus, do not cancel. 
   Britton-reductions do not increase the length over $\Theta$,  so the resulting word is Britton-reduced and  $\len{\Theta}{w}\leq \len{\Theta}{uv}$.  
\smallskip

During the whole construction, we only use a constant number of layers of \addition and \powertwo (including multiplications by powers of $-q$ due to \cref{rem:MultByMinQ}) and calculations of $\bmod~ p\cdot q^x$ (\cref{lem:moduloPowerOfTwoConstant}). For calculating modulo  we use separate checking power circuits we discard after the checking.  So the above algorithm is in $\uTC{0}$.

	For the size constraints,  the crucial observation is that, when constructing the marking $Z_{i_0}$ with $\eps(Z_{i_0})=\numba{z_{i_0}}$,   each marking representing $k_j, k'_j$, $j \in \interval{0}{i_0-1}$ appears at most once outside of an exponent (\ie as ``mantissa'') multiplied by a suitable power of $q$ in the formula in \cref{def: defOfkij} or when calculating $q^{s} \cdot \ell_{i_0}$ (if  we have both operations, we can do them at the same time). So, by \cref{lem:AddMult} it follows that  $\msumk{\mathcal{W}}\leq \msumk{\mathcal{U}}+\msumk{\mathcal{U}}$.
Moreover, all sums have length at most $\len{\Theta}{\tilde{w}}\leq\len{\Theta}{uv}^2$.
Now the size constraints follow by  the \OpLem and \cref{lem: MakeTalinged}.
\smallskip

  Concerning parallelism: By \cref{lem: add},  \cref{lem: floatpOp} and \cref{lem:AddMult}, \addition and \powertwo can be applied independently in parallel to several (bunches of) markings. So we can apply the above process to the words $u_iv_i$ for $i \in \interval{1}{\nu}$ independently in parallel, while when applying an operation to several (bunches of) markings at the same time we make sure that it is the same operation for all markings. This shows the Lemma. 
\qed\end{proof}

Note that the words $w_{red}$ (resp. $w$) we obtain as a result in \cref{lem: MakeTalinged} (resp. \cref{thm: BRinBS}) are not \taligned in general. 

\section{The word problem of \BGpq}\label{sec:WPinBG}
 The Baumslag group $\BGpr$ can be understood as an HNN extension of the Baumslag-Solitar group $\BS{p}{r}$ with stable letter $b$ and isomorphim $\phi: \gen{a} \rightarrow \gen{t}$, $\phi(a)=t$ (with $a,t$ being the generators of $\BS{p}{r}$):  

\begin{align*}
	\begin{split}
		\BGpr  & = \Gen{\BS{p}{r},b}{bab^{-1} = t}
		 = \Gen{a,t,b}{ta^pt^{-1} = a^r, bab^{-1} = t}.
	\end{split}
\end{align*}
Due to $bab^{-1} = t$, we obtain  a presentation $\Gen{a,b}{bab^{-1} a^p = a^r bab^{-1}}$ as one-relator group. 
Moreover, $\BS{p}{r}$ is a subgroup of $\BGpr$ via the canonical embedding.
%

\paragraph*{Britton reductions.}
 Let us choose
\[\Delta = (\BS{p}{r}\setminus \{1\}) \cup \oneset{b, b^{-1}}\]
as (infinite) alphabet. 
Recall from \cref{sec:GroupTheory} that  a word $z \in \Delpr$ is called \emph{Britton-reduced} if there is no factor of the form $ba^kb^{-1}$ or $b^{-1}t^kb$ with $k \in \Z$ and if $z$ does not have two successive letters from $\BS{p}{r}$. If $z$ is not Britton-reduced, one can apply at least one of the rules
\begin{align} \label{BRinGpr}
	ba^kb^{-1} &\:\to\: t^k\\ \nonumber
	b^{-1} t^kb &\:\to\: a^k\\ \nonumber
	w_iw_{i+1}&\:\to\: [w_iw_{i+1}]
\end{align}
(see also \cref{sec:GroupTheory})
 in order to obtain a shorter word (over the alphabet $\Delta$) representing the same group element. Again, we write $a^0$, $t^0$ as abbreviations for the empty word. If we can apply  a Britton-reduction as in (\ref{BRinGpr}), we say that $\beta$ \emph{cancels with} $\beta^{-1}$ for $\beta \in \{b,b^{-1}\}$. 
Now, Britton's Lemma can be rephrased as:

\begin{lemma}[Britton's Lemma for $\BGpr$] \label{lem: BRinGpr}
	Let $z \in \Delta^*$ be Britton-reduced. Then $z \inBGpr \BS{p}{r}$ if and only if $z$ does not contain any letter $b, b^{-1}$. In particular, $z\eqBGpr 1$ if and only if $z\eqBSpr 1$. 
\end{lemma} 

 In the following we mainly consider words $z \in \Delta^*$ of the form
\begin{align}\label{WordsInGpr}
z=w_0\beta_1w_1\beta_2w_2 \cdots \beta_n w_n
\end{align}
with $\beta_i \in \{b, b^{-1}\}$ and  $w_i\in\BS{p}{r}$. Note that here we allow that $w_i\eqBSpr 1$.
Moreover, we will use the following notations: 
For $u, v \in \Delpr$, 
\begin{align}
u = w_h\beta_{h}  \cdots w_1  \beta_{1}w_0,  \quad v = w'_0  \beta'_{1} w'_1 \cdots  \beta'_{\ell}w'_\ell\label{betaFact}
\end{align}
with $w_j, w'_j\in \BS{p}{r}$ and $\beta_{j},\beta'_{j} \in \oneset{b, b^{-1}}$

we define 
\begin{align} 
uv[i,j] =  \beta_{i+1}w_i \cdots \beta_{1}[w_0w'_0]  \beta'_{1}  \cdots  w'_j \beta'_{j+1}.\label{betaFactIJ}
\end{align} 
For $w \in \BS{p}{r}$, $\hght{w}$ is defined as in \cref{def: height} (note that $\hght{w}$ does not change under Britton-reductions, so this is well-defined). We write $\abs{u}_\beta$ for the number of $\beta\in \{b,b^{-1}\}$ occurring in $u$. 
\smallskip

In the following, we also use the representation as words over the alphabet
$\Sigma=\{a, a^{-1},b,b^{-1},t,t^{-1}\}$ (with $a,b,t$ the generators of $\BGpq $), e.g. when considering the word problem of $\BGpr$ in \cref{cor:TC1}. We also use this representation in the following example: 

\begin{example}\label{ex:blowup}
	Let $\pcBase \geq 2$. We define words 
	$w_0 = t$ and $w_{n+1} = b\,  w_{n}\,  a^p\, w_{n}^{-1}\, b^{-1}$
	for $n \geq 0$ with $w_n \in \BS{p}{pq}$ for all $n \geq 0$. 
	Then we can prove by induction on $n$ that $ \len{\Sigma}{w_n} \leq p( 2^{n+2} -3)$ but
	$w_{n} \eqBGpq t^{k}$ with $k \geq \tow_\pcBase(n)$.
	 For $p=1$, indeed, $w_{n} =_{\BGq} t^{\tow_\pcBase(n)}$. While the length of the word $w_n$ is only exponential in $n$, the length of its \Breduced 
	form is $\tow_\pcBase(n)$ (both  over $\Sigma$). 
\end{example}
	
\newcommand{\pg}{g}

\subsection{Conditions for Britton-reductions in $\BGpq$}

As in \cite{MattesWCC23} and \cite{MattesW22}, the idea to obtain a parallel algorithm for the word problem is to compute a Britton reduction of $uv$ given that both $u$ and $v$ are Britton-reduced. For this, we have to find a maximal suffix of $u$ that cancels with a prefix of $v$, \ie we search for the maximal $i_0$ such that $uv[i_0,i_0] \inBGpq \BSpq$. The formulas we prove in Lemma \ref{lem: TableCond}  and \cref{lem: FormulaAltW} below  are our main tools for finding such an $i_0$  and to obtain $uv[i_0,i_0]$ as a word in  $\BSpq$.

\begin{lemma}\label{lem: TableCond}
Let \[w=\beta_{i+1} w_i \beta_{i} x\beta_{i}^{-1}w_i'\beta_{i+1}^{-1}\] with $\beta_{i+1}, \beta_{i} \in \{b, b^{-1}\}$  and $w_i, w_i',x \in \BS{p}{pq} $ such that $\beta_{i+1} w_i \beta_{i} $ and $\beta_{i}^{-1}w_i'\beta_{i+1}^{-1}$ are Britton-reduced and $\beta_{i} x\beta_{i}^{-1}\inBGpq \BS{p}{pq}$. Then $w \in_{\BGpq} \BS{p}{pq}$ if and only if the respective condition in the following table is satisfied. Moreover, if $w\inBGpq \BS{p}{pq}$ then $w=_{\BGpq}\hat{w}$ according to the last column of the table.

	\begin{center}
		\renewcommand{\arraystretch}{1.2}
		\vspace{-1mm}
		\begin{tabular}[h]{cc|c|l}	
			$\beta_{i+1}$ & $\beta_{i}$ 
			& Condition  & \multicolumn{1}{c}{$\hat w$} \\
				\hline 				
		
			$b$ & $b$ & \hspace*{0.2mm}   $w_it^yw_i'\in_{\BGpq} \gen{a}$  & \hspace*{0.2mm}  $t^z$, $z=\numba{w_it^{-\hght{w_iw'_i}}w_i'}$ \\[0.8mm]
			
			$b^{-1}$ & $b^{-1}~$ & \hspace*{0.2mm} $w_ia^{y}w_i'\in_{\BGpq} \gen{t}$& \hspace*{0.2mm}  $a^z$, $z=\hght{w_iw'_i} $ \\[0.8mm]

			$b$ & $b^{-1}$  &\hspace*{0.2mm} $w_ia^{y}w_i' \in_{\BGpq} \gen{a}$  & \hspace*{0.2mm} $t^z$, $z=\numba{w_ia^{y}w_i'}$\\[0.8mm]
			
			$b^{-1}$ & $b$  &\hspace*{0.2mm} $w_it^{y}w_i' \in_{\BGpq} \gen{t}$& \hspace*{0.2mm} $a^z$, $z=\hght{w_iw'_i}+y$\\[0.8mm]
			
		\end{tabular}
	\vspace{-.5mm}
	\end{center}

\end{lemma}
\begin{proof}	We distinguish the two cases $\beta_{i} = b$ and $\beta_{i} = \oi{b}$. Each case consists of two sub-cases depending on $\beta_{i+1}$.
\smallskip

\textbf{Case $\beta_{i}=b$:} 
In this case, $\beta_{i} x\beta_{i}^{-1}=_{\BGpq}  t^y$ and $w=_{\BGpq}\beta_{i+1}w_it^{y}w_i'\beta_{i+1}^{-1}$ for some $y \in \Z$. 
If $\beta_{i+1}=b$, then $\beta_{i+1}$ cancels with $\beta_{i+1}^{-1}$ if and only if  
$w_it^{y}w_i' \in_{\BGpq} \gen{a}$. So $w \in_{\BGpq} \BS{p}{pq}$ implies that  $\hght{w_it^{y}w_i'}=\hght{w_iw'_i}+y=0$. So, $w\eqBSpr t^z$ with $z=\numba{w_it^{-\hght{w_iw'_i}}w_i'}$.  

If $\beta_{i+1}=b^{-1}$ then $\beta_{i+1}$ cancels with $\beta_{i+1}^{-1}$ if and only if $w_it^{y}w_i' \inBGpq \gen{t}$. So if $w \in_{\BGpq} \BS{p}{pq} $ then $w_it^{y}w_i'\eqBSpr t^{\hght{w_iw'_i}+y}$ ($\hght{w}$ does not change under Britton-reductions in $\Thepr$). Thus, $w =_{\BGpq}a^{\hght{w_iw'_i}+y}$. 
\medskip

\textbf{Case $\beta_{i}=b^{-1}$:} There is $y \in \Z$ such that $\beta_{i} x\beta_{i}^{-1}=_{\BGpq}  a^y$  and $w=_{\BGpq}\beta_{i+1}w_ia^{y}w_i'\beta_{i+1}^{-1}$. If $\beta_{i+1}=b$ then $\beta_{i+1}$ cancels with  $\beta_{i+1}^{-1}$ if and only if $w_ia^{y}w_i' \in_{\BGpq} \gen{a}$. So if $w \in_{\BGpq}  \BS{p}{pq} $ then $\beta_{i+1}w_ia^{y}w_i'\beta_{i+1}^{-1} \eqBGpq t^z$ with $z={\numba{w_ia^{y}w_i'}}$. 

In the case $\beta_{i+1}=b^{-1}$
we have that $\beta_{i+1}$ cancels with  $\beta_{i+1}^{-1}$ if and only if  $w_ia^{y}w_i' \in_{\BGpq} \gen{t}$. Thus if $w \in_{\BGpq} \BS{p}{pq} $ then $w_ia^{y}w_i'\eqBSpr t^{\hght{w_iw'_i}}$ because $\hght{w}$ does not change under Britton-reductions in $\Thepr$. Hence, $w=_{\BGpq}a^{\hght{w_iw'_i}}$.
\qed\end{proof}

Let $u,v \in \BGpq$. For the algorithm solving the word problem in \cref{thm:TC1PC} we need to check if $\beta_{i+1}$ cancels with $\beta^{-1}_{i+1}$  under the assumption that  $\beta_{i}$ cancels with $\beta^{-1}_{i}$. This is possible by checking the conditions in \cref{lem: TableCond}: If $\beta_{i}=\beta_{i+1}$, \cref{lem: TableCond} shows how to calculate $uv[i,i]$ if $uv[i,i] \inBGpq \BS{p}{pq}$. Otherwise, we  need to calculate $y$ in the table in \cref{lem: TableCond}. The formula in \cref{lem: FormulaAltW} gives us a tool for that:   If $i_0$ is maximal such that $\beta_{i_0}=\beta_{i_0+1}$, then for $i > i_0$ we have an alternating sequence of $b$ and $b^{-1}$. So in the following we consider words
\begin{align}\label{symmW}
    uv[i,i]=\underbrace{\beta w_i\beta^{-1}w_{i-1}\beta\cdots \beta^{\pm 1}[w_0}_{u}\underbrace{w'_0]\beta^{\mp 1} \cdots \beta^{-1} w_{i-1}'\beta w_i' \beta^{-1}}_{v}
\end{align}
with $\beta \in \{b,b^{-1}\}$, $w_i,w'_i \in \BSpq$ and $u,v$ Britton-reduced. 
If $uv[i,i] \inBGpq \BSpq$ then, because $u,v$ are Britton-reduced,  $uv[\ell,\ell] \inBGpq \BS{p}{pq}$ for every $0\leq \ell  \leq i$.  
In particular,  
\begin{align}\label{FormYi}
    w_\ell \beta^{- 1}w_{\ell-1}\cdots w_{\ell-1}' \beta w_\ell'=_{\BGpq}\alpha^{y_{\ell}}
\end{align}
with $y_{\ell} \in \mathbb{Z}$ and $\alpha \in \{a,t\}$. First, consider the case  $\beta =b$. Since $\beta^{-1}$ cancels with $\beta$,  $w_{\ell-1}uv[\ell-2,\ell-2]w'_{\ell-1} \inBGpq \gen{t}$  and thus
\[a^{y_{\ell}}=_{\BGpq}w_\ell b^{-1}t^{y_{\ell-1}} b w_\ell'=_{\BGpq}w_\ell a^{y_{\ell-1}}w_\ell'.\]
By \cref{lem: ConcOfwv}  we obtain that \begin{align*}
    y_{\ell}     &=\numba{w_\ell}+q^{\hght{w_\ell}}\left(y_{\ell-1}+ \numba{w'_\ell}\right).
\end{align*}
As an abbreviation, we write 
$\kappa^{(\ell)}=\numba{w_{\ell}}+q^{\hght{w_\ell}}\cdot \numba{w'_{\ell}} $.
\medskip

If $\beta =b^{-1}$ then,  since $\beta^{-1}$ cancels with $\beta$,  $w_{\ell-1}uv[\ell-2,\ell-2]w'_{\ell-1} \inBGpq´ \gen{a}$  and thus
\[t^{y_{\ell}}=_{\BGpq}w_\ell ba^{y_{\ell-1}} b^{-1} w_\ell'=_{\BGpq}w_\ell t^{y_{\ell-1}}w_\ell'.\]
Observe that $\hght{}$ does not change under Britton-reductions in \BS{p}{pq}. Thus, 
\[y_{\ell}=\hght{w_\ell}+\hght{w'_\ell}+y_{\ell-1} =\hght{w_\ell w'_\ell}+y_{\ell-1}, \text{ and}\]
\begin{align} \label{YiOddorEven} 
  y_{i-\ell} &= \hght{w_{i-\ell} w'_{i-\ell}}+y_{i-(\ell+1)}  &\text{ if } \beta=b \text{ and } \ell \text{  odd};    \text{ or } \beta=b^{-1} \text{ and } \ell \text{  even}  \\[2mm]
\nonumber    y_{i-\ell} &= y_{i-(\ell+1)}\cdot q^{\hght{w_{i-\ell}}}+\kappa^{(i-\ell)} &\text{ if }  \beta=b \text{ and } \ell \text{  even};  \text{ or  } \beta=b^{-1} \text{ and } \ell \text{ odd}.
\end{align}

\newcommand{\innerIndex}{\theta}
\newcommand{\innerInnerIndex}{\zeta}
\newcommand{\namesum}{\kappa}
 \newcommand{\XiMu}{\text{ \tiny $\ceil{\frac{\mu+1}{2}}$}}
 \newcommand{\FXiMu}{\text{ \tiny $\floor{\frac{\mu+1}{2}}$}}
\begin{lemma}\label{lem: FormulaAltW}
Let $u,v \in \Delpr$ be Britton-reduced and
\[uv[i,i]=bw_ib^{-1}w_{i-1}b\cdots b^{\pm 1}w_0w'_0b^{\mp 1} \cdots b^{-1} w_{i-1}'b w_i'b^{-1}\]
be as in (\ref{symmW}). If $uv[\ell-1,\ell-1] \inBGpq \BSpq$ we define $y_\ell$ as in (\ref{FormYi}). The following holds for all $0\leq \mu \leq i$: If $uv[i,i] \inBGpq \BS{p}{pq}$, then $uv[i,i]\eqBGpq t^z$ with
\begin{align} \label{FormulaUV}
z=\sum_{\innerIndex=1}^{\FXiMu}\hght{w_{\ell_{\innerIndex}}w'_{\ell_{\innerIndex}}} \cdot q^{\xi_{\innerIndex}} +\sum_{\innerIndex=1}^{\XiMu}\kappa^{(\ell_{\innerIndex}+1)}\cdot q^{\xi_{\innerIndex-1}}+y_{i-(\mu+1)} \cdot q^{\xi_{\XiMu}}
\end{align}
with $\ell_{\innerIndex}=i-(2\innerIndex-1)$ and  $\xi_{\innerIndex}=\sum_{j=1}^{\theta} \hght{w_{\ell_{j}+1}}$ and $y_{\mOne}=0$.

\end{lemma}

\begin{proof} 
We proof this by induction over $\mu$. By assumption,  $uv[i,i] \in_{\BGpq} \BS{p}{pq}$ and $u$ and $v$ are Britton-reduced. 
Thus,
$ t^{y_i}\eqBGpq bw_ib^{-1}t^{y_{i-1}}bw_i'b^{-1}$. By (\ref{YiOddorEven}), $y_i=\kappa^{(i)}+q^{\hght{w_i}}\cdot y_{i-1}.$
For $\mu=0$ this is the right sum in (\ref{FormulaUV}) and the left one is empty. This shows the formula for $\mu=0$. Now we assume that (\ref{FormulaUV}) is correct for $\mu$. We consider the cases $\mu$ odd and $\mu$ even separately.\\

If $\mu$ is odd, then $\ceil{\frac{\mu+2}{2}}=\frac{\mu+3}{2}$. So, $i-(2 \cdot \ceil{\frac{\mu+2}{2}}-1)+1=i-(\mu+1)$. Moreover, $\floor{\frac{\mu+1}{2}}=\floor{\frac{\mu+2}{2}}$. By induction and (\ref{YiOddorEven}), $uv[i,i]\eqBGpq t^z$ with  
\begin{align*}
   z&= \sum_{\innerIndex=1}^{\FXiMu}\hght{w_{\ell_{\innerIndex}}w'_{\ell_{\innerIndex}}} \cdot q^{\xi_{\innerIndex}} +\sum_{\innerIndex=1}^{\XiMu}\kappa^{(\ell_{\innerIndex}+1)}\cdot q^{\xi_{\innerIndex-1}}+y_{i-(\mu+1)} \cdot q^{\xi_{\XiMu}}\\
   &=\sum_{\innerIndex=1}^{\FXiMu}\hght{w_{\ell_{\innerIndex}}w'_{\ell_{\innerIndex}}} \cdot q^{\xi_{\innerIndex}} +\sum_{\innerIndex=1}^{\XiMu}\kappa^{(\ell_{\innerIndex}+1)}\cdot q^{\xi_{\innerIndex-1}} \\
   &\qquad +(y_{i-(\mu+2)}\cdot q^{\hght{w_{i-(\mu+1)}}} +k^{(i-(\mu+1))}) \cdot q^{\xi_{\XiMu}}\\[1mm]
   &=\sum_{\innerIndex=1}^{\text{ \tiny $\floor{\frac{\mu+2}{2}}$}}\hght{w_{\ell_{\innerIndex}}w'_{\ell_{\innerIndex}}} \cdot q^{\xi_{\innerIndex}} +\sum_{\innerIndex=1}^{\text{ \tiny $\ceil{\frac{\mu+2}{2}}$}}\kappa^{(\ell_{\innerIndex}+1)}\cdot q^{\xi_{\innerIndex-1}}+
   y_{i-(\mu+2)}\cdot q^{\xi_{\tiny \ceil{\frac{\mu+2}{2}}}} 
\end{align*}

If $\mu$ is even, then $\ceil{\frac{\mu+1}{2}}=\ceil{\frac{\mu+2}{2}}=\floor{{\frac{\mu+2}{2}}}$ and $2\cdot \floor{\frac{\mu+2}{2}}-1=\mu+1$. Again  using  induction and (\ref{YiOddorEven}) we obtain that $uv[i,i]\eqBGpq t^z$ with 
\begin{align*}
     z&= \sum_{\innerIndex=1}^{\floor{\frac{\mu+1}{2}}}\hght{w_{\ell_{\innerIndex}}w'_{\ell_{\innerIndex}}}  \cdot q^{\xi_{\innerIndex}} +\sum_{\innerIndex=1}^{\ceil{\frac{\mu+1}{2}}}\kappa^{(\ell_{\innerIndex}+1)}\cdot q^{\xi_{\innerIndex-1}}+y_{i-(\mu+1)} \cdot q^{\xi_{\ceil{\frac{\mu+1}{2}}}}\\
     &= \sum_{\innerIndex=1}^{\floor{\frac{\mu+1}{2}}}\hght{w_{\ell_{\innerIndex}}w'_{\ell_{\innerIndex}}} \cdot q^{\xi_{\innerIndex}} +\sum_{\innerIndex=1}^{\ceil{\frac{\mu+1}{2}}}\kappa^{(\ell_{\innerIndex}+1)}\cdot q^{\xi_{\innerIndex-1}}\\
     &\qquad+(\hght{w_{i-(\mu+1)}w'_{i-(\mu+1)}}+y_{i-(\mu+2)}) \cdot q^{\xi_{\ceil{\frac{\mu+1}{2}}}}\\[1mm]
     &= \sum_{\innerIndex=1}^{\floor{\frac{\mu+2}{2}}}\hght{w_{\ell_{\innerIndex}}w'_{\ell_{\innerIndex}}}  \cdot q^{\xi_{\innerIndex}} +\sum_{\innerIndex=1}^{\ceil{\frac{\mu+2}{2}}}\kappa^{(\ell_{\innerIndex}+1)}\cdot q^{\xi_{\innerIndex-1}}+y_{i-(\mu+2)} \cdot q^{\xi_{\ceil{\frac{\mu+2}{2}}}}\\
\end{align*}
    This shows the lemma. 
\qed\end{proof}

\subsection{Power circuit representations}
In this and the following sections we deal with Britton reductions both in the Baumslag group and the Baumslag-Solitar group. We write \BSpq-Britton reduction (resp.\ \BSpq-Britton-reduced) for Britton reductions in the Baumslag-Solitar group $\BSpq$ over the alphabet $\Theta$, while simply Britton reduction (resp.\ Britton-reduced) refers to Britton reductions in the Baumslag group over the alphabet $\Delta$.

We also want to represent elements $w \in \Delpr$ using markings in reduced power circuits. 
 So we need the following definition: 
\begin{definition}\label{def: redPCBG}
Let $(\Gamma, \delta)$ be a reduced power circuit. 
\begin{enumerate}[(a)]
        \item \label{PCreprDel} Let $u=u_1 \cdots u_k \in \Delpr$. We call $\mathcal{U}=(\mathcal{W}_i, B_i ,T_i)_{i \in \interval{1}{k}}$ a \redpc (reduced power circuit representation) of $u$ over $(\Gamma, \delta)$ if the following holds for all $i \in \interval{1}{k}$: 
    \begin{itemize}
       \item If $u_i \in \BS{p}{pq}\setminus \{1\}$, then $\mathcal{W}_i=(X^{(i)}_\ell,K^{(i)}_\ell)_{\ell}$ is a \redpc of  a \BSpq-Britton-reduced word over $\Theta$ representing $u_i$ over $(\Gamma, \delta)$  and $\mathcal{W}_i$ is the empty sequence otherwise, 
        \item $B_i=u_i$ if $u_i \in \{b, b^{-1}\}$ and $B_i=\$$ otherwise, 
        \item $T_i$ is a compact marking on $\Gamma$ with $\eps(T_i)=\hght{u_i}$ if $u_i \in \BS{p}{pq}$ and $T_i=\emptyset$ otherwise. 
    \end{itemize} 
       \item Let $\mathcal{U}$ be a \redpc  of $u=u_1 \cdots u_k \in \Delpr$. We define  $\msumkt{\mathcal{U}}=\sum_{i=1}^k \left(\abs{\sigma(T_i)}+\msumk{\mathcal{W}_i} \right)$.
\item For a \redpc $\mathcal{U}$ of a word $u \in \Delpr$ as in (\ref{PCreprDel}) we write $\len{\Theta}{\mathcal{U}}=\sum_{i=1}^{k} \len{\Theta}{\mathcal{W}_i}$.  

 \end{enumerate}
\end{definition}

Note that, as in \cref{def: PCrepBS}, the term ``reduced'' in  \redpc refers to the power circuit (and does not mean that the word $z \in \Delpr$ has to be Britton-reduced). However, be aware that each single $u_i \in \BS{p}{pq}\setminus \{1\}$ is, indeed, required to be represented by a \emph{$\BSpq$-Britton-reduced} \redpc.

\begin{remark}\label{rem:Tnecessary}
To \BSpq-Britton-reduce a word $uv \in \Thepr$ with $u,v \in \Thepr$ already \BSpq-Britton-reduced, we first make $uv$ \taligned, then \BSpq-Britton-reduce the resulting word and then ``reverse'' making it \taligned (\cref{lem:BRinBSinTC}). We might add up to $(\len{\Theta}{uv})^2$ markings, so this might increase $\sum \abs{\sigma(X_i)}$.    In the formulas in \cref{lem: FormulaAltW} and \cref{lem: TableCond}  we only need $\hght{w_i}$ and not $\eps(X_j)$. Moreover, $\hght{w_i}$ does not change under \BSpq-Britton-reductions, so we can represent $\hght{w_i}$ by the marking $T_i$ instead of using $\sum X_j$. This helps us to keep the power circuits small in our Britton-reduction algorithm in \cref{lem:Bredstep}  below.  
\end{remark}

\subsection{The algorithm for the word problem of $\BGpq$}

\begin{lemma} \label{lem:Bredstep}
The following is in $\uTC{0}$: \compproblem{\Redpc's $\mathcal{U}^{(i)}=(\mathcal{W}^{(u,i)}_\ell, B_\ell^{(u,i)},T_\ell^{(u,i)})_{\ell \in \interval{1}{\len{\Delta}{u_i}}}$ and $\mathcal{V}^{(i)}=(\mathcal{W}^{(v,i)}_\ell,  B_\ell^{(v,i)},T_\ell^{(v,i)})_{\ell \in \interval{1}{\len{\Delta}{v_i}}}$ of Britton-reduced words $u_i,v_i \in \Delpr$ over $(\Gamma, \delta)$ for  $i \in \interval{1}{\nu}$. }{\Redpc's $\mathcal{Z}^{(i)}=( \mathcal{W}^{(z,i)}_{\ell}, B_\ell^{(z,i)},T_\ell^{(z,i)})_{\ell \in \interval{1}{\len{\Delta}{z_i}}}$ for  Britton-reduced words $z_i \in \Delpr$ with $z_i=_{\BGpq}u_iv_i$ over $(\Gamma', \delta')$  for  $i \in \interval{1}{\nu}$ such that 
\begin{itemize}
    \item $\abs{\Gamma'} \leq \abs{\Gamma}+c\cdot\left( \mathcal{S}+\abs{\mathcal{C}_{\Gamma}}\right)\cdot \ceil{\log(\mathcal{M})}$
    \item $\abs{\mathcal{C}_{\Gamma'}}\leq \abs{\mathcal{C}_{\Gamma}}+c \cdot \mathcal{S} $
    \item $\msumkt{\mathcal{Z}^{(i)}} \leq \msumkt{\mathcal{U}^{(i)}}+\msumkt{\mathcal{V}^{(i)}}$ for each $i \in \interval{1}{\nu}$
    \item $ \len{\Theta}{\mathcal{Z}^{(i)}} \leq \len{\Theta}{\mathcal{U}^{(i)}}+\len{\Theta}{\mathcal{V}^{(i)}}$ for each $i \in \interval{1}{\nu}$.
\end{itemize}
for some constant $c$ and $\mathcal{S}=\sum_{\ell=1}^{\nu}\left(\msumkt{\mathcal{U}^{(i)}}+\msumkt{\mathcal{V}^{(i)}}\right)$ and $ \mathcal{M}=\max_{i \in \interval{1}{\nu}}(\len{\Theta}{\mathcal{U}^{(i)}}+\len{\Theta}{\mathcal{V}^{(i)}})$.
} 
\end{lemma}

Note that the last two points mean, in particular, that $\sum_{\ell=1}^{\nu}\msumkt{\mathcal{Z}^{(i)}} \leq \mathcal{S}$
and $ \max_{i \in \interval{1}{\nu} }(\len{\Theta}{\mathcal{Z}^{(i)}}) \leq \mathcal{M}$.

\begin{proof}
	\newcommand{\bit}{\lambda}
As we did for \cref{lem:BRinBSinTC}, we describe the proof for the case that $\nu=1$ writing $u= u^{(1)}$, $v= v^{(1)}$ etc.\ with $u,v$ written as in (\ref{betaFact}) and corresponding \redpcs $\cU$, $\cV$.
 The general case follows then by the observation that, by \cref{lem: add} and \cref{lem: floatpOp}, the operations \addition and \powertwo allow the manipulation of several (bunches of) markings on the same reduced power circuit independently parallel (see also the remark at the end of the proof of \cref{lem:BRinBSinTC}). 

For $u,v,w_i, uv[i,i]$ etc.\ we will use the notation we introduced in  (\ref{betaFact}) and (\ref{betaFactIJ}). Be aware that  (\ref{betaFact}) and (\ref{betaFactIJ}) use different indices as the statement of the lemma.
Moreover, by $T^{(u)}_j$ (resp $T^{(v)}_j$) we denote the $T$ in  the \redpc corresponding to $w_j$ (resp. $w'_j$)~-- again be aware of the change in the indices. 
In order to find the Britton reduction for $uv$, we need to find the maximal $i$ such that $uv[i,i] \inBGpq \BS{p}{pq}$. 

This approach is similar to \cite[Lemma 5.9]{MattesWCC23}.

The algorithm consists of three steps.
\begin{enumerate}
	\item For each $i$ we compute a compact marking $Y_i$ such that, if $uv[i,i] \inBGpq \BS{p}{pq}$, then $uv[i,i]\eqBGpq\alpha^{\eps(Y_i)}$ for $\alpha \in \{a,t\}$. Note that, if $uv[i,i] \not\inBGpq \BS{p}{pq}$, then $Y_i$ is still a marking but we do not care about its evaluation. If $uv[i,i] \inBGpq \BS{p}{pq}$, then, since $u,v$ are Britton-reduced, $uv[i,i]$ is indeed in $\gen{a}\cup \gen{t}$.   This is the step where most of the work of the algorithms happens.
	\item For each $i$ we compute a boolean value $\bit_i$ such that $\bit_i$ is true if and only if $uv[i-1,i-1] \inBGpq \BS{p}{pq}$ implies $uv[i,i] \inBGpq \BS{p}{pq}$ (thus, again, if $uv[i,i] \not\inBGpq \BS{p}{pq}$, then we do not care about the value of $\bit_i$).   
	\item We search for the maximal $i_0$ such that $\bit_j$ is true for all $j \leq i_0$ and return a \redpc for the word $z=\beta_n w_{n}\cdots \beta_{i_0+2} z_{i_0 + 1}\beta'_{i_0+2} \cdots w'_{n} \beta'_n$ where $z_{i_0 + 1}\eqBSpr w_{i_0+1}\alpha^{\eps(Y_{i_0})}w'_{i_0+1}$.
\end{enumerate}

We will perform these computations by using 
a constant number of (sequential) applications of the \OpLem, which allow addition and multiplication by the power circuit base independently in parallel for several (bunches of) markings.

\paragraph{Step 1.}
Our goal is to compute a compact marking $Y_i$ for each $i $ such that $uv[i,i] \inBGpq \BS{p}{pq}$ implies $uv[i,i]\eqBGpq\alpha^{\eps(Y_i)}$ for $\alpha \in \{a,t\}$  (depending on $\beta_{i+1}$).
We compute $Y_i$ for each $i$ in a different copy of our power circuit. In our computations, we use the markings $T_\ell^{(\mu)}$  to represent $\hght{w^{(\mu)}_\ell}$, $\mu \in \{u,v\}$ instead of sums of the $X^{(\mu, \ell)}_{r}$ (which is important as outlined in  \cref{rem:Tnecessary}).

Let us write $y_i = \eps(Y_i)$.
If $\beta'_{j} \neq \beta_j ^{-1}$ for some $j \leq i+1$, as both $u$ and $v$ are Britton-reduced, we know that $uv[i,i] \not\inBGpq \BS{p}{pq}$ and, hence, can safely set $Y_i = \emptyset$.
Otherwise, we use (the rightmost column of) \cref{lem: TableCond} and, if necessary, \cref{lem: FormulaAltW}:


We first search for the maximal $j \in \interval{1}{i} $ such that $\beta_{j+1}=\beta_{j}$.
 If no such $j$ exists, we take $j=-1$ and set $uv[-1,-1]= 1$. So we are in one of the first two cases of the table in \cref{lem: TableCond}.
 In the first case ($\beta_{j+1}=\beta_{j} = b$), we can compute $y_j = \cA(w_j t^{-\hght{w_jw'_j}} w_j')$ and in the second case (\ie $\beta_{j+1}=\beta_{j} = b^{-1}$), we have $y_j = \hght{w_jw'_j}$. Now, $\hght{w_jw'_j}$ can be computed as $\eps(T^{(u)}_{j})+\eps(T^{(v)}_{j})$ and then $\cA(w_j t^{-\hght{w_jw'_j}} w_j')$ via  \cref{lem:computeAh} using constantly many layers of \addition and \powertwo. 
   Thus, under the assumption $uv[j,j] \inBGpq \BS{p}{pq}$, using  constantly many layers of \addition and \powertwo, we can compute a marking $Y_j$ such that $uv[j,j]\eqBGpq\alpha^{\eps(Y_j)}$.

If $j=i$ we are finished with Step 1. 
Otherwise, first assume that $\beta_{i+1} = b$.
 %
%
   Now we apply \cref{lem: FormulaAltW} to $\tilde w = bw_{i}\cdots \beta_{j+2}w_{j+1}\alpha^{\eps(Y_j)}w'_{j+1}\beta'_{j+2}\cdots w'_{i}b^{-1}$. 
   The lemma tells us that from $Y_j$ and the markings $K^{(\mu,\ell)}_{r}$,  $T^{(\mu)}_{\ell}$, (for $\mu \in \{u,v\}$, $\ell \in \interval{j+1}{i}$ and suitable $r$s), we can compute a marking $Y_i$ with $\tilde w \eqBGpq t^{\eps(Y_i)}$ by a layer of  \addition operations followed by a layer of \powertwo operations and then again a layer of \addition operations.
 
 Finally, if $\beta_{i+1} = b^{-1}$, we first compute $Y_{i-1}$ and then we obtain $Y_{i}$ by again using the last column of  \cref{lem: TableCond} (note that for $Y_{i-1}$ we are in the case $\beta_{i}=b$).

\paragraph{Complexity of Step 1.}
Looking at the formulas we see that we only need constantly many layers of \addition and \powertwo (including multiplications by powers of $-q$, see \cref{rem:MultByMinQ}).
To give some more details, let ${\cU\cV[i,i]}$ denote the \redpc of $uv[i-1,i-1]$ given by composing the appropriate suffix of $\mathcal{U}$ with a prefix of $\mathcal{V}$.
 
    The crucial observation is that each marking $K^{(\mu, \ell)}_{r}$,  $T^{(\mu)}_{\ell}$ appears at most once outside of an exponent (\ie as ``mantissa'') multiplied by a suitable power of $q$ in the sum in \cref{lem: FormulaAltW} or the formulas \cref{lem: TableCond}. 
     Note that, since we have the markings $T_\ell^{(\mu)}$, we do not use the $X^{(\mu, \ell)}_{r}$ markings outside of an exponent. 
     Moreover, by \cref{lem: powtwo}, multiplying by a power of $q$ does not increase the size of the support of a marking. This together with the previous observation, by \cref{lem: add} and \cref{lem:AddMult}, implies that $\abs{\sigma(Y_{i})}\leq \msumkt{\cU\cV[i,i]}$.


  From this, in turn, we conclude by \cref{lem: powtwo} that there is a constant $c$ such that we insert at most $c \cdot\msumkt{\cU\cV[i,i]}$ new chains during all layers of \powertwo used in Step 1. 
    Because the length of words in $\Thepr$ does not increase during \BSpq-Britton-reductions (\cref{lem:BRinBSinTC}), each sum has length at most $\len{\Theta}{\cU\cV[i,i]}$. Thus, by the \OpLem we can take $c$ such that we can construct $Y_{i}$ in a reduced power circuit $(\Gamma_i, \delta_i)$ with
\begin{align} \label{SizeGammaOne} 
   \abs{\sigma(Y_{i})}&\leq \msumkt{\cU\cV[i,i]}\nonumber\\[0.7mm] 
   \abs{\mathcal{C}_{\Gamma_i}}
   & \leq \abs{\mathcal{C}_{\Gamma}}+c\cdot (\msumkt{{\cU\cV[i,i]}}) \\[0.7mm] \nonumber
     \abs{\Gamma_i} &\leq \abs{\Gamma}+c \cdot\left(\msumkt{\cU\cV[i,i]}+ \abs{\mathcal{C}_{\Gamma}} \right)\cdot \left(\ceil{\log(\len{\Theta}{\cU\cV[i,i]})}+1\right).
\end{align}

\paragraph{Step 2.}

Now, we assume that  $uv[i-1,i-1] \eqBGpq\alpha_i^{y_{i-1}}$ for $\alpha_i \in \{a,t\}$ and $y_{i-1} \in \mathbb{Z}$ and aim for computing the bit $\bit_i$ indicating whether $uv[i,i] \inBGpq \BS{p}{pq}$ (if the assumption is not met, the bit can be arbitrary).
In order to do so, we want to apply the middle column in the table in \cref{lem: TableCond}.
 For this, we need to decide whether either $w_i\alpha_i^{y_{i-1}}w'_i \inBGpq  \gen{t}$ or $w_i\alpha_i^{y_{i-1}}w'_i \inBGpq  \gen{a}$ depending on $\beta_{i+1}$.

In order to decide whether $w_i\alpha_i^{y_{i-1}}w'_i \inBGpq \gen{a}$, we need to compute the \BSpq-Britton-reduced \redpc for $w_i\alpha_i^{y_{i-1}}w'_i$, which can be done using \cref{lem:BRinBSinTC}. Be aware that \cref{lem:BRinBSinTC} only allows us to compute the \BSpq-Britton-reduction of the concatenation of two already \BSpq-Britton-reduced \redpc{}s, while  $w_i\alpha_i^{y_{i-1}}w'_i$ is the concatenation of \emph{three} \BSpq-Britton-reduced \redpc{}s. This problem can be easily circumvented by  applying the lemma first to $w_i\alpha_i^{y_{i-1}}$ and then to the concatenation of the outcome and $w_i'$. Finally, one needs to check whether the outcome contains some non-trivial $t$-power.

In order to decide if $w_i\alpha_i^{y_{i-1}}w'_i \inBSpr \gen{t}$, we check if $w_i\alpha_i^{y_{i-1}}w'_i t^{-\hght{w_i\alpha_i^{y_{i-1}}w'_i}} =_{\BSpq} 1$. As above, this can be done by using \cref{lem:BRinBSinTC} three times (as $w_i\alpha_i^{y_{i-1}}w'_i t^h$ consists of four \BSpq-Britton-reduced words) and then checking whether the outcome is the empty word.

During this whole process we can use separate reduced power circuits for each $i$ (as we do for calculating the $Y_i$). We discard them after the checking is done, only remembering the $\lambda_i$. So, this checking step is possible in  $\uTC{0}$ by the \OpLem and \cref{lem:BRinBSinTC}.  
In summary, this gives us Boolean values indicating whether $uv[i-1,i-1] \inBGpq \BS{p}{pq}$ implies $uv[i,i] \inBGpq \BS{p}{pq}$.

\paragraph{Step 3.}
 Now, we only have to find the maximal $i_0$ such that for all $i \leq i_0$ this implication is true. Since $uv[\mOne,\mOne] = 1 \in \BS{p}{pq}$, it follows inductively that  $uv[i,i] \inBGpq \BS{p}{pq}$ for all $i\leq i_0$. Moreover, as the implication $uv[i_0 ,i_0 ]  \inBGpq \BS{p}{pq} \implies uv[i_0 +1, i_0 +1]  \inBGpq \BS{p}{pq}$ fails, we have $uv[j,j] \not\inBGpq \BS{p}{pq}$ for $j \geq i_{0}+1$.
\medskip

Finally, using the same procedure as we did in Step 2 above, we construct a \redpc $\mathcal{W}^{(z)}_{i_0+1}$ of a $\BSpq$-Britton-reduced word $z_{i_0 +1} \in \Thepr$ such that $z_{i_0+1}\eqBSpr w_{i_0+1}\alpha^{\eps(Y_{i_0})}w'_{i_0+1}$.
 We use \cref{lem:BRinBSinTC} to compute \BSpq-Britton-reductions, which guarantees that $\len{\Theta}{z_{i_0+1}} \leq \len{\Theta}{w_{i_0+1}\alpha^{\eps(Y_{i_0})}w'_{i_0+1}}$.
We compute the new marking $T^{(z)}_{i_{0}+1}$  by  $\eps(T^{(z)}_{i_{0}+1})=\eps(T^{(u)}_{i_0+1})+\eps(T^{(v)}_{i_{0}+1})+ \eps(Y_{i_0})$ if $\alpha = t$ and $\eps(T^{(z)}_{i_{0}+1})=\eps(T^{(u)}_{i_{0}+1})+\eps(T^{(v)}_{i_{0}+1})$ if $\alpha = a$.
Then the word
\begin{align*}
    z=\beta_n w_{n}\cdots \beta_{i_0+2} z_{i_0+1}\beta'_{i_0+2} \cdots w'_{n} \beta'_n
\end{align*}
is Britton-reduced with $z=_{\BGpq} uv$.
We only keep the power circuit $(\Gamma_{i_0}, \delta_{i_0})$ from Step 1, which represents $y_{i_0}$ and work on it to compute the \redpc for $z_{i_0+1}$.

To obtain a \redpc $\mathcal{Z}=(\mathcal{W}^{(z)}_\ell,B_\ell^{(z)}, T_\ell^{(z)})_{\ell \in \interval{1}{\len{\Delta}{z}}}$  of $z$, in the \redpc for $uv$ we remove all markings used to represent $w_{i_0+1}uv[i_0,i_0]w'_{i_0+1}$ and use the \redpc of $z_{i_0+1}$ we just constructed instead (and change the indices appropriately).

\paragraph{Complexity of Step 3.}
 We have shown (see Step 1 and 2)  that the \redpc for $ z_{i_0+1}$ can be constructed using constantly many layers of \addition and \powertwo (including multiplications by powers of $-q$, see \cref{rem:MultByMinQ}).

We have the size bound \eqref{SizeGammaOne} for $(\Gamma_{i_0}, \delta_{i_0})$ from Step 1. Let $(\Gamma',\delta')$ denote the power circuit resulting from Step 3. By \cref{lem:BRinBSinTC}  and \eqref{SizeGammaOne} we obtain the following bound (for some suitable constant $c$ different from the one in \eqref{SizeGammaOne}):
  \begin{align*}
 \abs{\Gamma'} &\leq \abs{\Gamma}+c\cdot\left( \mathcal{S}+\abs{\mathcal{C}_{\Gamma}}\right)\cdot \ceil{\log(\mathcal{M})}, \\
\abs{\mathcal{C}_{\Gamma'}}&\leq \abs{\mathcal{C}_{\Gamma}}+c \cdot \mathcal{S}. 
 \end{align*}

To see the bound  $\msumkt{\mathcal{Z}} \leq \msumkt{\mathcal{U}}+\msumkt{\mathcal{V}}$, first consider the case that $\alpha = t$.
Then we have
 \begin{align*}
	\abssmall{\sigma(T^{(z)}_{i_{0}+1})} &\leq  \abssmall{\sigma(T^{(u)}_{i_0+1})} + \abssmall{\sigma(T^{(v)}_{i_0+1})} + \abssmall{\sigma(Y_{i_0})}\\
	&\leq \abssmall{\sigma(T^{(u)}_{i_0+1})} + \abssmall{\sigma(T^{(v)}_{i_0+1})} + \msumkt{\cU\cV[i_0,i_0]}\tag{by \eqref{SizeGammaOne}}
	\shortintertext{and}
	\msumk{\mathcal{W}^{(z)}_{i_0+1}} &\leq \msumk{\mathcal{W}^{(u)}_{i_0+1}} + \msumk{\mathcal{W}^{(v)}_{i_0+1}} \tag{by \cref{lem:BRinBSinTC}}
\end{align*}
yielding that
 \begin{align*}
		\msumk{\mathcal{W}^{(z)}_{i_0+1}} + 	\abssmall{\sigma(T^{(z)}_{i_{0}+1})}&\leq \msumkt{\cU\cV[i_0 + 1,i_0 + 1]}
\end{align*}
As we obtain $z$ by replacing $w_{i_0+1}uv[i_0,i_0]w_{i_0+1}'$ by $z_{i_0+1}$, the bound  $\msumkt{\mathcal{Z}} \leq \msumkt{\mathcal{U}} + \msumkt{\mathcal{V}}$ follows.

The case $\alpha = a$ can be seen similarly using the fact that $Y_{i_0}$ is then only used in the computation of $\msumk{\mathcal{W}^{(z)}_{i_0+1}}$ but not $T^{(z)}_{i_{0}+1}$.

 Finally, we have $ \len{\Theta}{\mathcal{Z}^{(i)}} \leq \len{\Theta}{\mathcal{U}^{(i)}}+\len{\Theta}{\mathcal{V}^{(i)}}$ because $w_{i_0+1}uv[i_0,i_0]w_{i_0+1}'$ is replaced by the \BSpq-Britton-reduction of $w_{i_0+1}\alpha^{\eps(Y_{i_0})}w'_{i_0+1}$, which by \cref{lem:BRinBSinTC} is not longer.
\qed\end{proof}
\begin{theorem}\label{thm:TC1PC} The following is in $\uTC{1}$:
	\compproblem{A \redpc  $\mathcal{Z}=(\mathcal{W}_i,  B_i, T_i)_{i \in \interval{1}{\len{\Delta}{z}}}$ for a word $z \in \Delta^{*}$  over $(\Gamma, \delta)$. }{A \redpc $\cZ^{(r)}$ for a Britton-reduced word $z_{red} \in \Delta^{*}$ over  $(\Gamma^{(r)}, \delta^{(r)})$ such that $z_{red}=_{\BGpq}z$ and there is a constant $c$ such that
$\abssmall{\Gamma^{(r)}} \leq \abs{\Gamma}+c\cdot \len{\Theta}{\mathcal{Z}} \cdot \log(\len{\Theta}{\mathcal{Z}})^3 \cdot \abs{\Gamma}.$
} 
\end{theorem}
\begin{proof}
	Let $z=z_1 \cdots z_n$ with $z_i \in \Delta \cup \{1\}$ (so we can use $1$ as padding symbol). We proceed in a tree-shape manner: we Britton-reduce all factors of length two, then all factors of length four of $z$ (as word over $\Delta$) and so on using \cref{lem:Bredstep}. Observe that this approach is similar to \cite[Theorem 5.10]{MattesWCC23}. 

W.l.o.g. we assume that $n$ is a power of two and write $z_i=z^{(0)}_i$. Observe that by definition, $\mathcal{Z}^{(0)}_i$ is a \redpc of a $\BSpq$-Britton-reduced word  representing  $z^{(0)}_i$ if $z_i \in \BSpq$. \\

We assume that we already have constructed \redpcs  $\cZ^{(k)}_{j}$ of Britton-reduced words $z^{(k)}_{j} \in \Delta^*$ for $j \in \interval{1}{n/2^k}$ all over $(\Gamma^{(k)}, \delta^{(k)})$  such that \\ $\sum_{j=1}^{n/2^k}\msumkt{\cZ_j^{(k)}} \leq \msumkt{\cZ}$. 
Using this, by \cref{lem:Bredstep} we can construct in $\uTC{0}$ \redpcs  $\cZ^{(k+1)}_{j}$ of Britton-reduced words $z_j^{(k+1)}=_{\BGpq}z_{2j-1}^{(k)}z_{2j}^{(k)}$ all over the same power circuit  $(\Gamma^{(k+1)}, \delta^{(k+1)})$,  with 
\begin{align}
\label{AbschMskt}	\sum_{j=1}^{n/2^{k+1}}\msumkt{\cZ_j^{(k+1)}}& \leq \sum_{j=1}^{n/2^{k+1}}(\msumkt{\cZ_{2j-1}^{(k)}}+\msumkt{\cZ_{2j}^{(k)}}) \leq\msumkt{\cZ}
	\shortintertext{and} 
\label{AbschLeng}	\sum_{j=1}^{n/2^{k+1}}\len{\Theta}{\cZ^{(k+1)}_j}&\leq \sum_{j=1}^{n/2^{k+1}}(\len{\Theta}{\cZ_{2j-1}^{(k)}}+\len{\Theta}{\cZ_{2j}^{(k)}}) \leq \len{\Theta}{\cZ}.
\end{align}
Moreover, by (\ref{AbschMskt}), 
\cref{lem:Bredstep} implies that in step $k$ (constructing the \redpc over the power circuit $(\Gamma^{(k)}, \delta^{(k)})$) we introduce at most $c \cdot  \msumkt{\cZ}$ new chains. Because of (\ref{AbschLeng}) each sum has length at most $\len{\Theta}{\cZ}$. Thus, in step $k$ we introduce at most $k \cdot c \cdot 
\msumkt{\cZ}\cdot \left(\ceil{\log(\len{\Theta}{\cZ})}+1\right)$ new nodes. In both cases, $c$ is a suitable constant. This implies that there is a constant $c$ such that
\begin{align*}
	\abs{C_{\Gamma^{(k)}}}
&	 \leq  \abs{C_{\Gamma}}+k \cdot c \cdot 
	\msumkt{\cZ}, \\[1mm]
	\abssmall{\Gamma^{(k)}}
	& \leq \abs{\Gamma}+k\cdot c\cdot\left( \abs{C_{\Gamma}}+k \cdot c \cdot 
	\msumkt{\cZ}\right)\cdot \left(\ceil{\log(\len{\Theta}{\cZ})}+1\right). 
\end{align*}
Observe that $\msumkt{\cZ}\leq \len{\Theta}{\cZ}\cdot \abs{\Gamma}$ (if $z_i \in \BSpq$ and $z_i \in \gen{a}$, then the markings $X_i$ and $T_i$ are the empty marking). 
Because we only construct power circuits $(\Gamma^{(k)}, \delta^{(k)})$ for $k \leq \log(\len{\Delta}{z})$ and $\len{\Delta}{z} \leq \len{\Theta}{\cZ}$,  for some large enough $c'$ we have that
\begin{align*}
\abssmall{\Gamma^{(k)}} 
&\leq  \abssmall{\Gamma}+c'\cdot \len{\Theta}{\cZ}\cdot \log(\len{\Theta}{\cZ})^3 \cdot \abs{\Gamma}
\end{align*} 
 for all occurring power circuits $(\Gamma^{(k)}, \delta^{(k)})$, including $(\Gamma^{(r)}, \delta^{(r)})$. 
This, firstly, shows the bound on $\abssmall{\Gamma^{(r)}}$ stated in the theorem and, secondly, establishes that the inputs of all subsequent stages of the circuit  from \cref{lem:Bredstep} are of size polynomial in the input. So, each step is in $\uTC{0}$. Therefore, as the whole algorithm consists of $\ceil{\log(\len{\Delta}{z})}$ many $\uTC{0}$ steps, it is in $\uTC{1}$.
\qed\end{proof}

Recall that $\Sigma=\{a, a^{-1},b,b^{-1},t,t^{-1}\}$.
\begin{corollary} \label{cor:TC1} 
The following is in $\uTC{1} $: 
\compproblem{A word $z \in \Sigma^*$. }{A \redpc $\cZ^{(r)}$ for a Britton-reduced word $z_{red} \in \Delta^*$ over  $(\Gamma, \delta)$ such that $z_{red}=_{\BGpq}z$ and there is a constant $c$ such that
\[\abssmall{\Gamma} \leq c\cdot\len{\Sigma}{z} \cdot \log(\len{\Sigma}{z} )^3 .\]  }
\end{corollary}
\begin{proof}
	Let $\len{\Sigma}{z}=n$ and $z=z_1\cdots z_n$ with $z_i \in \Sigma$. Let $(\Gamma, \delta)$ be the reduced power circuit consisting of exactly one node.  
	We construct a \redpc $\cZ = (\cW_j, B_j,T_j)_{
		j\in \interval{1}{n}}$ of $z$ in a straightforward way. Here, $\cW_j$ is just the tuple $(X^{(j)},K^{(j)})$ for two markings $X^{(j)},K^{(j)}$.
	 
	If $z_{j} = \beta$ with $\beta \in \oneset{b, b^{-1}}$,  we set  $ B_j = \beta$ and $T_j=X^{(j)}= K^{(j)}=\emptyset$.  
	If $z_j \in \oneset{a,a^{-1}}$ we define $K^{(j)}$ on $\Gamma$ such that $\eps(K^{(j)})=\pm 1$ and set $X^{(j)}=T_j=\emptyset$ and $B_j=\$$. 
	If $z_j \in \oneset{t,t^{-1}}$ we define $X^{(j)}$ and $T_j$ on $\Gamma$ such that $\eps(X^{(j)})=\eps(T_j)=\pm 1$ and we set $K^{(j)}=\emptyset$ and $B_j=\$$.
	The corollary follows using \cref{thm:TC1PC}.
\qed\end{proof}

Now we are ready to prove Theorem~\ref{thm:WPinBG}. 

\begin{corollary}[Theorem~\ref{thm:WPinBG}]
	For every $p,q \in \Z$ with  $\abs{p}, \abs{q} \geq 1$ the word problem of the Baumslag group $\BGpq$ is in $\uTC{1}$.
\end{corollary}

\begin{proof} 
On input of a word $z \in \Sigma^*$ we  construct the \redpc of a Britton-reduced word $z_{red}\in \Delta^*$ such that $z_{red} \eqBGpq z$. By \cref{cor:TC1} this is possible in $\uTC{1}$. 
By 	\cref{lem: BRinGpr}, for Britton-reduced words $z_{red} \in \Delta^*$ it is straightforward to check if $z_{red}\eqBGpq 1$ (simply check if $z_{red}$ is the empty word). 

Note that if $\abs{q}=1$, we have $ta^pt^{-1}=a^{\sgn{q}\cdot p }$. Thus, we do not obtain huge exponents as in \cref{ex:blowup} when applying Britton-reductions. Moreover, we do not need multiplications by powers of $r$ for any $r \geq 2$,  but we only need to calculate modulo for finding the sign and addition.  Hence, we do not need to use power circuits or we can just use power circuits to base $2$.    
\qed\end{proof}

\section{Conjugacy in $\BGpq$}

Let $u\in \Delpr$ as above. A word $v\in \Delpr$ is called a \emph{cylic permutation} of $u$ if we can write $u=xy$ and $v=yx$ for some $x,y \in \Delpr$.
A word $u\in \Delpr$ is called \emph{cyclically Britton-reduced} if all its cyclic permutations are Britton reduced (equivalently, if $u\in \BS{p}{r}$ or $uu$ is Britton-reduced). For $g,h \in \Delpr$ and $A \sse \BGpr$ we write $g\sim_{A}h$ if  they are conjugate by some element of $A$, i.e. there exists some $z \in A$ with $g \eqBGpr z^{-1} h z$.

\begin{lemma}[\normalfont Collins’ Lemma for \BGpr\bfseries \ {\cite[Theorem IV.2.5]{LS01}}] \label{lem:collins} 
	Let 
	\begin{align}\label{UVConj}
		u =  \beta_{1} \gamma_1 \cdots \beta_{h}\gamma_h,  \qquad v = \beta'_{1} \gamma'_1 \cdots  \beta'_{\ell}\gamma'_\ell 
	\end{align}
	be cyclically Britton-reduced words in $\BGpr$ with $h,\ell \geq 1$, $\beta_i,  \beta'_i\in \{b,b^{-1}\}$ and $ \gamma_i , \gamma'_i \in \BS{p}{r}$. Then $u\simBG v$ if and only if $h= \ell $ and there is a cyclic permutation $v'$ of $v$ and $ u \sim_{\gen{t} \cup \gen{a}} v'$. In particular, $ u \sim_{\gen{a}} v'$ if $\beta_1=b^{-1}$ and $ u \sim_{\gen{t}} v'$ if $\beta_1=b$.
\end{lemma}
Note that, by \hyperref[lem:Britton]{Britton's Lemma}, $ u \sim_{\gen{t} \cup \gen{a}} v$ implies that  $ \beta_{i} = \beta'_{i}$ for all $i \in \interval{1}{\ell}$. Thus, if $u\simBG v$, then $\beta_{1} \cdots \beta_{h}$ is a cyclic permutation of $\beta'_{1} \cdots  \beta'_{\ell}$. This holds for arbitrary $p$ and $r$ as Collins Lemma is stated for HNN-extensions in \cite{LS01}.

\begin{lemma}[\cite{LS01}, Lemma IV.2.3]\label{lem:CondXEqY}
Let $u=\beta_1\gamma_1\cdots \beta_k \gamma_k$ and $v=\beta'_1\gamma'_1\cdots \beta'_k \gamma'_{k'}$ be Britton-reduced words in $\Delpr$ as in (\ref{UVConj}) such that $k\geq 1$ and $u=_{\BGpr}v$. Then:
\begin{align*} 
&k=k',~ (\beta_1, \ldots, \beta_k)=(\beta'_1, \ldots, \beta'_k) \text{ and  } \\ &\nonumber \gamma'_1 \inBGpr  \begin{cases}
\gamma_1 \gen{t} & \text{ if } \beta_2=b\\
\gamma_1 \gen{a} &\text{ if }  \beta_2=b^{-1}.
\end{cases}
\end{align*} 
\end{lemma}

From now on, we again consider the group $\BGpq$ (i.e. $p$ divides $r$). The following proposition is our crucial tool to decide, together with \cref{lem:collins}, if two cyclically Britton-reduced words are conjugate.   The proof  follows the idea of the proof of \cite[Theorem 3]{DiekertMW2016SI}. 

\begin{proposition}\label{prop: CondForConj}
Let $u =  b^{-1} \gamma_1 \beta_{2} \cdots\beta_{n}\gamma_n$, $v =b^{-1}  \gamma'_1   \beta_{2} \cdots  \beta_{n} \gamma'_n $ be cyclically Britton-reduced words in $\Delpr$ as in (\ref{UVConj}).  Assume that there exists $k \in \Z$ with $a^kua^{-k}\eqBGpq v$.  Then we are in one of the following cases: 
\begin{enumerate}[(i)]
\item $n=1$ and $k=\hght{\gamma'_1}-\hght{\gamma_1}$,
\item $n \geq 2$, $\beta_2=b$ and $k=\log_{q}\left(\frac{\numba{\gamma_1'}}{\numba{\gamma_1}}\right)$ (in this case, $\frac{\numba{\gamma_1'}}{\numba{\gamma_1}}$ is a power of $q$),
\item $n \geq 2$,  $\beta_2=b^{-1}$  and $k=\hght{\gamma'_1}-\hght{\gamma_1}$.
\end{enumerate}
\end{proposition}
\begin{proof}
Let $k \in \Z$ be such that $a^kua^{-k}\eqBGpq v$. We consider three different cases:\\

\textbf{Case $n=1$:} In this case $u=b^{-1}\gamma_1$ and $v=b^{-1}\gamma'_1$ with $\gamma_1, \gamma'_1 \in \BS{p}{pq}$. So: 
\begin{align*}
&&a^kb^{-1}\gamma_1a^{-k}&\eqBGpq b^{-1}\gamma'_1&&\\
 &\iff& ba^kb^{-1}\gamma_1&\eqBGpq \gamma'_1a^k &&\\
&\iff& t^k\gamma_1&\eqBGpq\gamma'_1a^k&&
\end{align*} 
Since $\hght{}$ does not change under ($\BSpq$)-Britton-reductions we obtain that $\hght{t^k\gamma_1 }=\hght{\gamma_1'a^{k}}$. Thus, $k+\hght{\gamma_1 }=\hght{\gamma_1'}$. 
So, $k=\hght{\gamma'_1}-\hght{\gamma_1}$. 
\medskip

\textbf{Case $n \geq 2$ and $\beta_2=b$:} So, $u=b^{-1}\gamma_1b u'$ and $v=b^{-1}\gamma'_1b v'$ with  $u',v' \in \Delta^*$ and $\gamma_1, \gamma'_1 \in \BS{p}{pq}$. Because $u$ is Britton-reduced by assumption, $\gamma_1 \not\inBSpr \gen{t}$.
By \cref{lem:CondXEqY}, whenever $w \in \Delta^*$ with $w \eqBGpq a^kua^{-k}$ is Britton-reduced and does not start with a power of $a$, then there exists $m \in \mathbb{Z}$ and $w' \in \Delta^*$ such that $w= b^{-1}\ [t^k\gamma_1 t^m] bw'$. 
Thus, it follows that $\gamma'_1 \eqBSpr t^k \gamma_1 t^m$ for some $m \in \mathbb{Z}$.
 By \cref{lem: ConcOfwv},
\begin{align} \label{Gamma1DivQ}
\numba{\gamma'_1}=\numba{t^k \gamma_1 t^m}=\numba{t^k}+q^{k}\cdot \numba{\gamma_1 t^m} = q^k \cdot \numba{\gamma_1}.
\end{align}
So $q^{k}=\frac{\numba{\gamma'_1}}{\numba{\gamma_1}} $. Thus, $k=\log_{q}\left(\frac{\numba{\gamma'_1}}{\numba{\gamma_1}} \right)$. 

\medskip

\textbf{Case $n \geq 2$ and $\beta_2=b^{-1}$: }
 So, $u=b^{-1}\gamma_1b^{-1} u'$ and $v=b^{-1}\gamma'_1b^{-1} v'$ with $u',v' \in \Delta^*$.
 By \cref{lem:CondXEqY}, if $w \in  \Delta^*$ with $w \eqBGpq a^kua^{-k}$ is Britton-reduced and does not start with a power of $a$, then there exists $m \in \mathbb{Z}$ and $w' \in \Delta^*$ such that $w= b^{-1}[t^k\gamma_1 a^m] b^{-1}w'$.

Therefore, there exists $m \in \mathbb{Z}$ such that $\gamma'_1\eqBSpr t^k\gamma_1a^m$. Thus, $\hght{\gamma'_1}=k+\hght{\gamma_1}$ and $k=\hght{\gamma'_1}-\hght{\gamma_1}$.\qed
 
\end{proof}

\begin{lemma}\label{lem:cyclicallyReduce}
	There exists a constant $c$ such that the following is in  $\uTC{0}$: 
	\compproblem{A \redpc $\cZ$   of a Britton-reduced word $z \in \Delpr$  over $(\Gamma, \delta)$. }{
		A \redpc  $\cZ'$ of a cyclically Britton-reduced word $z_{cyc}$ over  $(\Gamma', \delta')$ with   $z_{cyc}\sim_{\BGpq} z$ such that $(\Gamma, \delta) \leq (\Gamma', \delta')$ and $\abs{\Gamma'} \leq \abs{\Gamma}+c\cdot \len{\Theta}{\mathcal{Z}} \cdot \log(\len{\Theta}{\mathcal{Z}}) \cdot \abs{\Gamma}$.
 	} 
\end{lemma}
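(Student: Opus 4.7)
The plan is to reduce cyclic Britton-reduction to a single application of \cref{lem:Bredstep}. If $w \in \BSq$ (equivalently $\absbeta{w}=0$), then $w$ is already cyclically Britton-reduced and I return $\cW$ unchanged. Otherwise, I write $w$ in the form of \cref{betaFact} and split it at the midpoint $j=\lfloor\ell/2\rfloor$ as $w=w'w''$ with $w'=(r_0,m_0)\beta_1\cdots\beta_j(r_j,m_j)$ and $w''=\beta_{j+1}(r_{j+1},m_{j+1})\cdots\beta_\ell(r_\ell,m_\ell)$, padding $w''$ on the left with the identity $(0,0)$ so it begins with an $\BSq$ element. Both halves are Britton-reduced as subwords of a Britton-reduced word, so I can split $\cW$ analogously into \redpc{}s $\cV$ of $w'$ and $\cU$ of $w''$ with $\msum{\cU}+\msum{\cV}=\msum{\cW}$. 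Applying \cref{lem:Bredstep} with $\parallelBound=1$ to the pair $(\cU,\cV)$ yields a \redpc of a Britton-reduced word $w_{red}=_{\BGq}w''w'=(w')^{-1}w\,w'\sim_{\BGq}w$.

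The key claim is that $w_{red}$ is cyclically Britton-reduced. The argument rests on two observations. First, the seam of $w''w'$ (between $(r_\ell,m_\ell)$ of $w''$ and $(r_0,m_0)$ of $w'$, flanked by $\beta_\ell$ and $\beta_1$) is exactly the wrap-around seam of $w$ viewed cyclically~-- the only position where $ww$ could fail to be Britton-reduced, since $w$ itself is Britton-reduced. Second, \cref{lem:Bred_conditions} together with \cref{lem: calcBR} handle arbitrarily deeply nested cancellations in a single Britton reduction, and the midpoint split guarantees that each half has at least $\lfloor\ell/2\rfloor$ $b$-letters, which bounds the maximal depth of cyclic reduction of $w$. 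Hence the single application of \cref{lem:Bredstep} exhausts all possible cyclic cancellations. After the reduction, $w_{red}$ begins with the head of $w''$ and ends with the tail of $w'$, so the seam of $w_{red}w_{red}$ is the pattern $\beta_j(r_j,m_j)\beta_{j+1}$, which is an interior sub-pattern of the Britton-reduced word $w$ and therefore not itself Britton-reducible. This shows $w_{red}$ is cyclically Britton-reduced.

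The size bounds are inherited directly from \cref{lem:Bredstep}: with $n\leq\abs{w}$, $\abs{C_\Gamma}\leq\abs{\Gamma}$, and $\cS=\msum{\cU}+\msum{\cV}=\msum{\cW}\leq 2\abs{w}\abs{\Gamma}$, we get $\abs{\Gamma'}\leq\abs{\Gamma}+c\log(\abs{w})(\abs{C_\Gamma}+\cS)\leq\abs{\Gamma}+c'\abs{w}\log(\abs{w})\abs{\Gamma}$ for a suitable constant $c'$, matching the claimed bound. The main obstacle I foresee is justifying the cyclic-reducedness argument cleanly: one has to be careful that doing the Britton reduction at a single chosen seam really is equivalent to iterating cyclic reduction to exhaustion, and this ultimately rests on the fact that any potential cyclic reduction of a Britton-reduced word is forced to occur at the wrap-around seam (because internal sub-patterns are already reduced), combined with \cref{lem: calcBR} telling us that a chain of nested cancellations at that seam is handled by a single invocation of the Britton-reduction procedure.
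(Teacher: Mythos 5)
Your proposal takes essentially the same route as the paper: cut $w$ at the $\beta$-midpoint, form the cyclic permutation $w''w'$, Britton-reduce it with a single application of \cref{lem:Bredstep}, and read off the size bound from that lemma; the paper simply cites \cite[Lemma 25]{Weiss16} for the fact that this yields a cyclically Britton-reduced word, where you argue it inline. One caveat on your inline argument: the claim that the wrap-around seam of $w_{red}$ is always the interior factor $\beta_j(r_j,m_j)(s_0,n_0)\beta_{j+1}$ of $w$ fails in the edge case where the reduction consumes \emph{all} stable letters of the shorter half, but the balanced cut ensures that then at most one stable letter survives (and a factor $\beta\, x\, \beta$ with equal signs is never a pinch), so the conclusion still holds.
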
 
Using \cref{thm:TC1PC} and \cref{lem:cyclicallyReduce}, on  input of a \redpc $\cZ$ of an arbitrary  $z\in \Delpr$ we can compute a cyclically Britton-reduced word $z_{cyc}$ such that $z_{cyc} \sim_{\BGpq} z$ in $\uTC{1}$.
\begin{proof}
	It is a standard fact from group theory (see \eg \cite[Lemma 25]{Weiss16}), that we can compute $z_{cyc}$ by performing one cyclic permutation to $z$ (cutting right through the middle) and applying Britton reductions. Thus, we write $z= uv$ with $0\leq \absbeta{u} - \absbeta{v}\leq 1$ and apply \cref{lem:Bredstep} to the corresponding \redpc Thus, the lemma follows.
\qed\end{proof}

\begin{theorem}\label{thm:CPgeneric}
	The following is in $\uTC{1} $: 
\vspace*{-.1mm}\compproblem{ \Redpc's for words $u,v \in \Delpr$. }{Is $u$ conjugate to some element in $\BS{p}{pq}$? If no, is $u\sim_{\BGpq} v$?
} 
\noindent In particular, the conjugacy problem for $\BGpq$ is strongly generically in $\uTC{1} $.	
\end{theorem}

\begin{proof}
	By  \cref{cor:TC1} and \cref{lem:cyclicallyReduce}  we can compute \redpc's  of cyclically Britton-reduced words  $\tilde u$ and $\tilde v$ in a power circuit $(\Gamma', \delta')$ with 
    for a suitable constant $c$ as follows: 
	We first construct \redpcs
	for Britton-reduced $\hat u, \hat v$ with $\hat u \eqBGpq u$ and $\hat v \eqBGpq v$ in $\uTC{1}$ using \cref{cor:TC1}. Then, by \cref{lem:cyclicallyReduce} we can Britton-reduce them cyclically yielding \redpcs of cyclically Britton-reduced words  $\tilde u$ and $\tilde v$ with $\tilde u \eqBGpq u$ and $\tilde v \eqBGpq v$. Now, by Collins' Lemma, $u$ is conjugate to some element of $\BS{p}{pq}$ if and only if $\tilde u $ is a single letter from $\BS{p}{pq}$~-- which can be easily tested in $\uTC{0}$.

	Thus, in the following,  we can assume that $u$ cannot be conjugated into \BS{p}{pq}. Moreover we assume that $u$ and $v$ are cyclically Britton-reduced words.	After possibly replacing them by $u^{-1}$ and $v^{-1}$  or a cyclic permutation of either one, we may assume that the first letters of $u$ and $v$ are both $b^{-1}$ (otherwise, they are certainly not conjugate). Concerning the \redpcs we just need to change the indices appropriately and, if replacing by the inverse, change the sign of the evaluations of the respective markings.  Thus, we can assume that we have $u$ and $v$ as in \cref{prop: CondForConj}. 
	
	We calculate a marking evaluating to $k$  in the respective case of this proposition. According to the size constraints in  \cref{cor:TC1} and \cref{lem:cyclicallyReduce}  the \redpcs of $u,v$ are  in a power circuit of size polynomial in the input, so this is possible in $\uTC{0}$ using \addition and \cref{lem: log}.  Then we check if $a^kua^{-k}\eqBGpq v$ for all cyclic permutations of $v$ in parallel. For this we just need one instance of the word problem of $\BGpq$ which we showed to be in $\uTC{1} $ (\cref{thm:TC1PC}). By  \cref{lem:collins} (Collins Lemma) this suffices to check if $u \sim_{\BGpq} v$. 
\medskip

By the proof of \cite[Theorem 5]{DiekertMW2016SI} (see also \cite[Example 2]{DiekertMW17}) the set of words $u\in \Delpr$ representing elements of $\BGpq$ that cannot be conjugated into $\BS{p}{pq}$ is strongly generic.  Hence, the second part of the theorem follows.
\qed\end{proof}


\subsection{Conjugating by a fixed element in $\BGq$}\label{sec: ConjFixedEl}

In this section, we assume that $p=1$. We  use the notation of $\BS{1}{q}$ as a semi-direct product we introduced in \cref{sec:BSgroups}. We start with the following Lemma:

\begin{lemma}[\!\,{\cite[Equation (5), Proposition 5 and 6]{DiekertMW2016SI}}]\label{prop:nixH}
	\begin{enumerate}[(a)]
		\item Let $m \geq 1$. Then $(r,m)\sim_{\BSq} (s,n)$ if and only if $m=n$ and there is some $k \in \interval{0}{m-1}$ such that $r\cdot \pcBase^k \equiv s \mod (\pcBase^m - 1)$.
		\item 	Let $r,m \in \Z$, $m \neq 0$. If $(r,m)\not\sim_{\BSq} (0,m)$, then $(r,m)\sim_{\BGq} (s,n) $ if and only if $(r,m)\sim_{\BSq} (s,n).$
		\item Let $m,n \in \Z$. Then 
		$(0,m)\sim_{\BGq} (0,n)$ if and only if $(m,0)\sim_{\BSq} (n,0)$ if and only if  $\exists k \in \Z: m = \pcBase^k n.$
	\end{enumerate}
\end{lemma}

\begin{proposition}\label{prop: ResultConj}
		For every fixed $g \in \BGq$ the following is in $\uTC{1}$: 
		\ynproblem{A word $w \in \Delta^*$.}{Is $g \sim_{\BGq} w$?} 
\end{proposition}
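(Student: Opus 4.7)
My plan is to compute a cyclically Britton-reduced representative of $w$ in \Tc1 and then perform a case analysis based on the fixed $\tilde g$, the cyclically Britton-reduced form of $g$. By applying \cref{cor:TC1} followed by \cref{lem:cyclicallyReduce} I obtain, in \Tc1, a \redpc of a cyclically Britton-reduced word $\tilde w$ with $\tilde w \eqBG w$. The form $\tilde g$ is a constant that is hard-wired into the circuit.

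If $\tilde g$ has $\beta$-length $\ell \geq 1$, I first check that $\tilde w$ has $\beta$-length $\ell$ and that its $\beta$-sequence matches $\tilde g$'s up to cyclic permutation; for each of the at most $\ell$ candidate cyclic rotations in parallel I apply Collins' Lemma (\cref{lem:collins}) via the three cases (I)--(III) from the proof of \cref{cor:CPgeneric}. Each case reduces the remaining test either to a single instance of the word problem with \redpc input (in \Tc1 by \cref{thm:TC1PC}) or to a floating-point identity (in \Tc1 by \cref{cor: OpOnFP}).

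If $\tilde g = (r_g, m_g) \in \BSq$, I insist $\tilde w$ have $\beta$-length zero, write $\tilde w = (r_w, m_w)$, and precompute the fixed boolean ``$\tilde g \sim_{\BSq} (0, m_g)$''. If $m_g \neq 0$ and $\tilde g \not\sim_{\BSq} (0, m_g)$, then by \cref{prop:nixH}(2) the question reduces to $\tilde w \sim_{\BSq} \tilde g$, which by \cref{prop:nixH}(1) is the constant check $m_w = m_g$ together with $r_w\, q^k \equiv r_g \pmod{q^{m_g}-1}$ for some $k \in \interval{0}{m_g - 1}$; since $q^{m_g}-1$ and $m_g$ are constants, this is in \Tc1 by \cref{prop:modulo}. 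Otherwise $\tilde g$ is $\BGq$-conjugate to a specific $t$-power $t^{m'}$ (where $m' = 0$ handles $g = 1$), and I must decide $\tilde w \simG t^{m'}$. If $m_w = 0$, then using $a^{r_w} \simG t^{r_w}$ and \cref{prop:nixH}(3) the answer is yes iff $r_w \in q^{\Z} m'$; writing $m' = c\, q^{e}$ with $c$ a fixed integer not divisible by $q$, the normalized mantissa must satisfy $\eps(U_{r_w}) = c$, a marking equality in \Ac{0} by \cref{lem:compareCompactMarkings}. If $m_w \neq 0$, a combination of \cref{prop:nixH}(2) and (3) shows the answer is yes exactly when $m_w \in q^{\Z} m'$ (same mantissa test applied to $m_w$) \emph{and} $(r_w, m_w) \sim_{\BSq} (0, m_w)$, which by \cref{prop:nixH}(1) amounts to $(q^{m_w} - 1) \mid \eps(U_{r_w})$.

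The only non-routine step is this last divisibility check when $m_w$ is not a constant; this is the main obstacle and I handle it entirely inside the power circuit. Using $q^{m_w} \equiv 1 \pmod{q^{m_w}-1}$ we have
\[
\eps(U_{r_w}) \;\equiv\; T \;:=\; \sum_{P \in \supp(U_{r_w})} U_{r_w}(P)\, q^{\mu_P} \pmod{q^{m_w}-1},
\]
where $\mu_P = \eps(\Lambda_P) \bmod m_w$. In this subcase $m_w = c\cdot q^{\eps(E_{m_w})}$ with $c$ a fixed constant, so each $\mu_P$ is realized as a marking by \cref{lem:moduloPowerOfTwoConstant}. Creating a new node with successor marking $\mu_P$ yields a marking whose evaluation is $q^{\mu_P}$, and \addition{} gives a marking for $T$ with $\abs{T} \leq (q-1)\abs{\Gamma}\, q^{m_w - 1}$. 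Applying \cref{lem:moduloPowerOfTwoConstant} again (with constant $r = 1$) produces $R := T \bmod q^{m_w}$, and \cref{cor: OpOnFP} gives $T' := (T - R)/q^{m_w}$, which satisfies $\abs{T'} \leq (q-1)\abs{\Gamma}$. Since $T \equiv T' + R \pmod{q^{m_w} - 1}$ and $\abs{T' + R} < 2(q^{m_w} - 1)$, the divisibility holds iff $T' + R \in \oneset{0,\, \pm(q^{m_w} - 1)}$; a marking for $q^{m_w} - 1$ is obtained from a node with successor marking $m_w$ via \cref{lem: add}, and the three resulting marking comparisons are in \Ac{0} by \cref{lem:compareCompactMarkings}. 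The whole algorithm consists of a constant number of \Tc1 rounds and therefore lies in \Tc1.
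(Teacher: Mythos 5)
Your route is essentially the paper's: reduce via \cref{cor:CPgeneric} and Collins' Lemma to the case $g\in\BSq$, normalize mantissas, split according to \cref{prop:nixH}, and isolate as the only hard subcase the divisibility test $(\pcBase^{m_w}-1)\mid \eps(U_{r_w})$ for non-constant $m_w\in \pcBase^{\Z}m'$, which you attack exactly as the paper does, by folding the exponents $\eps(\Lambda_P)$ modulo $m_w$ with \cref{lem:moduloPowerOfTwoConstant}. The gap is in the recombination step you add at the end. Writing $T=\sum_{P}U_{r_w}(P)\,\pcBase^{\mu_P}$ and $T=\pcBase^{m_w}T'+R$, you claim $\abs{T'+R}<2(\pcBase^{m_w}-1)$ and conclude that divisibility is equivalent to $T'+R\in\oneset{0,\pm(\pcBase^{m_w}-1)}$. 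But by your own bounds $\abs{T'}$ can be of order $(\pcBase-1)\abs{\Gamma}$ while $R$ can be as large as $\pcBase^{m_w}-1$, so the claimed inequality only holds when $\pcBase^{m_w}-1>(\pcBase-1)\abs{\Gamma}$, i.e.\ when $m_w$ exceeds roughly $\log_{\pcBase}\abs{\Gamma}$~-- and nothing in your case analysis excludes small $m_w$. Concretely, for $\pcBase=3$, $m_w=1$ and a compact mantissa with four nodes of coefficient $2$ whose successor markings have pairwise distance at least $2$ (e.g.\ $\eps(U_{r_w})=2(1+9+81+729)=1640$), one gets $T=8$, $R=2$, $T'=2$, $T'+R=4\notin\oneset{0,\pm 2}$, although $\pcBase^{m_w}-1=2$ does divide $1640$; your circuit would answer ``no'' on an instance (such as deciding $a^{1640}t\sim_{\BGq}t$) where the correct answer is ``yes''.

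The defect is local and repairable: either iterate the split $T\mapsto T'+R$, which shrinks the value by a factor of at least $\pcBase$ per round and hence stabilizes after $\Oh(\log\abs{\Gamma})$ \TC-rounds, still within \Tc1; or branch on whether $m_w\leq \log_{\pcBase}((\pcBase-1)\abs{\Gamma})+1$, in which case $\pcBase^{m_w}-1$ has polynomially many bits and $\sum_P U_{r_w}(P)\pcBase^{\mu_P}\bmod(\pcBase^{m_w}-1)$ can be evaluated directly by ordinary iterated addition and division in \TC. (You also gloss over, as the paper largely does, replacing elements by their inverses so that \cref{prop:nixH} applies with positive exponent, reducing the floating-point exponent $\pcBase^{\eps(E)}$ modulo the constant modulus in the $m_g\neq 0$ case, and the sign bookkeeping needed for negative $\pcBase$; these are routine.)
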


\begin{proof}
	By the semi-direct product representation introduced in  \cref{sec:BSgroups}, we only need to consider the case that $g = t^ka^rt^m \in \BSq$. By conjugating with $t^{-k}$, we may assume that we can write $g=(r,m)$ with $r \in \Z$ and $q$ does not divide $r$. 
	Moreover, using \iffull \cref{cor:TC1} \else \cref{thm:TC1PC} \fi and \cref{lem:cyclicallyReduce} we can (cyclically) Britton-reduce the input word $w$ leading to a \redpc for $w =_{\BGq} t^\ell a^st^n \in \BSq$ (if the cyclic reduction of $w$ is not in $\BSq$, we know that $w$ is not conjugate to $g$). By \cref{thm:TC1PC} this is possible in $\uTC{1}$. Again by conjugating with  $ t^{-\ell}$, we may assume that we can write $w=_{\BGq} (s,n)$ with $s \in \Z$ and $q$ does not divide $s$. In the following, we use the representation of $\BSq$  as semi-direct product.

	Now, first assume that $m \neq 0 $ and $(r,m)\not\sim_{\BSq} (0,m)$. By replacing $g$ and $(s,n)$ by their inverses if necessary, we may assume $m\geq 1$. Hence, by \cref{prop:nixH}, we have $(r,m)\sim_{\BGq} (s,n) $ if and only if $(r,m)\sim_{\BSq} (s,n)$.  By \cref{prop:nixH}, we know that the latter holds if and only if $m=n$ and there is some $k \in \interval{0}{m-1}$ such that $r\cdot \pcBase^k \equiv s \mod (\pcBase^m - 1)$. Now, as $(\pcBase^m - 1)$ is a constant, we can compute $r \cdot \pcBase^k \bmod (\pcBase^m - 1)$ and $s \bmod (\pcBase^m - 1)$ by \cref{lem:moduloPowerOfTwoConstant}. As there are only a constant number of possibilities for $k$ we can check for all of them whether $r\cdot \pcBase^k \equiv s \mod (\pcBase^m - 1)$.

	If	$(r,m)\sim_{\BGq} (0,m)$, we also know that $(r,m) \sim_{\BGq} (m,0)$ (as $b^{-1}(0,m)b =_{\BGq} (m,0)$). Thus, it only remains to consider the case $g= (r,0)$ with $r \in \Z$ (\ie $m=0$). 
	In this case \cref{prop:nixH} tells us that $(s,0)\sim_{\BGq}  (r,0) \iff \exists k \in \Z: s = \pcBase^k r.$ We can check this condition using \cref{lem: floatpOp}. 
	
	Thus, it remains to test whether either $n=0$ or  $(s,n)\sim_{\BGq} (0,n)$ (as otherwise, it is straightforward to see that $(s,n)\not\sim_{\BGq} (r,0)$ for any $r$).
	Checking whether $n=0$ can be done by \cref{lem:compareCompactMarkings}. For the latter test we proceed as follows: 
	
	After possibly replacing $(s,n)$ by $(s,n)^{-1}$ and $(r,m)$ by $(r,m)^{-1}$, we can assume that $n >0$.	
	Now, because of \cite[Proposition 6]{DiekertMW2016SI} we know that, if $n\neq 0 $ and $(s,n)\sim_{\BGq} (r,0)$, then $n = \pcBase^k r$ for some $k \in \Z$ (be aware that compared to the above notation we flipped the coordinates $s$ and $n$); moreover, we can compute this $k$ if it exists (\cref{lem: powtwo}). 
	
	By our assumption $n = \pcBase^k r > 0$, we can use \cite[Equation (5)]{DiekertMW2016SI} stating that $(s,\pcBase^k r)\sim_{\BGq} (0,\pcBase^k r)$ if and only if there is some $x \in \interval{0}{\pcBase^k r-1}$ with $s \cdot \pcBase^x \equiv 0 \mod \pcBase^{\pcBase^k r} - 1$. 
	Recall that  $s \in \Z$ and $\pcBase$ does not divide $s$. As also $\pcBase$ does not divide $\pcBase^{\pcBase^k r} - 1\in \Z$, the only way for satisfying $s \cdot \pcBase^x \equiv 0 \mod \pcBase^{\pcBase^k r} - 1$ is if it is already satisfied for $x=0$. 
	Thus, we need to test whether $s \equiv 0 \mod \pcBase^{\pcBase^k r} - 1$. 
	As $s$ is given as a compact marking $U$ with $\eps(U) = s$, it is sufficient to compute $\eps(P) \bmod  \pcBase^{\pcBase^k r} - 1$ for all $P \in \supp(U)$.
	
	Now, observe that $\pcBase^{\ell }\bmod  \pcBase^{\pcBase^k r} - 1 = \pcBase^{\ell \bmod  \pcBase^k r} $; thus, it suffices to compute $\eps(L) \bmod  \pcBase^k r $ for some arbitrary marking $L$ on our power circuit. This can be done by \cref{lem:moduloPowerOfTwoConstant}.
	\qed
\end{proof}

Now, Theorem~\ref{thm:main_conjugacy} follows from \cref{thm:CPgeneric} and \cref{prop: ResultConj}. As for Theorem~\ref{thm:TC1PC} we do not need power circuits if $\abs{q}=1$ or just can use power circuits to base $2$.

\vspace*{-1mm}\enlargethispage{\baselineskip}
\section*{Conclusion and open questions}

We have shown that the word problem of the  Baumslag groups $\BGpq$ is in $\uTC{1}$ for $\abs{p},\abs{q} \geq 1$. The same complexity applies to the conjugacy problem for elements outside $\BS{p}{pq}$. For a fixed element $g \in \BGq$ we could show that the conjugacy problem is in $\uTC{1}$. For these problems, $\uTC{1}$ seems to be the best possible using the current approach of tree-like Britton reductions. 
Moreover, we show that the power word problem is in $\uAcf{0}$ for the groups $\BS{p}{pq}$ with $\abs{p},\abs{q} \geq 1$ in the special case that the input is a word over powers of the generators. Thus we give a partial answer to the question raised in \cite{LohreyZetsche23} about the complexity of the power word problem in the groups $\BS{p}{r}$.

\subsection*{Open Questions:}
\begin{itemize}
	\item  What is the ``actual'' complexity of the word problem of \BGpq? Are there any better lower bounds other than that it contains a non-abelian free subgroup?
	\item
	What is the complexity of the word problem of $\BGpr$ if $p \nmid r$? For these groups, our approach does not work (see \cref{rem:BSTwoThree}) and it is completely unclear whether the problem can be solved even in polynomial time. 
	\item Can our methods be generalized to the Higman group $H_4$? This is closely related to the growth of its Dehn function, which, to the best of our knowledge, is not known to be in $\tow(\Oh(\log n))$ like for the Baumslag group.
\end{itemize}
%
%


\newcommand{\Ju}{Ju}\newcommand{\Ph}{Ph}\newcommand{\Th}{Th}\newcommand{\Ch}{Ch}\newcommand{\Yu}{Yu}\newcommand{\Zh}{Zh}\newcommand{\St}{St}\newcommand{\curlybraces}[1]{\{#1\}}

\end{document}